\newcommand{\R}{\hbox{\rm I \kern -5pt R}}     
\newcommand{\p} {\hbox{\rm I \kern -5pt P}}
\def\x  {\boldsymbol x}
\def\H        {{\boldsymbol H}}
\newtheorem{prop}{Proposition}[section]
\newtheorem{defi}[prop]{Definition}
\newtheorem{tma}[prop]{Theorem}
\newtheorem{cor}[prop]{Corollary}
\newtheorem{obs}[prop]{Remark}
\newtheorem{lem}[prop]{Lemma}
\begin{document}

\title{Comparison of two finite element schemes for a chemo-repulsion system with quadratic production}
\author{F.~Guill\'en-Gonz\'alez\thanks{Dpto. Ecuaciones Diferenciales y An\'alisis Num\'erico and IMUS, 
Universidad de Sevilla, Facultad de Matem\'aticas, C/ Tarfia, S/N, 41012 Sevilla (SPAIN). Email: guillen@us.es, angeles@us.es},
M.~A.~Rodr\'\i guez-Bellido$^*$~and
 D.~A.~Rueda-G\'omez$^*$\thanks{Escuela de Matem\'aticas, Universidad Industrial de Santander, A.A. 678, Bucaramanga (COLOMBIA). Email:  diaruego@uis.edu.co}}

\date{}
\maketitle

\begin{abstract}
In this paper we propose two fully discrete Finite Elements (FE) schemes for a repulsive chemotaxis model with quadratic production term. The first one (called scheme \textbf{UV}) corresponds to the backward Euler in time with FE in space approximation; while the second one (called scheme \textbf{US}$_\varepsilon$) is obtained as a modification of the scheme \textbf{US} proposed in \cite{FMD2}, by applying a regularization procedure. We prove that the schemes \textbf{UV} and \textbf{US}$_\varepsilon$ have better properties than the FE scheme \textbf{US}. Specifically, we prove that, unlike the scheme \textbf{US}, the scheme \textbf{UV} is energy-stable in the primitive variables of the model, under a ``compatibility'' condition on the FE spaces. On the other hand, the scheme \textbf{US}$_\varepsilon$ is energy-stable with respect to the same modified energy of the scheme \textbf{US}, and  an ``approximated positivity'' property holds (which is not possible to prove for the schemes \textbf{US} and \textbf{UV}). Additionally, we study the well-posedness of the schemes and the long time behaviour obtaining exponential convergence to constant states. Finally, we compare the numerical schemes throughout several numerical simulations.
\end{abstract}

\noindent{\bf 2010 Mathematics Subject Classification.} 35K51, 35Q92, 35B40, 65M60, 65M12, 92C17.

\noindent{\bf Keywords: } Chemorepulsion model, quadratic production, finite element schemes, large-time behavior, energy-stability, approximated positivity.

\section{Introduction}
The directed movement of cells in response to a chemical stimulus is known in biology as chemotaxis. More specifically, if the cells move towards regions of high chemical concentration, the motion is called chemoattraction, while if the cells move towards regions of lower chemical concentration, the motion is called chemorepulsion. Models for chemotaxis motion has been studied in literature (see \cite{Cristian,HP,Horst,KS,Lan1,Lan2,Win1,Win2} and references therein). One of the most important characteristics of chemoattractant models is that the finite blow up of solutions can happen in space dimension greater or equal to $2$; while in chemorepulsion models this phenomenon is not expected. Many works have been devoted to study in what cases and how blow up takes place (see for instance \cite{Win1b,Lin,Via,Win1c,Win1a,XIXI,Zhao}). \\

In those cases in which blow-up phenomenon does not happen, it is interesting to study the asymptotic behaviour of the solutions of the model. In fact, in \cite{Osaki}, Osaki and Yagi studied the convergence of the solution of the Keller-Segel model to a stationary solution in the one-dimensional case. In \cite{HJ}, the convergence of the solution of the Keller-Segel model with an additional  cross-diffusion term to a steady state was shown. In \cite{Cristian} the authors proved the convergence to constant state for a chemorepulsion model with linear production. Therefore, taking into account the results above, the first aim of this paper is to study the asymptotic behaviour of the following parabolic-parabolic repulsive-productive chemotaxis model (with quadratic signal production):
\begin{equation}  \label{modelf00}
\left\{
\begin{array}
[c]{lll}%
\partial_t u - \Delta u = \nabla\cdot (u\nabla v)\ \ \mbox{in}\ \Omega,\ t>0,\\
\partial_t v - \Delta v + v =  u^{2} \ \mbox{in}\ \ \Omega,\ t>0,\\
\displaystyle \frac{\partial u}{\partial \mathbf{n}}=\frac{\partial v}{\partial \mathbf{n}}=0\ \ \mbox{on}\ \partial\Omega,\ t>0,\\
u({\x},0)=u_0({\x})\geq 0,\ v({\x},0)=v_0({\x})\geq 0\ \ \mbox{in}\ \Omega,
\end{array}
\right. \end{equation}
where $\Omega$ is a $n-$dimensional open bounded domain, $n=1,2,3$, with boundary $\partial \Omega$; and the unknowns  are $u(\x, t) \geq 0$, the cell density, and $v(\x, t) \geq 0$, the chemical concentration. This model has been studied in \cite{FMD}. There, the authors shown that model (\ref{modelf00}) is well-posed: there exists global in time weak-strong solution (in the sense of Definition \ref{ws00} below) and, for $1D$ or $2D$ domains, there exists a unique global in time regular solution.\\

On the other hand, another interesting topic is the study of fully discrete FE schemes approximating model (\ref{modelf00}), conserving properties of the continuous problem such as: mass-conservation, energy-stability, positivity and long time behaviour. In fact, in \cite{FMD2} it was studied a fully discrete FE scheme for model (\ref{modelf00}), which is mass-conservative and energy-stable with respect to a modified energy given in terms of the auxiliary variable ${\boldsymbol{\sigma}}=\nabla v$. However, neither energy-stability with respect to the primitive variables $(u,v)$ (see (\ref{eneruva}) below) nor positivity (or approximated positivity) were proved. Moreover, as far as we know, there are not another works studying FE aproximations for problem (\ref{modelf00}). 
For this reason, the second aim of this paper is to present two new fully discrete FE schemes, that have better properties than the scheme proposed in \cite{FMD2}, in terms of energy-stability, positivity and asymptotic behaviour of the scheme.\\

The asymptotic behaviour of fully discrete numerical schemes has been studied in different contexts. In fact, in \cite{GS} Guill\'en-Gonz\'alez and Samsidy proved asymptotic convergence for a fully discrete FE scheme for a Ginzburg-Landau model for nematic liquid crystal flow.  In \cite{MP} Merlet and Pierre studied the asymptotic behaviour of the backward Euler scheme applied to gradient flows. It is important to notice that, in chemotaxis models, there are few works studying large-time behaviour for fully discrete schemes. We refer to \cite{BJ}, where the authors shown conditional stability and convergence at infinite time of a finite volume scheme for a Keller-Segel model with an additional  cross-diffusion term. Meanwhile, the behavior at infinite time of a fully discrete scheme for model (\ref{modelf00}) seems to be still an open problem. \\

Likewise, the energy-stability property has been studied for fully discrete numerical schemes in the chemotaxis framework. In \cite{GRR1}, the authors studied unconditionally energy stables  FE schemes for a chemo-repulsion model with linear production. A finite volume scheme for a Keller-Segel model with an additional cross-diffusion term satisfying the energy-stablity property (conditionally) has been studied in \cite{BJ}. In \cite{FMD2}, it was studied an unconditionally energy-stable FE scheme for model (\ref{modelf00}) with respect to a modified energy written in terms of the auxiliary variable ${\boldsymbol{\sigma}}=\nabla v$. However, up our knowledge, the energy-stability in FE schemes, with respect to the $(u,v)$-energy given in (\ref{eneruva}) below, is so far an open problem.       \\

In terms of positive or approximately positive numerical schemes on chemotaxis context we refer to \cite{Saad1,Zuhr,SG1,FMD,GRR1}. In \cite{Zuhr}, the nonnegativity of numerical methods, using FE techniques, to a generalized Keller-Segel model was analyzed. A discrete maximum principle for a fully discrete numerical scheme (combining the finite volume method and the nonconforming finite element
method) approaching a chemotaxis-swimming bacteria model was obtained in \cite{Saad1}. In \cite{GRR1}, aproximated positivity of FE schemes for a chemo-repulsion model with linear production was proved. The positivity of a finite volume scheme for a parabolic-elliptic chemotaxis system was studied in \cite{SG1}. In \cite{FMD}, positivity of only time-discrete schemes associated to model (\ref{modelf00}) was proved.
However, there are not works studying positive (or approximately positive) FE schemes for model (\ref{modelf00}).
 \\

Consequently, the main novelties in this paper are the following:
\begin{itemize}
	\item The introduction of a FE scheme (see scheme \textbf{UV} in Section \ref{NEs} below) which is energy-stable with respect to the $(u,v)$-energy of the continuous problem (\ref{modelf00}) given in (\ref{eneruva}), under a ``compatibility'' condition on the FE spaces, namely taking   $(\mathbb{P}_m,\mathbb{P}_{2m})$-continuous FE (with $m\geq 1$) for $(u,v)$.
	\item The introduction of another FE scheme (see scheme \textbf{US}$_\varepsilon$ in Section \ref{schemeUSe} below) which has the ``approximated'' positivity property, and it is energy-stable (with respect to a modified $(u,{\boldsymbol{\sigma}})$-energy).
	\item The proof of the long time behaviour for the schemes previously mentioned, obtaining  exponential convergence to constant states as time goes to infinity. 
\end{itemize}

The outline of this paper is as follows: In Section \ref{CMo}, we study (formally) the asymptotic behavior of the global solutions for the model (\ref{modelf00}), and we prove the exponential convergence as time goes to infinity to constant states. In Section \ref{NEs}, we study a fully discrete scheme associated to model (\ref{modelf00}), corresponding to the nonlinear backward Euler in time and FE in space in the variables $(u,v)$. The analysis includes the well-posedness of the scheme and some properties such as $u$-conservation, energy stability, convergence and long time behaviour. In Section \ref{schemeUSe}, we propose another fully discrete FE approximation of model (\ref{modelf00}), which is obtained combining the scheme \textbf{US} proposed in \cite{FMD2} with a regularization technique. For this scheme, we can prove, in addition to the properties proved for the previous scheme, the approximated positivity. In Section \ref{NSi}, we compare the numerical schemes throughout several numerical simulations, giving the main conclusions in Section \ref{Con}.

\subsection{Notation}
We recall some functional spaces which will be used throughout this paper. We will consider the usual Sobolev spaces $H^m(\Omega)$ and Lebesgue spaces $L^p(\Omega),$
$1\leq p\leq \infty,$ with norms $\Vert\cdot\Vert_{m}$ and $\Vert\cdot \Vert_{L^p}$, respectively. In particular,  the $L^2(\Omega)$-norm will be denoted by $\Vert
\cdot\Vert_0$. We denote by $\H^{1}_{\sigma}(\Omega):=\{\mathbf{u}\in \H^{1}(\Omega): \mathbf{u}\cdot \mathbf{n}=0 \mbox{ on } \partial\Omega\}$ and we will use the following equivalent norms in $H^1(\Omega)$  and ${\bf H}_{\sigma}^1(\Omega)$, respectively (see \cite{necas} and \cite[Corollary 3.5]{Nour}, respectively):
\begin{equation}
\Vert u \Vert_{1}^2=\Vert \nabla u\Vert_{0}^2 + \left( \int_\Omega u\right)^2, \ \ \forall u\in H^1(\Omega),
\end{equation}
\begin{equation}\label{H1div}
\Vert {\boldsymbol\sigma} \Vert_{1}^2=\Vert {\boldsymbol\sigma}\Vert_{0}^2 + \Vert \mbox{rot }{\boldsymbol\sigma}\Vert_0^2 + \Vert \nabla \cdot {\boldsymbol\sigma}\Vert_0^2, \ \ \forall {\boldsymbol\sigma}\in \H^{1}_{\sigma}(\Omega).
\end{equation}
In particular, (\ref{H1div}) implies that
\begin{equation*}
\Vert\nabla v\Vert_{1}^2=\Vert \nabla v\Vert_{0}^2  + \Vert \Delta v\Vert_0^2, \ \ \forall v:\nabla v\in \H^{1}_{\sigma}(\Omega).
\end{equation*}
If $Z$ is a
general Banach space, its topological dual will be denoted by $Z'$.
Moreover,the letters  $C,C_i,K_i$ will denote different positive constants depending on the data $(\Omega,u_0, v_0)$, 
but independent of the discrete parameters $(k, h)$ and time step $n$, which may change from line to line (or even within the same line). 

\section{Continuous problem}\label{CMo}
In this section some fundamental concepts associated to problem (\ref{modelf00}) are presented, including the definition of weak-strong solutions and some qualitative properties such as $u$-conservation, positivity and large time behaviour. In particular, exponential convergence to constant states as time goes to infinity is obtained.

\subsection{Some properties}
Problem (\ref{modelf00}) conserves in time the total mass $\int_\Omega u$. In fact, defining 
\begin{equation}\label{m0}
	m_0=\frac1{|\Omega|} \int_{\Omega} u_0,
	\end{equation}
 and  integrating (\ref{modelf00})$_1$ in $\Omega$,
\begin{equation*}
\frac{d}{dt}\left(\int_\Omega u\right)=0, \ \ \mbox{ i.e. } \ 
\int_\Omega u(t)=\int_\Omega u_0:= m_0\vert \Omega\vert, \ \ \forall t>0.
\end{equation*}
Now, the definition of weak-strong solutions for problem (\ref{modelf00}) is presented.
\begin{defi} \label{ws00}{\bf (Weak-strong solutions of (\ref{modelf00}))} 
	Given $(u_0, v_0)\in L^2(\Omega)\times H^1(\Omega)$ with $u_0\geq 0$, $v_0\geq 0$ a.e.~$\x\in \Omega$.	A pair $(u,v)$ is called weak-strong solution of problem (\ref{modelf00}) in $(0,+\infty)$, if $u\geq 0$, $v\geq 0$ a.e.~$(t,\x)\in (0,+\infty)\times \Omega$,
	\begin{equation}\label{wsa}
	(u-m_0,v-m_0^2) \in L^{\infty}(0,+\infty;L^2(\Omega)\times H^1(\Omega)) 
	\cap L^{2}(0,+\infty;H^1(\Omega)\times H^2(\Omega)),  
	\end{equation}
	\begin{equation}\label{wsa-bis}
	(\partial_t u, \partial_t v) \in L^{q'}(0,T;H^1(\Omega)' \times L^2(\Omega)), \ \ \forall T>0,
	\end{equation}
	where $q'=2$ in the $2$-dimensional case $(2D)$ and $q'=4/3$ in the $3$-dimensional case $(3D)$ ($q'$ is the conjugate exponent of $q=2$ in $2D$ and $q=4$ in $3D$); the following variational formulation holds
	\begin{equation}\label{wf01}
	\int_0^T \langle \partial_t u,\overline{u}\rangle + \int_0^T (\nabla u,  \nabla \overline{u}) +\int_0^T (u\nabla v,\nabla \overline{u})=0, \ \ \forall \overline{u}\in L^q(0,T;H^{1}(\Omega)), \ \ \forall T>0,
	\end{equation}
	the following equation holds pointwisely
	\begin{equation}\label{wf02}
	\partial_t v +A v=u^2 \ \ \mbox{ a.e. } (t,\x)\in (0,+\infty)\times\Omega,
	\end{equation}
	the initial conditions $(\ref{modelf00})_4$ are satisfied and the following energy inequality (in integral version) holds  a.e.~$t_0,t_1$ with $t_1\geq t_0\geq 0$:
	\begin{equation}\label{wsd}
	\mathcal{E}(u(t_1),v(t_1)) - \mathcal{E}(u(t_0),v(t_0))
	+ \int_{t_0}^{t_1} \left(\Vert \nabla u(s)  \Vert_{0}^2 
	+\frac{1}{2} \Vert \nabla v(s) \Vert_{1}^2
	\right)\ ds \leq0,
	\end{equation}
	where
	\begin{equation}\label{eneruva}
	\mathcal{E}(u,v)=\displaystyle
	\frac{1}{2}\Vert u\Vert_{0}^2 + \frac{1}{4}\Vert \nabla v\Vert_{0}^{2}.
	\end{equation}
\end{defi}
\begin{obs}
In particular, the energy inequality (\ref{wsd}) is valid for $t_0=0$. Moreover, (\ref{wsd}) shows the dissipative character of the model with respect to the total energy $\mathcal{E}(u(t), v(t))$. 
\end{obs}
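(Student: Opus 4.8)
The statement contains two assertions, and the second one is immediate. Since the integrand appearing in (\ref{wsd}),
\[
\Vert \nabla u(s)\Vert_0^2 + \tfrac12 \Vert \nabla v(s)\Vert_1^2 ,
\]
is nonnegative, dropping it from (\ref{wsd}) gives $\mathcal{E}(u(t_1),v(t_1)) \le \mathcal{E}(u(t_0),v(t_0))$ whenever $t_1 \ge t_0 \ge 0$; hence $t\mapsto \mathcal{E}(u(t),v(t))$ is non-increasing, which is exactly the claimed dissipativity. So the only genuine point is to justify that (\ref{wsd}) persists with $t_0=0$, the value $\mathcal{E}(u_0,v_0)$ being read off from the initial data $(\ref{modelf00})_4$.

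My plan is to pass to the limit $t_0\to 0^+$ in the a.e.\ version of (\ref{wsd}), which reduces everything to the strong convergence $\mathcal{E}(u(t_0),v(t_0)) \to \mathcal{E}(u_0,v_0)$. First I would upgrade the time regularity of $(u,v)$ to continuity at the origin. From (\ref{wsa}) we have $u-m_0 \in L^2(0,T;H^1(\Omega))\cap L^\infty(0,T;L^2(\Omega))$ and $v-m_0^2 \in L^2(0,T;H^2(\Omega))\cap L^\infty(0,T;H^1(\Omega))$, while (\ref{wsa-bis}) gives $\partial_t u \in L^{q'}(0,T;H^1(\Omega)')$ and $\partial_t v \in L^{q'}(0,T;L^2(\Omega))$. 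In the $2D$ case ($q'=2$) a Lions--Magenes/Aubin--Lions--Simon argument yields $u\in C([0,T];L^2(\Omega))$ and $v\in C([0,T];H^1(\Omega))$, the latter via the interpolation identity $[H^2,L^2]_{1/2}=H^1$; consequently $u(t)\to u_0$ and $\nabla v(t)\to \nabla v_0$ strongly in $L^2(\Omega)$ as $t\to 0^+$, and therefore $\mathcal{E}(u(t),v(t)) \to \mathcal{E}(u_0,v_0)$.

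With this continuity in hand I would select a sequence $t_0^k\downarrow 0$ of points at which (\ref{wsd}) holds (the valid $t_0$ form a full-measure set), fix any admissible $t_1$, and let $k\to\infty$: the boundary term converges by the previous step, while the dissipation integral $\int_{t_0^k}^{t_1}(\cdots)\,ds \to \int_{0}^{t_1}(\cdots)\,ds$ by absolute continuity of the integral, the integrand lying in $L^1(0,T)$ because $\Vert\nabla u\Vert_0\in L^2(0,T)$ and $\Vert\nabla v\Vert_1^2=\Vert\nabla v\Vert_0^2+\Vert\Delta v\Vert_0^2\in L^1(0,T)$ by (\ref{wsa}). This produces (\ref{wsd}) with $t_0=0$.

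The main obstacle is the $3D$ case, where (\ref{wsa-bis}) only provides $q'=4/3<2$. Then the interpolation argument no longer yields strong continuity of $u$ into $L^2(\Omega)$; one obtains merely weak continuity, $u(t)\rightharpoonup u_0$ in $L^2(\Omega)$, whence weak lower semicontinuity gives $\Vert u_0\Vert_0\le \liminf_{t\to 0^+}\Vert u(t)\Vert_0$, which is the wrong inequality for passing to the limit in the correct direction. I would circumvent this by invoking the construction of the weak-strong solution itself: the energy identity holds exactly at time $0$ for the Galerkin approximations and survives the limit through weak lower semicontinuity of $\mathcal{E}$ together with Fatou's lemma on the dissipation term, so that (\ref{wsd}) with $t_0=0$ is inherited directly. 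Alternatively, one can prove continuity of the scalar map $t\mapsto \Vert u(t)\Vert_0^2$ at $t=0$ by testing the equation and controlling $\frac{d}{dt}\Vert u\Vert_0^2$ near the origin. Either route closes the gap and completes the justification of the remark.
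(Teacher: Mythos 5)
The paper contains no proof of this remark: it is stated as a clarification of Definition \ref{ws00}, the understanding being that the admissible set of left endpoints $t_0$ in (\ref{wsd}) contains $t_0=0$ (exactly the convention used for Leray--Hopf solutions of Navier--Stokes), a property which the solutions built in \cite{FMD} satisfy by construction, since the time-discrete approximations verify the inequality from time $0$ and this survives the passage to the limit by weak lower semicontinuity. Measured against that, the easy half of your argument (dropping the nonnegative dissipation term gives monotonicity of $t\mapsto\mathcal{E}(u(t),v(t))$) is correct, and your two-dimensional argument is also sound: with $q'=2$, the Lions--Magenes lemma applied to $u\in L^2(0,T;H^1(\Omega))$, $\partial_t u\in L^2(0,T;H^1(\Omega)')$ and to $v-m_0^2\in L^2(0,T;H^2(\Omega))$, $\partial_t v\in L^2(0,T;L^2(\Omega))$ gives $u\in C([0,T];L^2(\Omega))$ and $v\in C([0,T];H^1(\Omega))$, so $\mathcal{E}(u(t_0^k),v(t_0^k))\to\mathcal{E}(u_0,v_0)$ along admissible $t_0^k\downarrow 0$ and the limit passage in (\ref{wsd}) is legitimate.

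The genuine gap is in the three-dimensional case, which you correctly flag as the obstruction but do not close. Your first fallback---inheriting the inequality at $t_0=0$ from the approximation scheme---establishes it only for the \emph{constructed} solution; the remark, read as a statement about Definition \ref{ws00}, concerns \emph{every} weak-strong solution, and in 3D no uniqueness result is available that would let you identify an arbitrary solution with a constructed one, so this route is circular unless the $t_0=0$ inequality is simply declared part of the definition (which is, in effect, how the paper treats it). Your second fallback fails outright: controlling $\frac{d}{dt}\Vert u(t)\Vert_0^2$ ``by testing the equation'' requires taking $\bar u = \hat u$ in (\ref{wf01}), which demands $\hat u\in L^4(0,T;H^1(\Omega))$ in 3D (where $q=4$), whereas (\ref{wsa}) only yields $u-m_0\in L^2(0,T;H^1(\Omega))\cap L^\infty(0,T;L^2(\Omega))$; this inadmissibility of $u$ as a test function is precisely why the definition postulates an a.e.\ energy \emph{inequality} rather than an identity, and it cannot be bypassed by the very computation you propose. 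So in 3D the claim is not derivable from (\ref{wsa})--(\ref{wf02}) alone: your write-up should either restrict the analytic argument to 2D, or present the $t_0=0$ case as a property built into the definition and verified by the construction of \cite{FMD}, rather than as a consequence of the other clauses.
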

\begin{obs} {\bf (Positivity)}\label{OBSP}
$u\geq 0$ in $1D$ and $2D$ domains and $v\geq 0$ in any ($1D$, $2D$ or $3D$) dimension are a consequence of (\ref{wsa})-(\ref{wf02}). Indeed, this follows from the fact that in these cases we can test (\ref{wf01}) by $u_{-}:= \min\{u,0\} \in L^{2}(0,T;H^1(\Omega))$ and (\ref{wf02}) by $v_{-}:= \min\{v,0\}\in L^{2}(0,T;H^2(\Omega))\hookrightarrow L^{2}(0,T;L^2(\Omega))$. Notice that in 3D domains, $u_{-}$ has no the sufficient regularity in order to take it as test function. Hence the positivity of $u$ cannot be deduced from (\ref{wsa})-(\ref{wf01}), which must be explicitly imposed.
\end{obs}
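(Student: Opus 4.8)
The plan is to prove the two sign properties separately, establishing $v\ge 0$ first (where the argument is dimension-free) and then $u\ge 0$ (where the dimension enters decisively). I will repeatedly use the Stampacchia truncation: for $w\in H^1(\Omega)$ one has $w_-:=\min\{w,0\}\in H^1(\Omega)$ with $\nabla w_-=\nabla w\,\chi_{\{w<0\}}$, the map $w\mapsto w_-$ being Lipschitz on $H^1(\Omega)$; and, whenever the time regularity of $w$ allows the duality pairing, $\langle\partial_t w,w_-\rangle=\tfrac12\tfrac{d}{dt}\Vert w_-\Vert_0^2$.

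For $v$, I would use that (\ref{wf02}) holds pointwise a.e.\ and that, by (\ref{wsa})--(\ref{wsa-bis}), $v\in L^2(0,T;H^2(\Omega))$ while $\partial_t v\in L^{q'}(0,T;L^2(\Omega))\subset L^1(0,T;L^2(\Omega))$ in every dimension, the latter being enough to differentiate the negative part. Multiplying (\ref{wf02}) by $v_-$, integrating over $\Omega$, and integrating by parts in the $-\Delta v$ term (the boundary contribution vanishing by the Neumann condition) gives
\[
\frac12\frac{d}{dt}\Vert v_-\Vert_0^2+\Vert\nabla v_-\Vert_0^2+\Vert v_-\Vert_0^2=(u^2,v_-)\le 0,
\]
since $u^2\ge 0$ and $v_-\le 0$. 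Discarding the nonnegative terms on the left and invoking $v_0\ge 0$ (hence $(v_0)_-=0$) yields $\Vert v_-(t)\Vert_0^2\le 0$, so $v_-\equiv 0$ and $v\ge 0$, with no restriction on $n$.

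For $u$ in $1D$ and $2D$, I would test (\ref{wf01}) with $\overline u=u_-$. This is admissible precisely because (\ref{wsa}) gives $u_-\in L^2(0,T;H^1(\Omega))$, which coincides with the required test space $L^q(0,T;H^1(\Omega))$ exactly when $q=2$, i.e.\ in $2D$ (and \emph{a fortiori} in $1D$). The diffusion term produces $\Vert\nabla u_-\Vert_0^2\ge 0$ and the time term $\tfrac12\tfrac{d}{dt}\Vert u_-\Vert_0^2$; in the chemotaxis term I would use $u\,\nabla u_-=u_-\nabla u_-=\tfrac12\nabla(u_-^2)$ and integrate by parts to obtain $(u\nabla v,\nabla u_-)=-\tfrac12\int_\Omega u_-^2\,\Delta v$, arriving at
\[
\frac12\frac{d}{dt}\Vert u_-\Vert_0^2+\Vert\nabla u_-\Vert_0^2=\frac12\int_\Omega u_-^2\,\Delta v.
\]
I would then close this by a Ladyzhenskaya/Sobolev estimate together with Young's inequality, bounding $\int_\Omega u_-^2\,\Delta v\le\Vert u_-\Vert_{L^4}^2\Vert\Delta v\Vert_0$ and using $\Vert u_-\Vert_{L^4}^2\le C\Vert u_-\Vert_0(\Vert\nabla u_-\Vert_0+\Vert u_-\Vert_0)$ in $2D$ to absorb $\Vert\nabla u_-\Vert_0^2$ into the left-hand side; this leaves $\tfrac{d}{dt}\Vert u_-\Vert_0^2\le C(t)\Vert u_-\Vert_0^2$, with $C(t)$ depending on $\Vert\Delta v(t)\Vert_0^2+\Vert\Delta v(t)\Vert_0$, which is integrable on $(0,T)$ thanks to $v\in L^2(0,T;H^2(\Omega))$. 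Gronwall's lemma together with $(u_0)_-=0$ then forces $u_-\equiv 0$, that is, $u\ge 0$.

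The hard part, and the reason the argument fails in $3D$, is purely a matter of test-function regularity. In $3D$ the admissible space in (\ref{wf01}) is $L^4(0,T;H^1(\Omega))$, whereas (\ref{wsa}) only provides $u_-\in L^2(0,T;H^1(\Omega))\cap L^\infty(0,T;L^2(\Omega))$; hence $u_-$ is not an admissible test function, and indeed the very pairing $\int_0^T\langle\partial_t u,u_-\rangle$ (with $\partial_t u\in L^{4/3}(0,T;H^1(\Omega)')$ by (\ref{wsa-bis})) need not be defined. This is the same deficit of integrability in time that already forces the chemotaxis estimate above to rely on dimension-restricted embeddings, and it is why in $3D$ the nonnegativity of $u$ cannot be recovered from (\ref{wsa})--(\ref{wf01}) and must be imposed as part of the definition of weak-strong solution.
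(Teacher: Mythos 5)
Your proposal is correct and follows essentially the same route as the paper's remark: test \eqref{wf02} by $v_{-}$ (which works in any dimension because the production term $u^2$ is nonnegative regardless of the sign of $u$) and test \eqref{wf01} by $u_{-}$, which is admissible exactly when $q=2$, i.e.\ in $1D$/$2D$, while in $3D$ the required test space $L^4(0,T;H^1(\Omega))$ exceeds the regularity $u_{-}\in L^2(0,T;H^1(\Omega))$ available from \eqref{wsa}. Your write-up in fact supplies the details the paper leaves implicit --- the Ladyzhenskaya--Young absorption of the chemotaxis term $\tfrac12\int_\Omega u_{-}^2\,\Delta v$ and the Gronwall step using $\Delta v\in L^2(0,T;L^2(\Omega))$ --- so it is a faithful, complete version of the paper's sketch.
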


In \cite{FMD}, it was proved the existence of weak-strong solutions of problem (\ref{modelf00}) (satisfying in particular the energy inequality (\ref{wsd})), through convergence of a time-discrete numerical scheme associated to model (\ref{modelf00}). Hereafter, in order to abbreviate, we will use the following notation: 
$$\hat{u}:= u - m_0, \ \ \hat{v}=v- m_0^2$$
for $m_0$ defined in (\ref{m0}).

\subsection{Convergence at infinite time}\label{subcc}
In this subsection, the asymptotic analysis of problem (\ref{modelf00}) is going to be analyzed in a formal manner, without justifying the computations and assuming sufficient regularity for the exact solution $(u,v)$. Our main interest is to reproduce the long time behaviour in fully discrete numerical schemes.

\

First, we define:
$$E(t):= \Vert \hat{u}(t)\Vert_0^2 + \frac{1}{2}\Vert \nabla v(t)\Vert_0^2 \  \mbox{ and } \ F(t):= \Vert \nabla \hat{u}(t)\Vert_0^2 + \frac{1}{2}\Vert \nabla v(t)\Vert_1^2.$$ Then, taking $\bar{u}=\hat{u}$ in (\ref{wf01})  and testing (\ref{wf02}) by $\bar{v}=-\frac{1}{2}\Delta v$, one arrives at
\begin{equation}\label{weakcont2}
\frac{1}{2} E'(t)+ F(t) =0.
\end{equation}
Therefore, using the Poincar\'e inequality $\Vert \nabla \hat{u}\Vert_0^2\geq C_p \Vert \hat{u}\Vert_0^2$ one has that $ 2F(t)\geq 2(C_p \Vert \hat{u}(t)\Vert_0^2 + \frac{1}{2}\Vert \nabla v(t)\Vert_1^2)\geq 2K_p E(t)$ (with $K_p=\min\{C_p,1\}$), and from (\ref{weakcont2}) one can deduce 
\begin{equation}\label{c21}
E(t)\leq  \Vert (\hat{u}_0,\nabla v_0)\Vert_0^2e^{-2K_p t}, \ \ \forall t\geq 0.
\end{equation}
Moreover, testing (\ref{wf02}) by $\bar{v}=\hat{v}$ and using (\ref{wsa}) and (\ref{c21}), one has 
$$
\frac{d}{dt}\Vert \hat{v}\Vert_0^2 + \Vert \hat{v}\Vert_1^2 \leq C\Vert \hat{u} \Vert_{0}^2\Vert \hat{u} + 2m_0\Vert_{L^3}^2\leq C e^{-2K_pt}(1+\Vert \hat{u}\Vert_1^2),
$$
 from which one arrives at
\begin{equation}\label{newaa}
\Vert \hat{v}(t)\Vert_0^2 \leq \Vert \hat{v}_0\Vert_0^2 e^{-t}  + C e^{-t}  \int_{0}^{t} e^{(1-2K_p)s} \, ds + C e^{-t}  \int_{0}^{t} e^{(1-2K_p)s} \Vert \hat{u}(s)\Vert_1^2 \, ds.
\end{equation}
The last two terms on the right hand side of (\ref{newaa}) are bounded by
\begin{equation}\label{B001} 
C e^{-t}  \int_{0}^{t} e^{(1-2K_p)s} \, ds\leq 
\left\{\begin{array}{l}
C e^{-t} \ \mbox{ if } 2K_p>1,\\
Cte^{-t}  \ \mbox{ if } 2K_p=1,\\
C e^{-2K_pt}  \ \mbox{ if } 2K_p<1,
\end{array}\right.
\end{equation} 
and
\begin{equation}\label{B001-a} 
C e^{-t}  \int_{0}^{t} e^{(1-2K_p)s} \Vert \hat{u}(s)\Vert_1^2 \, ds\leq 
\left\{\begin{array}{l}
C e^{-t} \ \mbox{ if } 2K_p>1,\\
Ce^{-t}  \ \mbox{ if } 2K_p=1,\\
C e^{-2K_pt}  \ \mbox{ if } 2K_p<1,
\end{array}\right.
\end{equation} 
where (\ref{wsa}) was used in (\ref{B001-a}). Thus, from (\ref{newaa})-(\ref{B001-a}) one can deduce that, for any t>1,
\begin{equation*}
\Vert \hat{v}(t)\Vert_0^2 \leq \Vert \hat{v}_0\Vert_0^2 e^{-t}  + 
\left\{\begin{array}{l}
C e^{-t} \ \mbox{ if } 2K_p>1,\\
Cte^{-t}  \ \mbox{ if } 2K_p=1,\\
C e^{-2K_pt}  \ \mbox{ if } 2K_p<1,
\end{array}\right.
\leq 
C\left\{\begin{array}{l}
 e^{-t} \ \mbox{ if } 2K_p>1,\\
te^{-t}  \ \mbox{ if } 2K_p=1,\\
 e^{-2K_pt}  \ \mbox{ if } 2K_p<1.
\end{array}\right.
\end{equation*} 
\

\section{Scheme \textbf{UV}}\label{NEs}
The first scheme that will be studied in this paper is obtained by using FE in space and backward Euler in time for the system (\ref{modelf00}) 
(considered for simplicity on a uniform partition of $[0,+\infty)$ given by $t_n=nk$, where $k>0$ denotes the time step). Concerning the space discretization, we consider a family of shape-regular and quasi-uniform triangulations $\{\mathcal{T}_h\}_{h>0}$  of $\overline{\Omega}$ made up of simplexes 
(intervals in one dimension,
triangles in two dimensions and tetrahedra in three dimensions), so that $\overline{\Omega}= \cup_{K\in \mathcal{T}_h} K$, where $h = \max_{K\in \mathcal{T}_h} h_K$, with $h_K$ being the diameter of $K$. 
We choose FE spaces for $u$ and $v$, which we denote by 
\begin{equation*}
(U_h, V_h) \subset H^1 \times W^{1,6} \mbox{ generated by } (\mathbb{P}_m,\mathbb{P}_{2m})\mbox{-continuous FE, with } m\geq 1.
\end{equation*} 
With this choice, $(u_h^n)^2 \in V_h$ is guaranteed, which will be the key point to prove the energy stability of this scheme (see Lemma \ref{estinc1uv} below). Then, the following first order in time, nonlinear and coupled scheme is considered (hereafter, we denote $\delta_t a^n= (a^n - a^{n-1})/k$):
\begin{itemize}
	\item{\underline{\emph{Scheme \textbf{UV}:}}\\
		{\bf Initialization}: Let $(u^{0}_h,v^0_h)\in  U_h\times V_h$ be a suitable approximation of $(u_0,v_0)\in L^2(\Omega) \times H^1(\Omega)$, as $h\rightarrow 0$, with $\displaystyle\frac{1}{\vert\Omega\vert}\int_\Omega u^0_h = \displaystyle\frac{1}{\vert\Omega\vert}\int_\Omega u_0 = m_0$. \\
		{\bf Time step} n: Given $(u^{n-1}_h,v^{n-1}_h)\in  U_h\times V_h$, compute $(u^{n}_h,v^{n}_h)\in  U_h\times V_h$ solving
		\begin{equation}
		\left\{
		\begin{array}
		[c]{lll}%
		(\delta_t u^n_h,\bar{u}_h) + (\nabla u^n_h, \nabla \bar{u}_h) +(u^n_h\nabla v^n_h,\nabla \bar{u}_h)=0, \ \ \forall \bar{u}_h\in U_h,\\
		(\delta_t v^n_h,\bar{v}_h)  +(\nabla v^n_h, \nabla \bar{v}_h) + (v^n_h,\bar{v}_h)  -((u^n_h)^2,\bar{v}_h) = 0, \ \ \forall
		\bar{v}_h\in V_h.
		\end{array}
		\right.  \label{modelf02uv}
		\end{equation}
}
\end{itemize}

\subsection{Mass-conservation, well-posedness, energy-stability and convergence}\label{EESuv}
In this subsection, we follow the arguments presented in \cite{FMD2}. Consequently, the results will be presented omiting technical details. Assuming that $1\in U_h$ and $1\in V_h$, the scheme \textbf{UV} satisfies
\begin{equation}\label{consuuv}
\int_\Omega u^n_h=\int_\Omega u^{n-1}_h=\cdot\cdot\cdot=\int_\Omega
u^{0}_h=m_0\vert \Omega\vert,
\end{equation}
and
\begin{equation}\label{compv1uv} 
\delta_t \left(\int_\Omega v^n_h \right)= \int_\Omega (u^n_h)^2  - \int_\Omega v^n_h.
\end{equation}
\begin{tma} {\bf(Unconditional solvability and conditional uniqueness)}
There exists \linebreak{$(u^n_h,v^n_h) \in  U_h\times V_h$} solution of the scheme \textbf{UV}. Moreover, if 
\begin{equation*}
k\Vert (u^n_h,\nabla v^n_h)
\Vert_{1}^4 \quad \hbox{is small enough,}
\end{equation*}
then the solution is unique.
\end{tma}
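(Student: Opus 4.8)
The plan is to obtain existence from a fixed point argument (Leray--Schauder/Schaefer) in the finite dimensional space $X:=U_h\times V_h$, using as the only a priori estimate the energy computation behind Lemma~\ref{estinc1uv}, and to obtain the conditional uniqueness from an estimate on the difference of two solutions, in which the factor $1/k$ coming from $\delta_t$ is what absorbs the nonlinear coupling.

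For existence I would fix $(u^{n-1}_h,v^{n-1}_h)$, introduce the discrete Laplacian $\Delta_h\colon V_h\to V_h$ given by $(\Delta_h w,\bar v_h)=-(\nabla w,\nabla\bar v_h)$ for all $\bar v_h\in V_h$, and define a continuous map $T\colon X\to X$ by decoupling and linearising: given $(w,z)\in X$, let $u\in U_h$ solve the coercive linear problem $\frac1k(u,\bar u_h)+(\nabla u,\nabla\bar u_h)=\frac1k(u^{n-1}_h,\bar u_h)-(w\nabla z,\nabla\bar u_h)$ (convection treated as data), and then let $v\in V_h$ solve $\frac1k(v,\bar v_h)+(\nabla v,\nabla\bar v_h)+(v,\bar v_h)=\frac1k(v^{n-1}_h,\bar v_h)+(u^2,\bar v_h)$; set $T(w,z)=(u,v)$. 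A fixed point of $T$ is exactly a solution of (\ref{modelf02uv}), and since $X$ is finite dimensional $T$ is automatically compact, so by Schaefer's theorem it suffices to bound, uniformly in $\sigma\in[0,1]$, any $\xi=(w,z)$ with $\xi=\sigma T\xi$. That bound is the content of Lemma~\ref{estinc1uv}: for $\xi=\sigma T\xi$ one has $w=\sigma u$, $z=\sigma v$, and testing the $u$-equation with $u$ and the $v$-equation with $-\tfrac12\Delta_h v$, the compatibility $(u^n_h)^2\in V_h$ lets one rewrite $(u^2,\Delta_h v)=-(\nabla u^2,\nabla v)$, so that after multiplying the $v$-identity by $\sigma^2$ and adding, the two convective contributions $\pm\tfrac{\sigma^2}{2}(\nabla u^2,\nabla v)$ cancel and the $\delta_t$-data is absorbed by Young, giving
\[
\tfrac1{2k}\Vert u\Vert_0^2+\Vert\nabla u\Vert_0^2+\tfrac{\sigma^2}2\bigl(\Vert\nabla v\Vert_0^2+\Vert\Delta_h v\Vert_0^2\bigr)\ \le\ \tfrac1{2k}\Vert u^{n-1}_h\Vert_0^2+\tfrac1{4k}\Vert\nabla v^{n-1}_h\Vert_0^2,
\]
a bound on $u$ in $H^1$ independent of $\sigma$. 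Once $u$ is controlled, $u^2$ is bounded data and the coercive $v$-equation gives $v$ in $H^1$ (the mean of $v$ being fixed by testing with the constant $1\in V_h$). Hence the homotopy set is bounded and a solution of (\ref{modelf02uv}) exists, with no restriction on $k$.

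For uniqueness I would take two solutions $(u_i,v_i):=(u^n_{h,i},v^n_{h,i})$, $i=1,2$, set $u:=u_1-u_2$, $v:=v_1-v_2$, subtract the two copies of (\ref{modelf02uv}) and use $u_1\nabla v_1-u_2\nabla v_2=u\nabla v_1+u_2\nabla v$ and $(u_1)^2-(u_2)^2=(u_1+u_2)u$. Since both solutions share the mass (\ref{consuuv}), $\int_\Omega u=0$ and Poincaré gives $\Vert u\Vert_1\le C\Vert\nabla u\Vert_0$. Testing the $u$-difference with $u$ and the $v$-difference with $v$ and adding, the linear part produces $\tfrac1k\Vert u\Vert_0^2+\Vert\nabla u\Vert_0^2+\tfrac1k\Vert v\Vert_0^2+\Vert\nabla v\Vert_0^2+\Vert v\Vert_0^2$, while the three nonlinear terms must be absorbed. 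I would bound them by Hölder, the embedding $H^1\hookrightarrow L^6$ and the interpolation $\Vert w\Vert_{L^3}\le C\Vert w\Vert_0^{1/2}\Vert\nabla w\Vert_0^{1/2}$, followed by weighted Young; for instance $(u\nabla v_1,\nabla u)\le C\Vert\nabla v_1\Vert_{L^6}\Vert u\Vert_0^{1/2}\Vert\nabla u\Vert_0^{3/2}\le\tfrac14\Vert\nabla u\Vert_0^2+C\Vert\nabla v^n_h\Vert_1^4\Vert u\Vert_0^2$, the fourth power appearing precisely through this $L^3$--$L^6$ splitting. After absorbing the gradient terms and multiplying by $k$, one is left with $(1-Ck\Vert(u^n_h,\nabla v^n_h)\Vert_1^4)(\Vert u\Vert_0^2+\Vert v\Vert_0^2)\le0$, so that $u=v=0$ as soon as $k\Vert(u^n_h,\nabla v^n_h)\Vert_1^4$ is small enough.

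The main obstacle I expect is the estimation of the nonlinear terms in the uniqueness step: one must bound each of $(u\nabla v_1,\nabla u)$, $(u_2\nabla v,\nabla u)$ and $((u_1+u_2)u,v)$ uniformly for $n=1,2,3$ (hence without inverse inequalities, which would spoil the $h$-independence of the constants) and balance every one against $\tfrac14\Vert\nabla u\Vert_0^2$ or $\tfrac14\Vert\nabla v\Vert_0^2$ and against the $\tfrac1k$-terms, so that the residual coefficient is exactly $Ck\Vert(u^n_h,\nabla v^n_h)\Vert_1^4$; the term carrying $\nabla v$ (rather than $\nabla v_1$) is the delicate one, and is where the regularity $V_h\subset W^{1,6}$ and, if needed, the $\Delta_h$-dissipation obtained by testing with $-\tfrac12\Delta_h v$ enter. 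In the existence step the only genuine point is the convective cancellation, which hinges entirely on $(u^n_h)^2\in V_h$, everything else being a coercive linear solve.
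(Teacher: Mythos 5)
Your proposal is correct and is essentially the paper's own approach: the paper's proof is a one-line appeal to the arguments of Theorem 4.4 of \cite{FMD}, and those arguments are precisely what you reconstruct --- a fixed-point (Leray--Schauder/Schaefer) existence argument whose uniform a priori bound rests on the cancellation enabled by $(u^n_h)^2\in V_h$, followed by a difference estimate for uniqueness in which the $1/k$ terms coming from $\delta_t$ absorb the nonlinear coupling under the stated smallness condition. The only point to firm up is the one you yourself flag: for the term $(u_2\nabla v,\nabla u)$ the test functions $u$ and $v$ alone cannot close the estimate $h$-uniformly, so testing the $v$-difference equation additionally with $A_h v$ (equivalently $-\Delta_h v$) and invoking (\ref{Ah1}) to get $\Vert \nabla v\Vert_{L^3}\le \Vert\nabla v\Vert_0^{1/2}\Vert\nabla v\Vert_{L^6}^{1/2}\le C\Vert\nabla v\Vert_0^{1/2}\Vert A_h v\Vert_0^{1/2}$ is not optional but necessary (it does close the argument, with the smallness quantity then naturally expressed through $\Vert u^n_h\Vert_1$ and $\Vert A_h v^n_h\Vert_0$ rather than the broken norm $\Vert\nabla v^n_h\Vert_1$ appearing in the statement).
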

\begin{proof}
The proof follows the arguments of Theorem 4.4 of \cite{FMD}.
\end{proof}

Let $A_h: H^1(\Omega) \rightarrow V_h$ be the linear operator defined as follows
\begin{equation}\label{discVh}
(A_h v_h, \bar{v}_h)=(\nabla v_h,\nabla \bar{v}_h)+( v_h, \bar{v}_h) , \ \ \forall \bar{v}_h\in V_h.
\end{equation}
Then, the discrete chemical equation (\ref{modelf02uv})$_2$ can be rewritten as
\begin{equation}\label{refvh}
(\delta_t v^n_h,\bar{v}_h)  +(A_h v^n_h, \bar{v}_h) -((u^n_h)^2,\bar{v}_h) = 0, \ \ \forall
\bar{v}_h\in V_h,
\end{equation}
and the following estimate holds (see for instance, Lemma 3.1 in \cite{FMD2}):
\begin{equation}\label{Ah1}
\Vert v_h \Vert_{W^{1,6}}\leq C \Vert A_h v_h\Vert_0, \ \ \forall v_h\in V_h.
\end{equation}

\begin{defi}\label{enesf00}
A numerical scheme with solution $(u^n_h,v^n_h)$ is called energy-stable  if the energy defined in (\ref{eneruva}) is time decreasing, that is, 
	\begin{equation*}
	\mathcal{E}(u^n_h,v^n_h)\leq \mathcal{E}(u^{n-1}_h,v^{n-1}_h), \ \ \forall n\geq 1.
	\end{equation*}
\end{defi}

\begin{lem} {\bf (Unconditional stability)} \label{estinc1uv}
If $(u_h^n,v_h^n)$ is generated by $(\mathbb{P}_m,\mathbb{P}_{2m})$-continuous FE, then the scheme \textbf{UV} is unconditionally energy-stable. In fact, if $(u^n_h,v^n_h)$ is any solution of the scheme \textbf{UV}, then the following discrete energy law holds
\begin{eqnarray}\label{lawenerfydisceuv}
&\delta_t \mathcal{E}(\hat{u}^n_h,v^n_h)&\!\!\!\!\!+ 
\frac{k}{2} 
\Vert \delta_t \hat{u}^n_h\Vert_{0}^2 +
\frac{k}{4} \Vert \delta_t \nabla v^n_h\Vert_{0}^2 
 + \Vert  \hat{u}^n_h\Vert_{1}^{2} +
\displaystyle\frac{1}{2}\Vert (A_h-I) v^n_h\Vert_{0}^{2} +
\displaystyle\frac{1}{2}\Vert \nabla v^n_h\Vert_{0}^{2}=0.
\end{eqnarray}
\end{lem}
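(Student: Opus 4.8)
The goal is a discrete energy law mimicking the continuous computation leading to (\ref{weakcont2}). The plan is to test the two equations of the scheme with the discrete analogues of the test functions used in the continuous setting, then sum. For the continuous energy $\mathcal{E}(u,v)=\frac12\|u\|_0^2+\frac14\|\nabla v\|_0^2$, the natural choices are $\bar{u}_h=\hat{u}^n_h$ in (\ref{modelf02uv})$_1$ and $\bar{v}_h=-\frac12\Delta v^n_h$ (discretely, $\bar{v}_h=\frac12(I-A_h)v^n_h$) in the chemical equation. First I would take $\bar{u}_h=\hat{u}^n_h=u^n_h-m_0$ in (\ref{modelf02uv})$_1$. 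Since $\int_\Omega u^n_h=m_0|\Omega|$ is constant in $n$ by (\ref{consuuv}), we have $\delta_t u^n_h=\delta_t\hat{u}^n_h$ and $\nabla u^n_h=\nabla\hat{u}^n_h$, so the time-derivative term yields, via the standard identity $(\delta_t a^n,a^n)=\frac12\delta_t\|a^n\|_0^2+\frac{k}{2}\|\delta_t a^n\|_0^2$, the contributions $\frac12\delta_t\|\hat{u}^n_h\|_0^2+\frac{k}{2}\|\delta_t\hat{u}^n_h\|_0^2+\|\nabla\hat{u}^n_h\|_0^2$, plus the coupling term $(u^n_h\nabla v^n_h,\nabla\hat{u}^n_h)$.

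Next I would handle the chemical equation. Testing (\ref{refvh}) with $\bar{v}_h=\frac12(I-A_h)v^n_h$, the key is that $A_h$ is self-adjoint on $V_h$ (immediate from (\ref{discVh})) and that $(A_hv_h,\bar v_h)=(\nabla v_h,\nabla\bar v_h)+(v_h,\bar v_h)$. The time term gives $\frac12(\delta_t v^n_h,(I-A_h)v^n_h)$; using $(I-A_h)v^n_h=-(\text{Laplacian-like part})$ and the self-adjointness, this produces the gradient-energy time derivative $\frac14\delta_t\|\nabla v^n_h\|_0^2$ together with the numerical-dissipation term $\frac{k}{4}\|\delta_t\nabla v^n_h\|_0^2$. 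The operator term $\frac12(A_hv^n_h,(I-A_h)v^n_h)=-\frac12\|(A_h-I)v^n_h\|_0^2+\frac12((A_h-I)v^n_h,v^n_h)$; expanding $((A_h-I)v^n_h,v^n_h)=(\nabla v^n_h,\nabla v^n_h)=\|\nabla v^n_h\|_0^2$ recovers the terms $\frac12\|(A_h-I)v^n_h\|_0^2$ and $\frac12\|\nabla v^n_h\|_0^2$ appearing in (\ref{lawenerfydisceuv}). Finally the production term contributes $-\frac12((u^n_h)^2,(I-A_h)v^n_h)$.

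The main obstacle — and the reason the compatibility hypothesis $(\mathbb{P}_m,\mathbb{P}_{2m})$ enters — is the cancellation of the two nonlinear coupling terms. After summing, the leftover nonlinear contributions are $(u^n_h\nabla v^n_h,\nabla\hat{u}^n_h)$ from the $u$-equation and $-\frac12((u^n_h)^2,(I-A_h)v^n_h)$ from the $v$-equation. The crux is to show these cancel. I would rewrite $(u^n_h\nabla v^n_h,\nabla\hat{u}^n_h)=(u^n_h\nabla v^n_h,\nabla u^n_h)=\frac12(\nabla(u^n_h)^2,\nabla v^n_h)$, which demands that $(u^n_h)^2$ be a legitimate element on which $\nabla$ acts against $\nabla v^n_h$ inside the discrete pairing — precisely what $(u^n_h)^2\in V_h$ guarantees, so this term may be paired with the production term. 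Integrating by parts (or rather using the definition of $A_h$ with test function $(u^n_h)^2\in V_h$), one gets $\frac12(\nabla(u^n_h)^2,\nabla v^n_h)=\frac12((u^n_h)^2,(A_h-I)v^n_h)=-\frac12((u^n_h)^2,(I-A_h)v^n_h)$, which is exactly the negative of the production term. Hence the two nonlinear terms cancel, and collecting everything yields (\ref{lawenerfydisceuv}). I expect verifying this cancellation — and being careful that every manipulation of $A_h$ uses only test functions that genuinely lie in $V_h$ (which is where $(u^n_h)^2\in V_h$, guaranteed by the $\mathbb{P}_{2m}$ choice, is indispensable) — to be the delicate point; everything else is the routine $(\delta_t a,a)$ identity and self-adjointness of $A_h$.
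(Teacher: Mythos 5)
Your overall strategy is exactly the paper's: test (\ref{modelf02uv})$_1$ with $\bar u_h=\hat u^n_h$, test (\ref{refvh}) with the discrete analogue of $-\frac12\Delta v^n_h$, and use $(u^n_h)^2\in V_h$ to cancel the two nonlinear terms via (\ref{discVh}). However, there is a sign error in your identification of that discrete analogue, and it breaks the algebra as written. From (\ref{discVh}), $((A_h-I)v_h,\bar v_h)=(\nabla v_h,\nabla \bar v_h)$ for all $\bar v_h\in V_h$, so $A_h-I$ (not $I-A_h$) is the discrete version of $-\Delta$; the correct test function is $\bar v_h=\frac12(A_h-I)v^n_h$, which is what the paper uses. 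With your choice $\bar v_h=\frac12(I-A_h)v^n_h$ the stated intermediate identities fail: the time term gives $\frac12(\delta_t v^n_h,(I-A_h)v^n_h)=-\frac12(\nabla\delta_t v^n_h,\nabla v^n_h)=-\frac14\delta_t\Vert\nabla v^n_h\Vert_0^2-\frac{k}{4}\Vert\delta_t\nabla v^n_h\Vert_0^2$ (wrong sign); the operator term gives $\frac12(A_hv^n_h,(I-A_h)v^n_h)=-\frac12\Vert(A_h-I)v^n_h\Vert_0^2-\frac12\Vert\nabla v^n_h\Vert_0^2$ (both signs wrong; note also that your own expansion has an internal slip, since the cross term is $-\frac12((A_h-I)v^n_h,v^n_h)$, not $+\frac12((A_h-I)v^n_h,v^n_h)$); and, crucially, the nonlinear terms no longer cancel: the production term contributes $-\frac12((u^n_h)^2,(I-A_h)v^n_h)=+\frac12((u^n_h)^2,(A_h-I)v^n_h)$, which is \emph{equal to} (not the negative of) the coupling term $(u^n_h\nabla v^n_h,\nabla\hat u^n_h)=\frac12(\nabla(u^n_h)^2,\nabla v^n_h)=\frac12((u^n_h)^2,(A_h-I)v^n_h)$, so adding the two tested equations doubles this term instead of removing it.

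The gap is purely this sign: if you replace your test function by $\frac12(A_h-I)v^n_h$, every step you describe goes through verbatim --- the identity $(\delta_t a^n,a^n)=\frac12\delta_t\Vert a^n\Vert_0^2+\frac{k}{2}\Vert\delta_t a^n\Vert_0^2$, the symmetry of the bilinear form defining $A_h$, and the cancellation mechanism resting on $(u^n_h)^2\in V_h$ are all correct and are precisely the ingredients of the paper's (very terse) proof. One further small point worth making explicit: your $u$-equation produces $\Vert\nabla\hat u^n_h\Vert_0^2$, while (\ref{lawenerfydisceuv}) has $\Vert\hat u^n_h\Vert_1^2$; these coincide because (\ref{consuuv}) gives $\int_\Omega\hat u^n_h=0$ and the paper's $H^1$-norm is $\Vert\nabla\cdot\Vert_0^2+\bigl(\int_\Omega\cdot\bigr)^2$.
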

\begin{proof}
Taking $\bar{u}_h=\hat{u}^n_h$ in (\ref{modelf02uv})$_1$, $\bar{v}_h= \displaystyle\frac{1}{2}(A_h -I) v^n_h$
in (\ref{refvh}) and using (\ref{discVh}), (\ref{lawenerfydisceuv}) is deduced.
\end{proof}

From the (local in time) discrete energy law (\ref{lawenerfydisceuv}), we deduce the following global in time estimates. 

\begin{lem}  {\bf(Uniform weak-strong estimates)}
Let $(u^n_h,v^n_h)$ be any solution of the scheme \textbf{UV}. Then, the following estimate holds
\begin{equation}\label{weak01uv}
\Vert (\hat{u}^n_h, v^n_h)\Vert_{0\times 1}^{2}
+ k \underset{m=1}{\overset{n}{\sum}}\left(\Vert \hat{u}^m_h \Vert_{1}^2 +  \Vert  \hat{v}^m_h \Vert_{W^{1,6}}^2\right)  \leq C_0, \ \ \ \forall n\geq 1.
\end{equation}
\end{lem}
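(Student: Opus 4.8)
The plan is to integrate (sum) the discrete energy law (\ref{lawenerfydisceuv}) in time and then close the missing low-order and mean contributions of $v^n_h$ with the help of the scalar balance (\ref{compv1uv}). First I would multiply (\ref{lawenerfydisceuv}) by $k$ and sum over $m=1,\dots,n$: the term $\delta_t\mathcal{E}$ telescopes, and discarding the nonnegative numerical-dissipation terms $\frac{k}{2}\Vert\delta_t\hat u^m_h\Vert_0^2$ and $\frac{k}{4}\Vert\delta_t\nabla v^m_h\Vert_0^2$ yields
\[
\mathcal{E}(\hat u^n_h,v^n_h)+k\sum_{m=1}^n\Big(\Vert\hat u^m_h\Vert_1^2+\tfrac12\Vert(A_h-I)v^m_h\Vert_0^2+\tfrac12\Vert\nabla v^m_h\Vert_0^2\Big)\le \mathcal{E}(\hat u^0_h,v^0_h).
\]
Since $(u^0_h,v^0_h)$ approximates $(u_0,v_0)\in L^2(\Omega)\times H^1(\Omega)$, the right-hand side is bounded by a data constant $C_0$. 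Recalling (\ref{eneruva}), this already gives the uniform bounds $\Vert\hat u^n_h\Vert_0^2\le C_0$, $\Vert\nabla v^n_h\Vert_0^2\le C_0$, together with the summability of $k\sum\Vert\hat u^m_h\Vert_1^2$, $k\sum\Vert(A_h-I)v^m_h\Vert_0^2$ and $k\sum\Vert\nabla v^m_h\Vert_0^2$.

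The energy law controls only gradient-type quantities of $v$, so the mean of $v^n_h$ must be recovered separately. Writing $y^n=\int_\Omega v^n_h$ and using (\ref{consuuv}) to get $\int_\Omega(u^n_h)^2=\Vert\hat u^n_h\Vert_0^2+m_0^2\vert\Omega\vert$, equation (\ref{compv1uv}) becomes $(1+k)y^n=y^{n-1}+k\int_\Omega(u^n_h)^2$. Since $\int_\Omega(u^n_h)^2$ is uniformly bounded by the previous step, a one-line induction shows $\vert y^n\vert\le C_0$. With the equivalent norm $\Vert v^n_h\Vert_1^2=\Vert\nabla v^n_h\Vert_0^2+(y^n)^2$, this controls $\Vert(\hat u^n_h,v^n_h)\Vert_{0\times1}^2$, i.e. the first term of the claim.

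For the summed $W^{1,6}$ term I would apply (\ref{Ah1}) to $\hat v^m_h\in V_h$ (here it matters that $1\in V_h$) together with the identity $A_h\hat v^m_h=(A_h-I)v^m_h+\hat v^m_h$, valid because $A_h$ reproduces constants by (\ref{discVh}); this gives $\Vert\hat v^m_h\Vert_{W^{1,6}}\le C(\Vert(A_h-I)v^m_h\Vert_0+\Vert\hat v^m_h\Vert_0)$. The first piece is summable from Step 1, while Poincar\'e's inequality yields $\Vert\hat v^m_h\Vert_0^2\le C(\Vert\nabla v^m_h\Vert_0^2+(z^m)^2)$ with $z^m=\int_\Omega\hat v^m_h=y^m-m_0^2\vert\Omega\vert$. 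The heart of the matter is thus to prove $k\sum_{m=1}^n(z^m)^2\le C_0$, which is \emph{not} implied by the mere boundedness of $z^m$. I would obtain it by testing the recursion $\delta_t z^n+z^n=\Vert\hat u^n_h\Vert_0^2$ (another form of (\ref{compv1uv})) by $z^n$ and using $z^n(z^n-z^{n-1})\ge\frac12((z^n)^2-(z^{n-1})^2)$, reaching after summation $k\sum_{m=1}^n(z^m)^2\le(z^0)^2+k\sum_{m=1}^n\Vert\hat u^m_h\Vert_0^4$.

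The last sum is finite because $k\sum\Vert\hat u^m_h\Vert_0^4\le(\sup_m\Vert\hat u^m_h\Vert_0^2)\,k\sum\Vert\hat u^m_h\Vert_0^2\le C_0\,k\sum\Vert\hat u^m_h\Vert_1^2\le C_0$, which combines the uniform $L^\infty$-in-time bound with the $\ell^2$-in-time bound from Step 1. I expect precisely this conversion of the quadratic production term into a summable quartic quantity (via the $L^\infty$-in-time times $\ell^2$-in-time splitting) to be the main technical obstacle; the rest is telescoping and the norm equivalences already recorded in the paper. Collecting the pieces then gives $k\sum\Vert\hat v^m_h\Vert_{W^{1,6}}^2\le C_0$ and completes the estimate (\ref{weak01uv}).
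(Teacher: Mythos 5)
Your proposal is correct, and it reaches (\ref{weak01uv}) by a genuinely different route in its second half. Step 1 (multiplying (\ref{lawenerfydisceuv}) by $k$ and telescoping) is identical to the paper. From there the paper proceeds by testing the discrete chemical equation, rewritten in the form (\ref{vm0}), with the full function $\hat v^n_h$: the production term is estimated via H\"older with exponents $(3/2,6,6)$, the embedding $H^1\hookrightarrow L^6$ and the uniform $L^2$ bound on $\hat u^n_h$, yielding $\delta_t\Vert\hat v^n_h\Vert_0^2+\Vert\hat v^n_h\Vert_1^2\le C\Vert\hat u^n_h\Vert_1^2$, which after summation gives simultaneously the uniform bound on $\Vert v^n_h\Vert_0$ and the summability of $k\sum\Vert\hat v^m_h\Vert_1^2$; the $W^{1,6}$ bound then follows from (\ref{Ah1}) exactly as in your Step 3. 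You instead never test the PDE by $\hat v^n_h$: you split $\hat v^n_h$ into gradient part (already controlled by the energy law) plus mean, and reduce everything to the scalar recursion $\delta_t z^n+z^n=\Vert\hat u^n_h\Vert_0^2$ coming from (\ref{compv1uv}) and (\ref{consuuv}), tested by $z^n$. Your key observation --- that boundedness of $z^m$ alone cannot give $k\sum(z^m)^2\le C_0$ globally in time, and that one must convert $k\sum\Vert\hat u^m_h\Vert_0^4$ into a finite quantity via the $\sup_m\Vert\hat u^m_h\Vert_0^2$ times $k\sum\Vert\hat u^m_h\Vert_1^2$ splitting --- is precisely the discrete mechanism the paper exploits too (there, hidden in the bound $\Vert\hat u^n_h+2m_0\Vert_{L^{3/2}}^2\Vert\hat u^n_h\Vert_{L^6}^2\le C\Vert\hat u^n_h\Vert_{H^1}^2$), so the two arguments are structurally parallel but mechanically distinct. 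What each buys: your route is more elementary (no Sobolev embedding or H\"older interpolation is needed for the $v$-equation, only the norm equivalence $\Vert v\Vert_1^2=\Vert\nabla v\Vert_0^2+(\int_\Omega v)^2$ and Poincar\'e for mean-free functions), and it isolates cleanly the role of the mean of $v^n_h$; the paper's route is shorter, obtains $k\sum\Vert\hat v^m_h\Vert_1^2$ in a single test, and generalizes more directly to situations where the right-hand side is not controlled by a scalar quantity. Both proofs finish with the same identity $A_h\hat v^m_h=(A_h-I)v^m_h+\hat v^m_h$ (constants being reproduced by $A_h$ thanks to (\ref{discVh}) and $1\in V_h$) combined with (\ref{Ah1}).
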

\begin{proof}
Multiplying (\ref{lawenerfydisceuv}) by $k$ and summing, one obtains
\begin{equation}\label{weak01uv-a}
\Vert (\hat{u}^n_h, \nabla v^n_h)\Vert_{0}^{2}
+ k \underset{m=1}{\overset{n}{\sum}}\left(\Vert \hat{u}^m_h \Vert_{1}^2 + \Vert \nabla v^m_h \Vert_{0}^2 + \Vert (A_h-I) v^m_h \Vert_{0}^2\right)  \leq C_0, \ \ \ \forall n\geq 1.
\end{equation}
On the other hand, rewriting (\ref{modelf02uv}) as
\begin{equation}\label{vm0}
(\delta_t \hat{v}^n_h,\bar{v}_h)  +( {A}_h \hat {v}^n_h,\bar{v}_h)  =((\hat{u}^n_h+2 m_0)\hat{u}^n_h,\bar{v}_h), \ \ \forall
\bar{v}_h\in V_h,
\end{equation}
and taking $\bar{v}=\hat{v}^n_h$ one has
\begin{eqnarray*}
	&\displaystyle\delta_t \Vert \hat{v}^n_h \Vert_0^2 &\!\!\!\!  
	+ \Vert \hat{v}^n_h\Vert_1^2\leq C\Vert \hat{u}^n_h + 2 m_0\Vert_{L^{3/2}}^2 \Vert \hat{u}^n_h\Vert_{L^6}^2\leq C \Vert \hat{u}^n_h\Vert_{H^1}^2,
\end{eqnarray*}
from which, multiplying by $k$, adding and using (\ref{weak01uv-a}), one can deduce 
\begin{equation}\label{weak02UVlinvL2-a}
\Vert v^n_h \Vert_0^2 + k \underset{m=1}{\overset{n}{\sum}}\Vert \hat{v}^m_h \Vert_1^2  \leq K_0, \ \ \ \forall n\geq 1.
\end{equation}
Then, adding (\ref{weak01uv-a}) and (\ref{weak02UVlinvL2-a}) and using (\ref{Ah1}), (\ref{weak01uv}) is obtained.

\end{proof}
Starting from the previous stability estimates,  the convergence towards weak solutions of (\ref{modelf00}) can be proved. Concretely, by introducing the functions:
\begin{itemize}
\item $(\widetilde{u}_{h,k},\widetilde{v}_{h,k})$ are continuous functions on $[0,+\infty)$, linear on each interval $(t_n,t_{n+1})$ and equal to $(u^n_h,{v}^n_h)$ at $t=t_n$, $n\geq 0$;
\item $({u}_{h,k},{v}_{h,k})$ are the piecewise constant functions taking values $(u^{n}_h,{v}^n_h)$ on $(t_{n-1},t_n]$, $n\geq 1$,
\end{itemize}
the following result holds: 
\begin{tma} {\bf (Convergence)}
There exist a subsequence $(k',h')$ of $(k,h)$, with $k',h'\downarrow 0$, and a weak-strong solution $(u,v)$ of (\ref{modelf00}) in $(0,+\infty)$, such that $(\widetilde{ u}_{h',k'}-m_0,\widetilde{v}_{h',k'}-m_0^2)$ and $(u_{h',k'}-m_0,v_{h',k'}-m_0^2)$  converge to $(u-m_0,v-m_0^2)$ weakly-$\star$ in $L^\infty(0,+\infty;L^2(\Omega)\times H^1(\Omega))$, weakly in $L^2(0,+\infty;H^1(\Omega)\times W^{1,6}(\Omega))$ and strongly in $L^2(0,T;L^2(\Omega)\times L^p(\Omega)) \cap C([0,T];H^1(\Omega)' \times L^q(\Omega))$, for any $T>0$, $1\leq p<+\infty$ and $1\leq q<6$. 
\end{tma}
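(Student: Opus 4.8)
The plan is to establish convergence of the numerical scheme to a weak-strong solution via the standard compactness-and-passage-to-the-limit strategy. The uniform estimates in $\eqref{weak01uv}$ are the starting capital: they give us bounds on $(\hat{u}^n_h,v^n_h)$ in $L^\infty(L^2)\times L^\infty(H^1)$ and $k\sum(\|\hat{u}^m_h\|_1^2+\|\hat{v}^m_h\|_{W^{1,6}}^2)\le C_0$, uniformly in $(k,h)$. Translated to the interpolants, these say that $(\widetilde{u}_{h,k}-m_0,\widetilde{v}_{h,k}-m_0^2)$ and the piecewise-constant versions are bounded in $L^\infty(0,+\infty;L^2\times H^1)$ and in $L^2(0,+\infty;H^1\times W^{1,6})$. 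This immediately yields, by Banach--Alaoglu, weak-$\star$ and weak limits along a subsequence $(k',h')$, furnishing the first two convergences claimed.

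**The core technical step** is to upgrade these weak convergences to the strong convergences in $L^2(0,T;L^2\times L^p)\cap C([0,T];(H^1)'\times L^q)$, and for this I would derive uniform bounds on the discrete time-derivatives so as to invoke an Aubin--Lions--Simon compactness argument. Concretely, I would test $\eqref{modelf02uv}_1$ and $\eqref{refvh}$ by arbitrary discrete functions and use the spatial estimates already in hand to bound $\delta_t u^n_h$ in (a discrete analogue of) $L^{q'}(0,T;(H^1)')$ and $\delta_t v^n_h$ in $L^2(0,T;L^2)$ — exactly the regularities demanded of $(\partial_t u,\partial_t v)$ in $\eqref{wsa-bis}$. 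The nonlinear advection term $(u^n_h\nabla v^n_h,\nabla\bar{u}_h)$ is controlled using the $L^\infty(L^2)$ bound on $u^n_h$ together with the $L^2(W^{1,6})$ bound on $v^n_h$ and a Hölder/Sobolev estimate; the production term $((u^n_h)^2,\bar v_h)$ is handled via the $L^2(H^1)\hookrightarrow L^2(L^6)$ bound on $u^n_h$, giving $(u^n_h)^2$ bounded in $L^1(L^3)$ or better. Aubin--Lions then gives strong $L^2(0,T;L^2)$ convergence of $u_{h,k}$ and strong $L^2(0,T;L^p)$ convergence of $v_{h,k}$ (for $p<+\infty$ using the $H^1$ interpolation bound), plus the $C([0,T];(H^1)'\times L^q)$ convergence by interpolating the time-derivative bounds against the spatial bounds.

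**Passing to the limit in the variational formulation** is then largely mechanical once the nonlinearities are controlled. The linear terms pass by weak convergence; the products $u^n_h\nabla v^n_h$ and $(u^n_h)^2$ pass by combining one strong and one weak factor — strong $L^2$ convergence of $u_{h,k}$ against weak $L^2$ convergence of $\nabla v_{h,k}$, and strong convergence of $u_{h,k}$ in a suitable $L^p$ to identify $(u_{h,k})^2\to u^2$. One must also check that the piecewise-constant and piecewise-linear interpolants share the same limit, which follows from $\|\widetilde{w}_{h,k}-w_{h,k}\|$ being controlled by $k\,\delta_t$-terms that vanish. Recovering the sign conditions $u\ge0$, $v\ge0$ in the limit requires care (cf.\ Remark \ref{OBSP}): positivity of $v$ follows from the limit equation $\eqref{wf02}$ by testing with $v_-$, while positivity of $u$ in $3D$ cannot be read off the scheme and must be argued or imposed exactly as in the continuous setting.

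**The main obstacle** I anticipate is recovering the energy inequality $\eqref{wsd}$ in the limit: the discrete energy law $\eqref{lawenerfydisceuv}$ carries the extra dissipative terms $\tfrac{k}{2}\|\delta_t\hat u^n_h\|_0^2+\tfrac{k}{4}\|\delta_t\nabla v^n_h\|_0^2$ and the term $\tfrac12\|(A_h-I)v^n_h\|_0^2$, and one must show that these pass correctly to the limit by weak lower semicontinuity of the norms, discarding the nonnegative $O(k)$ terms and identifying $\tfrac12\|(A_h-I)v^n_h\|_0^2$ with $\tfrac14\|\Delta v\|_0^2$ in the limit so as to reconstitute $\tfrac12\|\nabla v\|_1^2$ in $\eqref{wsd}$. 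The limit energy inequality holds with $\le$ rather than $=$ precisely because of this lower-semicontinuity argument, and verifying that the discrete operator $A_h$ converges to $I-\Delta$ in the appropriate weak sense — so that the regularized dissipation reproduces the full $\|\nabla v\|_1^2$ — is the delicate point. The remaining convergences and the initial-condition recovery follow by now-standard density and trace-in-time arguments, which is why, following the opening remark about mirroring \cite{FMD2}, the authors can legitimately present the statement with technical details omitted.
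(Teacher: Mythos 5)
Your overall route coincides with the paper's: the stability estimates (\ref{weak01uv}) give the weak-$\star$ and weak limits, the compactness/Aubin--Lions machinery (which the paper simply imports from Theorem 4.11 of \cite{FMD}, itself following \cite{Tem}) gives the strong convergences, time-derivative bounds and the passage to the limit in the nonlinear terms, and the energy inequality is recovered by weak lower semicontinuity after discarding the nonnegative $O(k)$ dissipation terms. You also correctly single out the crux: the term $\frac12\Vert (A_h-I)v^n_h\Vert_0^2$ in the discrete energy law (\ref{lawenerfydisceuv}) must, in the limit, reproduce the $\Delta v$ contribution to $\frac12\Vert\nabla v\Vert_1^2$ in (\ref{wsd}).

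However, at exactly that point your proposal stops at naming the difficulty (``verifying that $A_h$ converges to $I-\Delta$ in the appropriate weak sense \dots\ is the delicate point'') without giving an argument, and this is precisely the one nontrivial new ingredient of the paper's proof, namely inequality (\ref{uny}). The paper resolves it as follows: since $\{(A_h-I)v_{k',h'}\}$ is bounded in $L^2(0,T;L^2(\Omega))$, a subsequence converges weakly to some $w\in L^2(0,T;L^2(\Omega))$; the limit $v$ satisfies the chemical equation in two forms, $\partial_t v-\Delta v+v=u^2$ in $L^2(H^1)'$ (from the limit weak formulation, using $u^2\in L^2(L^{3/2})\hookrightarrow L^2(H^1)'$) and $\partial_t v+w+v=u^2$ (from the weak convergence above); comparing the two yields $w=-\Delta v$ in $\mathcal{D}'(\Omega)$. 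This identification does double duty: it proves $\Delta v\in L^2(0,T;L^2(\Omega))$, i.e.\ the $L^2(H^2)$ regularity demanded of a weak-strong solution in (\ref{wsa}) and needed for the pointwise equation (\ref{wf02}) --- a regularity your plan never establishes, since the scheme only controls $v$ in $L^\infty(H^1)\cap L^2(W^{1,6})$ and $H^2$ cannot be extracted from those bounds --- and it makes weak lower semicontinuity applicable to the correctly identified limit, yielding (\ref{uny}) and hence (\ref{wsd}). Without this step neither the claimed regularity of the limit nor the dissipation term in the energy inequality can be recovered, so the gap is essential rather than cosmetic. A minor slip, separately: $\frac12\Vert(A_h-I)v^n_h\Vert_0^2$ must be matched with $\frac12\Vert\Delta v\Vert_0^2$, not $\frac14\Vert\Delta v\Vert_0^2$, since together with $\frac12\Vert\nabla v^n_h\Vert_0^2$ it reconstitutes $\frac12\Vert\nabla v\Vert_1^2=\frac12\left(\Vert\nabla v\Vert_0^2+\Vert\Delta v\Vert_0^2\right)$.
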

\begin{obs}
Note that, since the positivity of $u^n_h$ cannot be assured, then the positivity of the limit function $u$ cannot be proven in the 3D case (see Remark \ref{OBSP}).
\end{obs}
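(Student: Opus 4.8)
The plan is to justify this Remark by examining the only two mechanisms that could yield positivity of the limit $u$ and showing that each is blocked in the 3D case. \emph{First mechanism: propagating a discrete positivity to the limit.} If one had $u^n_h \geq 0$ for all $n$, then, since the Convergence theorem provides $u_{h',k'}-m_0 \to u - m_0$ strongly in $L^2(0,T;L^2(\Omega))$ and the nonnegative cone is closed in $L^2$, one would immediately conclude $u \geq 0$ a.e. First I would point out that this route is unavailable because the scheme \textbf{UV} is a plain continuous-Galerkin discretization whose forms $(\nabla u^n_h,\nabla\bar u_h)$ and $(u^n_h\nabla v^n_h,\nabla\bar u_h)$ carry no discrete maximum principle (no mass-lumping, no upwinding, no $M$-matrix structure is built in, and the convective contribution destroys any monotonicity of the stiffness matrix). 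Hence $u^n_h$ may take negative values and its positivity cannot be assured; I would emphasize that this obstruction is structural and independent of the dimension.

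\emph{Second mechanism: testing the limit equation by the negative part.} This is exactly the route used for the continuous model in 1D/2D in Remark \ref{OBSP}: one takes $\bar u = u_- := \min\{u,0\}$ in (\ref{wf01}) and, using $(\nabla u,\nabla u_-)=\Vert \nabla u_-\Vert_0^2$ together with the repulsive sign of the chemotactic term, derives $\tfrac{d}{dt}\Vert u_-\Vert_0^2\leq 0$, whence $u_-\equiv 0$. The step I would scrutinize is the admissibility of $u_-$ as a test function. From the weak-strong regularity (\ref{wsa}) one has $u\in L^2(0,+\infty;H^1(\Omega))$, and truncation at zero preserves this (with $\nabla u_- = \nabla u\,\mathbf{1}_{\{u<0\}}$), so $u_- \in L^2(0,T;H^1(\Omega))$ and no better. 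However, (\ref{wf01}) only admits test functions in $L^q(0,T;H^1(\Omega))$, with $q=4$ in 3D (forced by $\partial_t u\in L^{4/3}(0,T;H^1(\Omega)')$ in (\ref{wsa-bis})); thus $u_-$ falls short of the admissible test space by a time-integrability gap, namely $L^2$ in time against the required $L^4$. In 1D and 2D one instead has $q=2$, so $u_- \in L^2(0,T;H^1(\Omega))=L^q(0,T;H^1(\Omega))$ is admissible and the argument closes, which is why positivity does hold there.

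The main obstacle --- in fact the entire content of the Remark --- is precisely this 3D time-integrability gap: the energy dissipation (\ref{lawenerfydisceuv}) controls $u$ only in $L^2(0,T;H^1)$, one power of time short of what the 3D variational formulation demands for $u_-$ to be a legitimate test function. Since neither the discrete route (no provable sign of $u^n_h$) nor the continuous-testing route (inadmissibility of $u_-$) is available, positivity of $u$ in 3D cannot be deduced from the scheme and would, exactly as for the continuous problem in Remark \ref{OBSP}, have to be imposed separately.
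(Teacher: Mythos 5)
Your proposal is correct and takes essentially the same approach as the paper: the Remark rests exactly on the two points you identify, namely that the scheme \textbf{UV} provides no sign guarantee for $u^n_h$ that could pass to the strong $L^2(0,T;L^2(\Omega))$ limit, and that the continuous argument of Remark \ref{OBSP} fails in 3D because $u_-=\min\{u,0\}$ lies only in $L^2(0,T;H^1(\Omega))$ whereas (\ref{wf01}) admits test functions in $L^q(0,T;H^1(\Omega))$ with $q=4$ in 3D (forced by $\partial_t u\in L^{4/3}(0,T;H^1(\Omega)')$ in (\ref{wsa-bis})). Your explicit $L^2$-versus-$L^4$ time-integrability gap simply spells out what the paper states as $u_-$ lacking ``the sufficient regularity'' to serve as a test function, so the two justifications coincide.
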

\begin{proof}
Proceeding as in Theorem 4.11 of \cite{FMD} (whose proof follows the arguments of \cite{Tem}), one can prove that there exist a subsequence $(k',h')$ of $(k,h)$, with $k',h'\downarrow 0$, and $(u,v)$ satisfying (\ref{wf01}), (\ref{wf02}) and the initial conditions (\ref{modelf00})$_4$, such that $(\widetilde u_{h',k'}-m_0,\widetilde{v}_{h',k'}-m_0^2)$ and $(u_{h',k'}-m_0,v_{h',k'}-m_0^2)$  converge to $(u-m_0,v-m_0)$ weakly-* in $L^\infty(0,+\infty;L^2(\Omega)\times H^1(\Omega))$, weakly in $L^2(0,+\infty;H^1(\Omega)\times W^{1,6}(\Omega))$ and strongly in $L^2(0,T;L^2(\Omega)\times L^p(\Omega)) \cap C([0,T];H^1(\Omega)' \times L^q(\Omega))$, for any $T>0$, $1\leq p<+\infty$ and $1\leq q<6$. Moreover, it holds
\begin{eqnarray*}
&\displaystyle\frac{d}{dt}\left( \frac{1}{2}\Vert \widetilde{u}_{k',h'}(t) \Vert_0^2 + \frac{1}{4} \Vert\nabla \widetilde{v}_{k',h'}(t)\Vert_0^2\right)&\!\!\! + \frac{(t_n - t)}{2}\Vert (\delta_t u_n ,\delta_t \nabla {v}_n) \Vert_0^2 \nonumber\\
&&\hspace{-2 cm} + \Vert \nabla u_{k',h'}(t)\Vert_{0}^{2} +
\displaystyle\frac{1}{2}\Vert (A_h - I) {v}_{k',h'}(t)\Vert_{0}^{2}+
\displaystyle\frac{1}{2}\Vert \nabla {v}_{k',h'}(t)\Vert_{0}^{2}= 0.
\end{eqnarray*}
In order to obtain that $(u,v)$ satisfies the energy inequality (\ref{wsd}), it is necessary to prove that 
\begin{equation}\label{uny}
\underset{(k',h')\rightarrow (0,0)}{\lim \mbox{inf} } \int_{t_0}^{t_1}  \Vert(A_h - I) v_{k',h'}(t)\Vert_{0}^{2} \geq \int_{t_0}^{t_1}  \Vert\Delta v(t)\Vert_{0}^{2}.
\end{equation}
Taking into account that $\{(A_h - I) v_{k',h'}\}$ is bounded in $L^2(0,T;L^2(\Omega))$, one has that there exists $w\in L^2(0,T;L^2(\Omega)$ such that for some subsequence of $(k',h')$, still denoted by $(k',h')$, 
\begin{equation}\label{ca3}
(A_h - I) v_{k',h'} \rightarrow w \ \ \mbox { weakly in } \ L^2(0,T;L^2(\Omega).
\end{equation}
Since $u^2\in L^2(0,T;L^{3/2}(\Omega))\hookrightarrow L^2(0,T;H^{1}(\Omega)')$, one has
\begin{equation}\label{ca4}
\partial_t v - \Delta v +v=u^2 \ \ \mbox{ in } L^2(H^1)',
\end{equation}
and, on the other hand, using (\ref{ca3}), one can deduce 
\begin{equation}\label{ca5}
\partial_t v +w +v=u^2 \ \ \mbox{ in } L^2(H^1)'.
\end{equation}
Thus, from (\ref{ca4})-(\ref{ca5}), one can deduce that $w=-\Delta v$ in $\mathcal{D}'(\Omega)$, which implies $-\Delta v \in L^2(0,T;L^2(\Omega))$ because of $w\in L^2(0,T;L^2(\Omega)$. Therefore, $(u,v)$ satisfies the regularity (\ref{wsa}) and taking into account (\ref{ca3}),  (\ref{uny}) is concluded. Finally, using (\ref{uny}) and arguing as in the last part of the proof of Theorem 4.11 of \cite{FMD}, it can be obtained that $(u,v)$ satisfies the energy inequality (\ref{wsd}), and therefore, $(u,v)$ is a weak-strong solution of (\ref{modelf00}).
\end{proof}

\subsection{Large-time behavior of the scheme UV}
In this subsection, exponential bounds for any solution  $(u_h^n,v_h^n)$ of the scheme \textbf{UV} in weak-strong norms are proved. 
\begin{tma}\label{CoUV}
Let $(u_h^n,v^n_h)$ be a solution of the scheme \textbf{UV} associated to an initial data $(u^{0}_h,v^0_h)$, with $\displaystyle\frac{1}{\vert\Omega\vert}\int_\Omega u^0_h = \displaystyle\frac{1}{\vert\Omega\vert}\int_\Omega u_0 = m_0$. Then,
\begin{equation}\label{LTuv1}
 \displaystyle\Vert (\hat{u}^n_h,\nabla v^n_h)\Vert_{0}^{2}\leq C_0(1+2K_p k)^{-n}, \ \ \forall n\geq 0,
\end{equation}
\begin{equation}\label{LTuv2}
 \displaystyle\Vert \hat{v}^n_h \Vert_{0}^{2}\leq 
 \left\{\begin{array}{l}
 C (1+ k)^{-n} \ \mbox{ if } 2K_p>1,\\
 C(kn) (1+ k)^{-n}  \ \mbox{ if } 2K_p=1,\\
C (1+ 2K_p k)^{-n}  \ \mbox{ if } 2K_p<1,
 \end{array}\right.
\end{equation}
where the constant $K_p>0$ was defined in Subsection \ref{subcc}.
\end{tma}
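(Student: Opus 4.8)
The plan is to reproduce, at the discrete level, the formal continuous computation of Subsection \ref{subcc}, replacing the time derivative by the backward difference $\delta_t$ and the Gr\"onwall/integration step by a discrete recursion.

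\textbf{Proof of (\ref{LTuv1}).} I would start from the discrete energy law (\ref{lawenerfydisceuv}) and set $E^n:=\Vert \hat u^n_h\Vert_0^2+\tfrac12\Vert\nabla v^n_h\Vert_0^2=2\,\mathcal{E}(\hat u^n_h,v^n_h)$, noting that $\nabla v^n_h=\nabla\hat v^n_h$. Discarding the three nonnegative terms $\tfrac{k}{2}\Vert\delta_t\hat u^n_h\Vert_0^2$, $\tfrac{k}{4}\Vert\delta_t\nabla v^n_h\Vert_0^2$ and $\tfrac12\Vert(A_h-I)v^n_h\Vert_0^2$, and using mass conservation (\ref{consuuv}) (which gives $\int_\Omega\hat u^n_h=0$, hence $\Vert\hat u^n_h\Vert_1^2=\Vert\nabla\hat u^n_h\Vert_0^2\geq C_p\Vert\hat u^n_h\Vert_0^2$ by Poincar\'e), the remaining dissipation is bounded below by $K_p E^n$ with $K_p=\min\{C_p,1\}$, exactly as in Subsection \ref{subcc}. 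Then (\ref{lawenerfydisceuv}) yields $\tfrac12\delta_t E^n+K_p E^n\leq 0$, i.e. $(1+2K_p k)E^n\leq E^{n-1}$, and a trivial induction gives $E^n\leq(1+2K_p k)^{-n}E^0$, from which (\ref{LTuv1}) follows using the uniform bound on the initial data.

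\textbf{The $\hat v$-recursion.} For (\ref{LTuv2}) I would test (\ref{vm0}) with $\bar v_h=\hat v^n_h$ (admissible since $1\in V_h$). The convexity inequality $(\delta_t\hat v^n_h,\hat v^n_h)\geq\tfrac12\delta_t\Vert\hat v^n_h\Vert_0^2$ together with $(A_h\hat v^n_h,\hat v^n_h)=\Vert\nabla\hat v^n_h\Vert_0^2+\Vert\hat v^n_h\Vert_0^2$ (from (\ref{discVh})) handles the left-hand side. For the right-hand side I would use, as in Subsection \ref{subcc}, the H\"older splitting $L^3$--$L^2$--$L^6$ with $H^1\hookrightarrow L^6$ and Young's inequality to absorb $\tfrac12\Vert\hat v^n_h\Vert_{H^1}^2$, and bound $\Vert\hat u^n_h+2m_0\Vert_{L^3}^2\leq C(1+\Vert\hat u^n_h\Vert_1^2)$; inserting (\ref{LTuv1}) for $\Vert\hat u^n_h\Vert_0^2$ gives $\delta_t\Vert\hat v^n_h\Vert_0^2+\Vert\hat v^n_h\Vert_0^2\leq C(1+\Vert\hat u^n_h\Vert_1^2)(1+2K_p k)^{-n}$. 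Writing $y^n=\Vert\hat v^n_h\Vert_0^2$, this reads $(1+k)y^n\leq y^{n-1}+g^n$ with $g^n=Ck(1+\Vert\hat u^n_h\Vert_1^2)(1+2K_p k)^{-n}$, whose solution is
\[
y^n\leq(1+k)^{-n}y^0+\sum_{m=1}^n(1+k)^{-(n-m+1)}g^m .
\]

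\textbf{The discrete convolution and the three cases.} The final and most delicate step is to estimate this discrete convolution, the exact analogue of (\ref{newaa})--(\ref{B001-a}). I would split $g^m$ into its constant part and its $\Vert\hat u^m_h\Vert_1^2$ part. The constant part produces a geometric sum with ratio $r=(1+k)/(1+2K_p k)$: summing explicitly gives $C(1+k)^{-n}$ when $r<1$ (i.e. $2K_p>1$), $C\,kn\,(1+k)^{-n}$ when $r=1$ (i.e. $2K_p=1$), and $C(1+2K_p k)^{-n}$ when $r>1$ (i.e. $2K_p<1$), the discrete counterpart of (\ref{B001}). For the $\Vert\hat u^m_h\Vert_1^2$ part I would invoke the global-in-time bound $k\sum_{m\geq1}\Vert\hat u^m_h\Vert_1^2\leq C_0$ contained in (\ref{weak01uv}): when $2K_p\geq1$ one has $(1+2K_p k)^{-m}\leq(1+k)^{-m}$, so the factor $(1+k)^{-(n+1)}$ pulls out and the bounded sum remains, giving $C(1+k)^{-n}$; when $2K_p<1$ one bounds the growing weight $r^m\leq r^n$ and again uses the bounded sum, recovering $C(1+2K_p k)^{-n}$, the discrete counterpart of (\ref{B001-a}). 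Collecting these with the $(1+k)^{-n}y^0$ term over the three regimes yields (\ref{LTuv2}).

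I expect this last step to be the main obstacle: unlike in the first step, the discrete Duhamel sum cannot be dominated by a single decaying geometric factor, and the regime $2K_p<1$ requires balancing a weight that grows in $m$ against the summability of $\{k\Vert\hat u^m_h\Vert_1^2\}$; keeping the three rates sharp, and matching the $kn$ correction at the borderline $2K_p=1$, is where the careful book-keeping lies.
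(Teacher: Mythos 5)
Your argument is correct and is essentially the paper's own proof: for (\ref{LTuv1}) the paper likewise combines the energy law (\ref{lawenerfydisceuv}) (re-derived as (\ref{Com1})) with mass conservation and Poincar\'e to get $(1+2K_pk)\,\mathcal{E}(\hat{u}^n_h,v^n_h)\leq \mathcal{E}(\hat{u}^{n-1}_h,v^{n-1}_h)$ and then telescopes, and for (\ref{LTuv2}) it tests (\ref{vm0}) with $\hat{v}^n_h$, forms the same discrete Duhamel sum (\ref{Newv1}), and carries out the identical three-case geometric-ratio analysis, bounding the $\Vert\hat{u}^m_h\Vert_1^2$ contribution via the global estimate (\ref{weak01uv}) exactly as you do in (\ref{Newv2})--(\ref{Newv4}). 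The only cosmetic difference is that you insert (\ref{LTuv1}) into the source term before summing while the paper does it during the case analysis, and both arguments share the same implicit restriction $t_n=nk>1$ in the borderline case $2K_p=1$.
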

\begin{proof}
Taking $\bar{u}_h=\hat{u}^n_h$ in (\ref{modelf02uv})$_1$, $\bar{v}_h= \displaystyle\frac{1}{2}(A_h -I) v^n_h$
in (\ref{refvh}) and using (\ref{consuuv}) and (\ref{discVh}), one obtains
\begin{eqnarray}\label{Com1}
&\delta_t \Big( \displaystyle
\frac{1}{2}\Vert \hat{u}^n_h\Vert_{0}^2 + \frac{1}{4}\Vert \nabla v^n_h\Vert_{0}^{2} \Big)&\!\!\!\!\!+ 
\frac{k}{2} 
\Vert \delta_t \hat{u}^n_h\Vert_{0}^2 +
\frac{k}{4} \Vert \delta_t \nabla v^n_h\Vert_{0}^2 \nonumber\\
&&\!\!\!
 + \Vert \hat{u}^n_h\Vert_{1}^{2} +
\displaystyle\frac{1}{2}\Vert (A_h-I) v^n_h\Vert_{0}^{2} +
\displaystyle\frac{1}{2}\Vert \nabla v^n_h\Vert_{0}^{2}=0.
\end{eqnarray}
To get (\ref{Com1}), the fact that $(u^n_h)^2 \in V_h$ is essential (which comes from the choice $(\mathbb{P}_m,\mathbb{P}_{2m})$ approximation for $(U_h,V_h)$) in order to cancel the terms $(u^n_h\nabla v^n_h,\nabla \hat{u}^n_h)$ and $-\frac{1}{2}((u^n_h)^2,(A_h -I) v^n_h)$. Then, from (\ref{Com1}) one arrives at
\begin{equation*}
(1+2K_p k) \Big( \displaystyle\Vert \hat{u}^n_h\Vert_{0}^2 + \frac{1}{2} \Vert \nabla v^n_h\Vert_{0}^{2} \Big) - \Big( \Vert \hat{u}^{n-1}_h\Vert_{0}^2 + \frac{1}{2} \Vert \nabla v^{n-1}_h\Vert_{0}^{2}  \Big) \leq 0,
\end{equation*}
from which, multiplying by $(1+2K_p k)^{n-1}$ and summing, one has for all $n\geq 0$,
\begin{equation}\label{e33}
 \displaystyle\Vert \hat{u}^n_h\Vert_{0}^2 + \frac{1}{2} \Vert \nabla v^n_h\Vert_{0}^{2}\leq (1+2K_p k)^{-n} \Big( \Vert \hat{u}^0_h\Vert_{0}^2 + \frac{1}{2} \Vert \nabla v^0_h\Vert_{0}^{2} \Big)  
\end{equation}
and (\ref{LTuv1}) is obtained. Moreover, taking $\bar{v}_h= \hat{v}^n_h$ in (\ref{vm0}), one has
$$
\frac{1}{2} \delta_t \Vert \hat{v}^n_h\Vert_0^2 + \Vert  \hat{v}^n_h\Vert_1^2 =\int_\Omega (\hat{u}^n_h + 2m_0)\hat{u}^n_h \hat{v}^n_h,
$$
which, using the H\"older and Young inequalities, implies that
\begin{equation}\label{Com4}
(1+ k) \Vert \hat{v}^n_h\Vert_0^2 - \Vert  \hat{v}^{n-1}_h\Vert_0^2\leq  kC \Vert \hat{u}^n_h +2m_0\Vert_{L^3}^2\Vert \hat{u}^n_h \Vert_0^2.
\end{equation}
Then, multiplying (\ref{Com4}) by $(1+ k)^{n-1}$, summing and using (\ref{LTuv1}), one obtains 
\begin{equation}\label{Newv1}
(1+ k)^{n}\Vert \hat{v}^n_h\Vert_0^2\leq  \Vert \hat{v}^0_h\Vert_0^2 +  \frac{C}{1+2K_pk}  k \underset{m=1}{\overset{n}{\sum}} \left(\frac{1 +k}{1+2K_pk}\right)^{m-1}(1 +\Vert \hat{u}^m_h\Vert^2_1).
\end{equation}
Then, in order to obtain (\ref{LTuv2}) we split the argument in three cases:
\begin{enumerate}
	\item Case 1: If $2K_p= 1$, using (\ref{weak01uv}) in (\ref{Newv1}) one has that for any $t_n=nk>1$,
\begin{equation}\label{Newv2}
\Vert \hat{v}^n_h\Vert_0^2\leq  (1+ k)^{-n} (C+ C(kn))\leq  C(kn) (1+ k)^{-n}.
\end{equation}

	\item Case 2: If $2K_p>1$, using (\ref{weak01uv}) in (\ref{Newv1}) one obtains
	\begin{equation}\label{Newv3} 
	\Vert \hat{v}^n_h\Vert_0^2\leq  (1+ k)^{-n}\!\left(C_0 + \frac{C}{2K_p-1}\!\left[1 \!-\!\left(\frac{1+k}{1+2K_pk}\right)^n\right] +\frac{C}{1+2K_pk} \right)\leq C(1+ k)^{-n}.
	\end{equation}
	
		\item Case 3: If $2K_p<1$, one rewrites (\ref{Newv1}) as
	\begin{equation*}
		(1+ 2K_p k)^{n}\Vert \hat{v}^n_h\Vert_0^2\leq  \left(\frac{1+ 2K_p k}{1+k}\right)^{n}\Vert \hat{v}^0_h\Vert_0^2 +  \frac{C}{1+2K_pk}  k \underset{m=1}{\overset{n}{\sum}} \left(\frac{1 +2K_p k}{1+k}\right)^{n-m+1}\!\!(1 +\Vert \hat{u}^m_h\Vert^2_1),
	\end{equation*}
		and proceeding as in (\ref{Newv3}), taking into account that $\frac{1+ 2K_p k}{1+k}<1$, one arrives at
	\begin{equation}\label{Newv4} 
	\Vert \hat{v}^n_h\Vert_0^2\leq C(1+2K_p k)^{-n}.
	\end{equation}
\end{enumerate}
Therefore, from (\ref{Newv2})-(\ref{Newv4}), (\ref{LTuv2}) is deduced.
\end{proof}

\begin{cor}
Under conditions of Theorem \ref{CoUV}, the following estimates hold
\begin{equation*}
\displaystyle\Vert (\hat{u}^n_h,\nabla v^n_h)\Vert_{0}^{2}\leq C_0e^{- \frac{2K_p}{1+2K_pk}kn}, \ \ \forall n\geq 0,
\end{equation*}
\begin{equation*}\label{LTuv2-cor}
\displaystyle\Vert \hat{v}^n_h \Vert_{0}^{2}\leq
 \left\{\begin{array}{l}
C e^{- \frac{1}{1+k}kn} \ \ \mbox{ if } 2K_p>1,\\
C(kn) e^{- \frac{1}{1+k}kn}  \ \mbox{ if } 2K_p=1,\\
C e^{- \frac{2K_p}{1+2K_p k}kn}   \ \mbox{ if } 2K_p<1.
\end{array}\right.
\end{equation*}
\end{cor}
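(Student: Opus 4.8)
The plan is to obtain both exponential bounds directly from the geometric-decay estimates (\ref{LTuv1}) and (\ref{LTuv2}) already proved in Theorem \ref{CoUV}, so that no new estimate on the scheme itself is needed. The only analytic ingredient I would use is the elementary scalar inequality
\begin{equation*}
(1+x)^{-1}\leq e^{-\frac{x}{1+x}}, \qquad \mbox{i.e.} \qquad \ln(1+x)\geq \frac{x}{1+x}, \qquad \forall x>0.
\end{equation*}
This follows from a one-variable calculus argument: setting $f(x):=\ln(1+x)-\frac{x}{1+x}$ one has $f(0)=0$ and $f'(x)=\frac{x}{(1+x)^2}\geq 0$ for $x\geq 0$, hence $f\geq 0$ on $[0,+\infty)$. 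Taking $n$-th powers then yields $(1+x)^{-n}\leq e^{-\frac{x}{1+x}n}$ for every integer $n\geq 0$.

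For the bound on $\Vert(\hat{u}^n_h,\nabla v^n_h)\Vert_0^2$ I would apply this inequality with $x=2K_pk>0$, obtaining $(1+2K_pk)^{-n}\leq e^{-\frac{2K_p}{1+2K_pk}kn}$; inserting this into (\ref{LTuv1}) gives the first assertion at once. For the bound on $\Vert\hat{v}^n_h\Vert_0^2$ I would carry out the same substitution in each of the three regimes of (\ref{LTuv2}). When $2K_p\geq 1$ the geometric factor is $(1+k)^{-n}$, and the inequality with $x=k$ gives $(1+k)^{-n}\leq e^{-\frac{1}{1+k}kn}$, turning $C(1+k)^{-n}$ and $C(kn)(1+k)^{-n}$ into the stated bounds (the polynomial prefactor $kn$ of the critical case $2K_p=1$ simply persists). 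When $2K_p<1$ the factor is $(1+2K_pk)^{-n}$, and the inequality with $x=2K_pk$ converts $C(1+2K_pk)^{-n}$ into $Ce^{-\frac{2K_p}{1+2K_pk}kn}$, which is the third line of the claim.

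Since each step is an elementary manipulation of scalar sequences, I do not expect any genuine difficulty; the substantive long-time content is already contained in Theorem \ref{CoUV}, and this corollary is essentially a cosmetic rewriting of the discrete rates $(1+ck)^{-n}$ as exponentials $e^{-\frac{c}{1+ck}kn}$. The single point demanding minimal care is to invoke the \emph{sharp} lower bound $\ln(1+x)\geq x/(1+x)$, rather than the more familiar $\ln(1+x)\leq x$ which points the wrong way, so that the exponents come out exactly as $\frac{2K_p}{1+2K_pk}$ and $\frac{1}{1+k}$ as asserted.
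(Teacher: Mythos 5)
Your proof is correct and follows essentially the same route as the paper: the paper writes $(1+2K_pk)^{-n} = \bigl(1-\tfrac{2K_p}{1+2K_pk}k\bigr)^{n}$ and applies $1-x\leq e^{-x}$, which is exactly equivalent to your inequality $\ln(1+x)\geq x/(1+x)$, and then (like you) disposes of the $\hat{v}^n_h$ bounds ``analogously'' by the same substitution in each of the three regimes. Your treatment is slightly more explicit about the critical case $2K_p=1$ and the persisting factor $kn$, but there is no substantive difference in method.
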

\begin{proof}
Using the inequality $1-x\leq e^{-x}$ for all $x\geq 0$, from (\ref{LTuv1}) one has
\begin{equation}\label{cnewu}
 \displaystyle\Vert (\hat{u}^n_h,\nabla v^n_h)\Vert_{0}^{2}\leq C_0(1+2K_p k)^{-n}\leq C_0\Big(1 - \frac{2K_p}{1+2K_p k}k\Big)^{n}\leq C_0e^{- \frac{2K_p}{1+2K_p k}kn}.
 \end{equation}
Analogously, (\ref{LTuv2}) can be deduced.

\end{proof}

\section{Scheme US$_\varepsilon$}\label{schemeUSe}
Up to our knowledge, there is not previous works studying FE schemes for model (\ref{modelf00}), with positive or approximately positive discrete solutions. In fact, for the scheme \textbf{UV} analyzed in this paper or the scheme \textbf{US} studied in \cite{FMD2}, it is not clear how to prove any of these properties. For this reason, in this section we propose an unconditionally energy-stable scheme with the property of ``approximated positivity''; this scheme is constructed as a modification of the scheme \textbf{US} (\cite{FMD2}), by introducing the auxiliary variable ${\boldsymbol{\sigma}}=\nabla v$ and applying a regularization procedure.

\

We consider a fully discrete approximation using FE in space and backward Euler in time for a reformulated problem in $(u,{\boldsymbol{\sigma}})$-variables. Moreover, in this case we will assume the following hypothesis on the space discretization:
\begin{enumerate}
	\item[({\bf H})]{The triangulation is structured in the sense that all simplices have a right angle.}
\end{enumerate}
We choose the following continuous FE spaces for $u$, ${\boldsymbol{\sigma}}$ and $v$:
$$ (U_h, {\boldsymbol\Sigma}_h, V_h) \subset H^1 \times \mathbf{H}^1_{\sigma}\times W^{1,6} \quad \hbox{generated by $\mathbb{P}_1$-continuous FE.}
$$
\begin{obs}
	The right angled requirement and the choice of $\mathbb{P}_1$-continuous FE for $U_h$ are nece\-ssa\-ry in order to obtain the relation (\ref{PL1}) below, which is essential in order to obtain the approximated positivity  (see Theorem \ref{AAPP} below).
\end{obs}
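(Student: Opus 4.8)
The plan is to read the relation \eqref{PL1} as a \emph{discrete monotonicity} inequality for the $\mathbb{P}_1$ Laplacian, of the form
\[
(\nabla u_h,\nabla I^h[g(u_h)])\ \ge\ 0
\]
for every nondecreasing $g$ (here $I^h$ denotes Lagrange interpolation onto $U_h$ and $u_h\in U_h$), and to trace this sign condition back to the geometry of the mesh. The remark then amounts to the claim that hypothesis (\textbf{H}) together with the use of $\mathbb{P}_1$ elements for $U_h$ are exactly what make the associated stiffness matrix an $M$-matrix, which is the algebraic fact behind \eqref{PL1}. Accordingly I would split the argument into a sufficiency part (these hypotheses imply \eqref{PL1}) and a discussion of why neither hypothesis can be dropped.

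For sufficiency, the first step is to pass to the nodal basis $\{\phi_i\}$ of $U_h$, writing $u_h=\sum_i u_i\phi_i$, so that with $a_{ij}:=(\nabla\phi_i,\nabla\phi_j)$ one gets
\[
(\nabla u_h,\nabla I^h[g(u_h)])=\sum_{i,j}a_{ij}\,u_i\,g(u_j).
\]
The second step is a purely algebraic identity: since $\sum_j\phi_j\equiv 1$ forces $\sum_j a_{ij}=0$, and since $a_{ij}=a_{ji}$, symmetrization gives
\[
\sum_{i,j}a_{ij}\,u_i\,g(u_j)=-\tfrac12\sum_{i,j}a_{ij}\,(u_i-u_j)\,(g(u_i)-g(u_j)).
\]
As $g$ is nondecreasing, each factor $(u_i-u_j)(g(u_i)-g(u_j))\ge 0$, so nonnegativity of the right-hand side is guaranteed precisely when $a_{ij}\le 0$ for all $i\ne j$. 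The third, geometric, step is exactly this sign: for $\mathbb{P}_1$ elements the elementwise off-diagonal entry $a_{ij}$ is a negative multiple of the cotangent of the angle opposite edge $ij$ (in $2D$), respectively of the cosine of the relevant dihedral angle (in $3D$), and the right-angled structure imposed by (\textbf{H}) keeps every simplex non-obtuse, forcing those cosines to be $\ge 0$ and hence $a_{ij}\le 0$. This is the content I would isolate as \eqref{PL1}; substituting it into the identity closes the sufficiency direction, and it is exactly this sign that Theorem~\ref{AAPP} exploits to control the negative part of $u_h^n$.

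Finally, for the necessity claims I would argue that each hypothesis is sharp, which is the more delicate point. Dropping the right angle: on a mesh containing an obtuse simplex the cotangent (or dihedral cosine) above changes sign, so $a_{ij}>0$ on the offending edge, and choosing $g$ together with nodal values activating only that edge makes the symmetrized sum negative, yielding a counterexample to \eqref{PL1}. Replacing $\mathbb{P}_1$ by $\mathbb{P}_m$ with $m\ge 2$: here both ingredients collapse, since $I^h[g(u_h)]$ is no longer monotonicity-preserving and the stiffness matrix loses its $M$-matrix structure even on acute meshes, so neither the clean telescoping identity nor the sign of the off-diagonal entries survives. I expect this necessity discussion to be the genuinely hard part: establishing that the hypotheses are \emph{necessary} rather than merely sufficient requires exhibiting explicit meshes and test data for which \eqref{PL1} fails, whereas the sufficiency direction is only the short $M$-matrix computation sketched above.
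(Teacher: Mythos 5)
There is a genuine gap, and it starts with a misreading of the target statement. Relation (\ref{PL1}) is not a discrete monotonicity \emph{inequality}; it is an exact pointwise \emph{identity}, $(\Lambda_\varepsilon u^h)\,\nabla \Pi^h (F'_\varepsilon(u^h))=\nabla u^h$, whose content is the existence of a piecewise constant, symmetric, positive definite matrix field $\Lambda_\varepsilon u^h$ satisfying the spectral bounds (\ref{D}). The paper (following Barrett--Blowey \cite{BB}) obtains this constructively, element by element: on a simplex with its right angle at a vertex $x_0$ and orthogonal edges towards the vertices $x_j$, one takes $\Lambda_\varepsilon u^h$ to have eigenvectors along those edge directions with eigenvalues given by the difference quotients $\bigl(u^h(x_j)-u^h(x_0)\bigr)/\bigl(F'_\varepsilon(u^h(x_j))-F'_\varepsilon(u^h(x_0))\bigr)$, which lie in $[\varepsilon,\varepsilon^{-1}]$ by the mean value theorem and (\ref{F2pE}). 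Here $\mathbb{P}_1$ is what makes $\nabla u^h$ and $\nabla \Pi^h(F'_\varepsilon(u^h))$ elementwise constant and determined by nodal differences along edges, and the right angle is what makes the edge directions an orthogonal frame, so that the matrix built this way is symmetric and the identity holds componentwise. Your $M$-matrix computation (off-diagonal stiffness entries $a_{ij}\le 0$ on non-obtuse meshes, symmetrization, monotone $g$) proves a different, classical fact; it never constructs $\Lambda_\varepsilon$, never yields (\ref{D}), and therefore never yields (\ref{PL1}).

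The distinction matters for the way (\ref{PL1}) is used in Theorem \ref{AAPP}. The scheme (\ref{modelf02a}) contains $\Lambda_\varepsilon(u^n_\varepsilon)$ explicitly in the chemotaxis term, and when one tests with $\bar u=\Pi^h(F'_\varepsilon(u^n_\varepsilon))$, two things are needed: the diffusion term must become $\int_\Omega (\nabla u^n_\varepsilon)^T \Lambda_\varepsilon^{-1}(u^n_\varepsilon)\nabla u^n_\varepsilon\,d\x \ge \varepsilon\Vert\nabla u^n_\varepsilon\Vert_0^2$ (this needs (\ref{D}), not just a sign), and the chemotaxis term must collapse \emph{exactly}, $(\Lambda_\varepsilon(u^n_\varepsilon)\boldsymbol{\sigma}^n_\varepsilon,\Lambda_\varepsilon^{-1}(u^n_\varepsilon)\nabla u^n_\varepsilon)=(\boldsymbol{\sigma}^n_\varepsilon,\nabla u^n_\varepsilon)$, using the symmetry of $\Lambda_\varepsilon$ together with the identity (\ref{PL1}); this is precisely how (\ref{I01a}) and then (\ref{deluvNN}) are obtained. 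A one-sided inequality of the type $(\nabla u_h,\nabla \Pi^h[g(u_h)])\ge 0$ gives you no handle on that cross term, so your argument could not be substituted into the proof of Theorem \ref{AAPP} even if completed. Finally, your ``necessity'' discussion (obtuse simplices breaking the $M$-matrix sign, higher-order elements breaking monotonicity of interpolation) is aimed at your reinterpreted inequality rather than at (\ref{PL1}); the paper itself does not prove necessity in any rigorous sense --- the remark records that the Barrett--Blowey construction of $\Lambda_\varepsilon$ relies on exactly these two hypotheses, and that is the claim your proposal needed to address.
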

We consider the Lagrange interpolation operator $\Pi^h: C(\overline{\Omega})\rightarrow U_h$, and we introduce the discrete semi-inner product on $C(\overline{\Omega})$ (which is an inner product in $U_h$) and its induced discrete seminorm (norm in $U_h$):
\begin{equation*}
(u_1,u_2)^h:=\int_\Omega \Pi^h (u_1 u_2), \   \vert u \vert_h=\sqrt{(u,u)^h}.
\end{equation*}
\begin{obs}\label{eqh2}
	In $U_h$, the norms $\vert \cdot\vert_h$ and $\Vert \cdot\Vert_0$ are equivalents uniformly with respect to $h$ (see \cite{PB}).
\end{obs}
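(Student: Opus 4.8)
The statement is the standard equivalence between the mass-lumped quasi-inner product and the genuine $L^2$ inner product on a $\mathbb{P}_1$ finite element space, so the plan is the usual local-to-reference-element scaling argument. First I would observe that both quantities decompose additively over the triangulation: since $\Pi^h(u^2)$ is the continuous piecewise $\mathbb{P}_1$ function whose value at each vertex $a$ is $u(a)^2$, its restriction to a simplex $K$ is simply the $\mathbb{P}_1$ interpolant of $u^2|_K$ at the vertices of $K$, and hence
\begin{equation*}
\vert u\vert_h^2=\sum_{K\in\mathcal{T}_h}\int_K \Pi^h(u^2), \qquad \Vert u\Vert_0^2=\sum_{K\in\mathcal{T}_h}\int_K u^2.
\end{equation*}
Thus it suffices to prove, uniformly in $K$, a two-sided bound $c_1\int_K u^2\le \int_K\Pi^h(u^2)\le c_2\int_K u^2$ for every $u\in U_h$, with constants $c_1,c_2>0$ independent of $K$ (and therefore of $h$).

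Second, I would transfer a single element $K$ to the reference simplex $\widehat K$ through its affine map $F_K$. Writing $\widehat u=u\circ F_K\in\mathbb{P}_1(\widehat K)$, the nodal values are preserved and $\mathbb{P}_1$-interpolation commutes with the affine pullback, so both $\int_K u^2$ and $\int_K\Pi^h(u^2)$ equal $\vert\det B_K\vert$ times the corresponding quantities on $\widehat K$. The common Jacobian factor cancels in the ratio, leaving me to compare the two quadratic forms $\widehat u\mapsto\int_{\widehat K}\widehat u^2$ and $\widehat u\mapsto\int_{\widehat K}\widehat\Pi(\widehat u^2)=\frac{\vert\widehat K\vert}{n+1}\sum_a \widehat u(\widehat a)^2$ on the finite-dimensional space $\mathbb{P}_1(\widehat K)$. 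Both are positive-definite (a $\mathbb{P}_1$ function is determined by its vertex data, whose squared values form a norm of that data), so equivalence of all norms on a finite-dimensional space furnishes constants $c_1,c_2$ depending only on $\widehat K$, i.e. only on the dimension $n$.

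Summing the element-wise bounds over $\mathcal{T}_h$ then yields $c_1\Vert u\Vert_0^2\le\vert u\vert_h^2\le c_2\Vert u\Vert_0^2$ with $c_1,c_2$ independent of $h$, which is the claim. There is essentially no serious obstacle here — the only points requiring care are checking that Lagrange interpolation at vertices genuinely commutes with the affine change of variables for $\mathbb{P}_1$ (so the Jacobian factor is truly common to both sides), and noting that shape-regularity is not even needed, since the scaling factor cancels exactly. In fact, rather than invoking the abstract equivalence one can compute everything explicitly from the barycentric formula $\int_K\lambda_i\lambda_j=\frac{\vert K\vert}{(n+1)(n+2)}(1+\delta_{ij})$: this gives $\int_K u^2=\frac{\vert K\vert}{(n+1)(n+2)}\big[(\sum_i u_i)^2+\sum_i u_i^2\big]$ against $\int_K\Pi^h(u^2)=\frac{\vert K\vert}{n+1}\sum_i u_i^2$, and diagonalizing the all-ones matrix produces the sharp bounds $\Vert u\Vert_0^2\le\vert u\vert_h^2\le(n+2)\Vert u\Vert_0^2$, making the $h$-independence completely transparent.
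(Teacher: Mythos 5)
Your proof is correct. There is nothing in the paper to compare it against: the remark is stated without proof and simply defers to \cite{PB}, so your write-up supplies precisely the standard argument that such a citation stands in for. Both halves of your reasoning check out. Interpolation at the vertices commutes with the affine pullback for $\mathbb{P}_1$, so the Jacobian factor really is common to $\int_K u^2$ and $\int_K \Pi^h(u^2)$ and cancels in the ratio; and your explicit barycentric computation is accurate: writing $u=\sum_i u_i\lambda_i$ on a simplex $K$, the formula $\int_K\lambda_i\lambda_j=\frac{\vert K\vert}{(n+1)(n+2)}(1+\delta_{ij})$ gives
\begin{equation*}
\int_K u^2=\frac{\vert K\vert}{(n+1)(n+2)}\Bigl[\Bigl(\sum_i u_i\Bigr)^2+\sum_i u_i^2\Bigr],
\qquad
\int_K \Pi^h(u^2)=\frac{\vert K\vert}{n+1}\sum_i u_i^2,
\end{equation*}
and since the eigenvalues of $I+J$ (with $J$ the all-ones matrix) are $1$ and $n+2$, one gets the sharp element-wise bounds, hence $\Vert u\Vert_0^2\le\vert u\vert_h^2\le(n+2)\Vert u\Vert_0^2$ after summing over $\mathcal{T}_h$. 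Two of your side observations are worth keeping: the constants are purely dimensional, so neither shape-regularity nor quasi-uniformity (which the paper assumes globally) is actually needed for this particular equivalence; and the argument is genuinely specific to $\mathbb{P}_1$ with vertex-based lumping, which is consistent with the paper, since the remark sits in Section 4 where $U_h$ is generated by $\mathbb{P}_1$-continuous FE --- for higher-order spaces the vertex-lumped form would not even be positive definite, so the restriction is not cosmetic.
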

We consider also the  $L^2$-projection $Q^h:L^2(\Omega)\rightarrow U_h$ given by
\begin{equation*}
(Q^h u,\bar{u})^h=(u,\bar{u}), \ \ \forall \bar{u}\in U_h,
\end{equation*}
the standard $L^2$-projection $\widetilde{Q}^h:L^2(\Omega)\rightarrow {\boldsymbol\Sigma}_h$. Moreover, following the ideas of Barrett and Blowey \cite{BB}, we consider the truncated function $\lambda_\varepsilon:\mathbb{R}\rightarrow [\varepsilon,\varepsilon^{-1}]$ (with $\varepsilon\in (0,1)$) given by
\begin{equation*}
\lambda_\varepsilon(s)\ := \ 
\left\{\begin{array}{lcl}
\varepsilon & \mbox{ if } & s\leq \varepsilon,\\
s & \mbox{ if } & \varepsilon\leq s\leq \varepsilon^{-1},\\
\varepsilon^{-1} & \mbox{ if } & s\geq \varepsilon^{-1}.
\end{array}\right. 
\end{equation*}
If we define 
\begin{equation} \label{F2pE}
F''_\varepsilon(s):= \frac{1}{\lambda_\varepsilon(s)},
\end{equation}
then, we can integrate twice in (\ref{F2pE}), imposing the conditions $F'_\varepsilon(1)=F_\varepsilon(1)=0$, and we obtain a convex function $F_\varepsilon: \mathbb{R}\rightarrow [0,+\infty)$, such that $F_\varepsilon \in  C^{2}(\mathbb{R})$. Even more, for $\varepsilon\in (0,e^{-2})$, it holds (see \cite{BB})
\begin{equation}\label{PNa}
F_\varepsilon(s)\geq \frac{\varepsilon}{2}s^2 - 2\ \ \forall s\geq 0 \ \  \mbox{  and } \ \ F_\varepsilon(s)\geq \frac{s^2}{2\varepsilon} \ \ \forall s\leq 0.
\end{equation}
Then, for each $\varepsilon\in (0,1)$ we consider the construction of the operator $\Lambda_\varepsilon: U_h\rightarrow L^\infty(\Omega)^{d\times d}$ given in  \cite{BB}, satisfying that $\Lambda_\varepsilon u^h$ is a piecewise constant matrix  for all $u^h\in U_h$, such that the following relation holds  
\begin{equation}\label{PL1}
(\Lambda_\varepsilon u^h) \nabla \Pi^h (F'_\varepsilon(u^h))=\nabla u^h \ \ \mbox{ in } \Omega.
\end{equation}
Basically, $\Lambda_\varepsilon u^h$ is a constant by elements symmetric and positive definite matrix such that (\ref{PL1}) holds by elements. We highlight that (\ref{PL1}) is satisfied due to the right angled constraint requirement ({\bf H}) and the choice of $\mathbb{P}_1$-continuous FE for $U_h$. We recall the result below concerning to $\Lambda_\varepsilon(\cdot)$ (see \cite[Lemma 2.1]{BB}).
\begin{lem}\label{lemconv}
	Let $\Vert \cdot \Vert$ denote the spectral norm on $\mathbb{R}^{d\times d}$. Then for any given $\varepsilon\in (0,1)$ the function $\Lambda_\varepsilon:U_h\rightarrow [L^\infty(\Omega)]^{d\times d}$ is continuous and satisfies
	\begin{equation}\label{D}
	\varepsilon \xi^T \xi \leq \xi^T \Lambda_\varepsilon(u^h) \xi \leq \varepsilon^{-1} \xi^T \xi, \ \ \forall \xi \in \mathbb{R}^d, \ \forall u^h\in U_h.
	\end{equation}
\end{lem}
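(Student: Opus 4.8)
The plan is to prove the statement element by element, exploiting the structural hypothesis (\textbf{H}). Fix a simplex $K$ of the triangulation. By (\textbf{H}) there is a vertex, say $P_0$, such that the edge vectors $e_i := P_i - P_0$, $i=1,\dots,d$, joining $P_0$ to the remaining vertices $P_1,\dots,P_d$ are mutually orthogonal; hence $\hat e_i := e_i/|e_i|$ form an orthonormal basis of $\mathbb{R}^d$ that depends only on $K$ and not on $u^h$. Since every function in $U_h$ is affine on $K$, directional derivatives along the edges reduce to vertex differences: with $u_i := u^h(P_i)$ one has $\nabla u^h \cdot e_i = u_i - u_0$, and, because $\Pi^h(F'_\varepsilon(u^h))$ is the affine interpolant taking the value $F'_\varepsilon(u_i)$ at $P_i$, also $\nabla \Pi^h(F'_\varepsilon(u^h)) \cdot e_i = F'_\varepsilon(u_i) - F'_\varepsilon(u_0)$.

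Next I would introduce the edgewise divided differences and read off the spectral bounds. Using the integral form of the mean value theorem together with the definition (\ref{F2pE}),
\begin{equation*}
F'_\varepsilon(u_i) - F'_\varepsilon(u_0) = a_i\,(u_i - u_0), \qquad a_i := \int_0^1 F''_\varepsilon\big(u_0 + t(u_i - u_0)\big)\,dt = \int_0^1 \frac{dt}{\lambda_\varepsilon(u_0 + t(u_i - u_0))}.
\end{equation*}
Because $\lambda_\varepsilon$ takes values in $[\varepsilon,\varepsilon^{-1}]$, the integrand lies in $[\varepsilon,\varepsilon^{-1}]$, so $\varepsilon \le a_i \le \varepsilon^{-1}$ for every $i$. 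I then define $\Lambda_\varepsilon u^h$ on $K$ to be the symmetric matrix that is diagonal in the orthonormal edge basis with eigenvalues $a_i^{-1}$, i.e.\ $\Lambda_\varepsilon u^h := \sum_{i=1}^d a_i^{-1}\,\hat e_i \hat e_i^T$, which is constant on $K$, symmetric and positive definite. A direct check gives (\ref{PL1}) elementwise: the $\hat e_i$-component of $\nabla \Pi^h(F'_\varepsilon(u^h))$ equals $a_i\,(\nabla u^h \cdot \hat e_i)$, so applying $\Lambda_\varepsilon u^h$ returns exactly $\nabla u^h$. Since all eigenvalues $a_i^{-1}$ lie in $[\varepsilon,\varepsilon^{-1}]$, the Rayleigh quotient of the symmetric matrix $\Lambda_\varepsilon u^h$ is trapped between $\varepsilon$ and $\varepsilon^{-1}$, which is precisely (\ref{D}); as this holds on each $K$ with the same constants, it holds uniformly in $\Omega$.

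Finally, for continuity of $\Lambda_\varepsilon : U_h \to [L^\infty(\Omega)]^{d\times d}$ I would note that the eigenframe $\{\hat e_i\}$ is fixed by the mesh, so it suffices to control the eigenvalues. The vertex values $u_i$ are continuous (indeed linear) functionals of $u^h$, and the map $(u_0,u_i)\mapsto a_i$ is continuous by the integral representation above, using that $F''_\varepsilon = 1/\lambda_\varepsilon$ is continuous and that the degenerate case $u_i = u_0$ is covered since the integral collapses to $F''_\varepsilon(u_0)$. Because $a_i \ge \varepsilon > 0$, the inverse $a_i^{-1}$ is continuous as well, so $u^h \mapsto \Lambda_\varepsilon u^h$ is continuous on each element, hence in $[L^\infty(\Omega)]^{d\times d}$.

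The main obstacle is the elementwise construction that makes (\ref{PL1}) hold exactly while keeping the sharp two-sided bound: it is the right-angle hypothesis (\textbf{H}) combined with $\mathbb{P}_1$ elements that lets the orthogonal edges serve simultaneously as an eigenbasis of $\Lambda_\varepsilon u^h$ and as the directions in which both $u^h$ and $\Pi^h(F'_\varepsilon(u^h))$ have gradients expressible through a single scalar divided difference per edge. Without orthogonality the simultaneous diagonalization breaks down, and one cannot guarantee both (\ref{PL1}) and the spectral estimate (\ref{D}) with the explicit constants $\varepsilon$ and $\varepsilon^{-1}$.
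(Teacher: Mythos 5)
Your proposal is correct and takes essentially the same approach as the paper's own justification: the paper does not prove this lemma in-text but defers to Lemma 2.1 of Barrett--Blowey \cite{BB}, whose argument is precisely your construction --- the orthogonal edge basis $\{\hat e_i\}$ supplied by hypothesis (\textbf{H}) and $\mathbb{P}_1$ elements, the divided differences $a_i=\int_0^1 F''_\varepsilon\bigl(u_0+t(u_i-u_0)\bigr)\,dt\in[\varepsilon,\varepsilon^{-1}]$, and the elementwise definition $\Lambda_\varepsilon u^h=\sum_{i=1}^d a_i^{-1}\hat e_i\hat e_i^T$, which yields (\ref{PL1}), the spectral bounds (\ref{D}), and continuity exactly as you argue.
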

 Then, the following first order in time, nonlinear and coupled scheme is considered: 
\begin{itemize}
	\item{\underline{\emph{Scheme \textbf{US}$_\varepsilon$}:}\\
		{\bf Initialization}: Let $(u^{0}_h,{\boldsymbol \sigma}_h^{0},{v}_h^{0})=(Q^h u_0,\widetilde{Q}^h (\nabla v_0),Q^h {v}_{0})\in  U_h\times {\boldsymbol\Sigma}_h\times V_h$. Then, $\displaystyle\frac{1}{\vert\Omega\vert}\int_\Omega u^0_h = \displaystyle\frac{1}{\vert\Omega\vert}\int_\Omega u_0 = m_0$, $u^0_h\geq 0$ and $v^0_h\geq 0$.\\
		{\bf Time step} n: Given $(u^{n-1}_\varepsilon,{\boldsymbol \sigma}^{n-1}_\varepsilon)\in  U_h\times {\boldsymbol\Sigma}_h$, compute $(u^{n}_\varepsilon,{\boldsymbol \sigma}^{n}_\varepsilon)\in  U_h \times {\boldsymbol\Sigma}_h$ solving
		\begin{equation}
		\left\{
		\begin{array}
		[c]{lll}%
		(\delta_t u^n_\varepsilon,\bar{u})^h + (\nabla u^n_\varepsilon,\nabla \bar{u}) + (\Lambda_\varepsilon (u^{n}_\varepsilon){\boldsymbol\sigma}^n_\varepsilon,\nabla \bar{u})= 0, \ \ \forall \bar{u}\in U_h,\\
			(\delta_t {\boldsymbol \sigma}^n_\varepsilon,\bar{\boldsymbol \sigma}) + ( B_h {\boldsymbol \sigma}^n_\varepsilon,\bar{\boldsymbol \sigma}) =
		2(\Lambda_\varepsilon (u^{n}_\varepsilon) \nabla  u^n_\varepsilon,\bar{\boldsymbol \sigma}),\ \ \forall
		\bar{\boldsymbol \sigma}\in \Sigma_h,
		\end{array}
		\right.  \label{modelf02a}
		\end{equation}}
	where the linear operator $B_h:{\boldsymbol\Sigma}_h \rightarrow {\boldsymbol\Sigma}_h$ is defined as
	$$( B_h {\boldsymbol \sigma}^n_\varepsilon,\bar{\boldsymbol \sigma}) =(\nabla \cdot {\boldsymbol \sigma}^n_\varepsilon,\nabla \cdot \bar{\boldsymbol\sigma}) + (\mbox{rot }{\boldsymbol \sigma}^n_\varepsilon,\mbox{rot }\bar{\boldsymbol\sigma}) + ({\boldsymbol \sigma}^n_\varepsilon,\bar{\boldsymbol\sigma}), \  \  \forall \bar{\boldsymbol\sigma} \in {\boldsymbol\Sigma}_h.$$
\end{itemize}
Once the scheme \textbf{US} is solved, given $v^{n-1}_\varepsilon\in V_h$, we can recover $v^n_\varepsilon=v^n_\varepsilon((u^n_\varepsilon)^2) \in V_h$ solving: 
\begin{equation}\label{edovfUSe}
(\delta_t v^n_\varepsilon,\bar{v})^h  +(\nabla v^n_\varepsilon,\nabla \bar{v}) + (v^n_\varepsilon,\bar{v})^h =((u^n_\varepsilon)^2,\bar{v}), \ \ \forall
\bar{v}\in V_h.
\end{equation}

\subsection{Mass-conservation, well-posedness and energy-stability}\label{ELus}
In this subsection, we are going to present some properties of the scheme \textbf{US}$_\varepsilon$, whose proofs follow the ideas of the scheme \textbf{US} studied in \cite{FMD2}. We highlight that these properties can be obtained independently of the choice of $\Lambda_\varepsilon(u^n_\varepsilon)$ approximating $u^n_\varepsilon$.

\
 
Since $\bar{u}=1\in U_h$ and $\bar{v}=1 \in V_h$, then the scheme \textbf{US}$_\varepsilon$ is conservative in $u^n_\varepsilon$, that is,
\begin{equation}\label{conu1N}
(u_\varepsilon^n,1)=(u^n_\varepsilon,1)^h= (u^{n-1}_\varepsilon,1)^h=\cdot\cdot\cdot= (u^{0}_h,1)^h=(u_h^0,1)=(Q^hu_0,1)=(u_0,1)=m_0\vert \Omega\vert,
\end{equation}
and also has the  behavior for $\int_\Omega v^n_\varepsilon$ given in \eqref{compv1uv} (with $u^n_\varepsilon$ and $v^n_\varepsilon$ instead of $u^n_h$ and $v^n_h$ respectively).

\

In the following results we stablish the well-posedness of problems (\ref{modelf02a}) and  (\ref{edovfUSe}).
\begin{tma} {\bf (Unconditional solvability and conditional uniqueness  of (\ref{modelf02a}))} 
	There exists at least one solution $(u^n_\varepsilon,{\boldsymbol\sigma}^n_\varepsilon)$  of scheme \textbf{US}$_\varepsilon$. Moreover, 	if $k\, f(h,\varepsilon)<1$ (where $f(h,\varepsilon)\uparrow +\infty$ when $h\downarrow 0$ or $\varepsilon\downarrow 0$), then the solution $(u^n_\varepsilon,{\boldsymbol \sigma}_\varepsilon^n)$ of the scheme \textbf{US}$_\varepsilon$ is unique.
\end{tma}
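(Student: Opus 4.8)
The plan is to read (\ref{modelf02a}), at a fixed time step $n$ with the data $(u^{n-1}_\varepsilon,\boldsymbol\sigma^{n-1}_\varepsilon)$ frozen, as a square nonlinear system on the finite-dimensional space $\mathbb{W}_h:=U_h\times\boldsymbol\Sigma_h$, obtaining existence from a Brouwer-type fixed point argument and leaving the smallness condition $k\,f(h,\varepsilon)<1$ exclusively for uniqueness. The two structural facts I would rely on are the symmetry and the spectral bounds (\ref{D}) of $\Lambda_\varepsilon$ from Lemma \ref{lemconv}, and its $\varepsilon$-dependent Lipschitz continuity coming from the construction of \cite{BB}.

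\textbf{Existence.} Equip $\mathbb{W}_h$ with $\langle(u,\boldsymbol\sigma),(\bar u,\bar{\boldsymbol\sigma})\rangle:=(u,\bar u)^h+\tfrac12(\boldsymbol\sigma,\bar{\boldsymbol\sigma})$ and define a continuous map $\mathcal{P}:\mathbb{W}_h\to\mathbb{W}_h$ encoding (\ref{modelf02a}) (with the $\boldsymbol\sigma$-equation scaled by $\tfrac12$), so that $\mathcal{P}(u,\boldsymbol\sigma)=0$ is equivalent to $(u,\boldsymbol\sigma)$ solving the scheme; continuity of $\mathcal{P}$ is immediate from Lemma \ref{lemconv} and the bilinearity of the remaining terms. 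Testing $\mathcal{P}(u,\boldsymbol\sigma)$ against $(u,\boldsymbol\sigma)$ itself, the cross terms $(\Lambda_\varepsilon(u)\boldsymbol\sigma,\nabla u)$ and $(\Lambda_\varepsilon(u)\nabla u,\boldsymbol\sigma)$ cancel by symmetry of $\Lambda_\varepsilon(u)$, leaving
\begin{equation*}
\langle \mathcal{P}(u,\boldsymbol\sigma),(u,\boldsymbol\sigma)\rangle \geq \tfrac12\big(|u|_h^2-|u^{n-1}_\varepsilon|_h^2\big) + k\|\nabla u\|_0^2 + \tfrac14\big(\|\boldsymbol\sigma\|_0^2-\|\boldsymbol\sigma^{n-1}_\varepsilon\|_0^2\big) + \tfrac{k}{2}(B_h\boldsymbol\sigma,\boldsymbol\sigma).
\end{equation*}
Since $B_h$ is positive, the right-hand side is nonnegative as soon as $|u|_h^2+\tfrac12\|\boldsymbol\sigma\|_0^2$ is large enough; hence $\langle\mathcal{P}(w),w\rangle\geq0$ on a sphere $\|w\|=\rho$ with $\rho$ sufficiently large, and the Brouwer corollary (as in \cite{FMD,Tem}) provides a zero $w=(u^n_\varepsilon,\boldsymbol\sigma^n_\varepsilon)$, i.e.\ a solution of (\ref{modelf02a}).

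\textbf{Uniqueness.} Let $(u_1,\boldsymbol\sigma_1)$ and $(u_2,\boldsymbol\sigma_2)$ solve (\ref{modelf02a}) at step $n$ for the same data and set $u:=u_1-u_2$, $\boldsymbol\sigma:=\boldsymbol\sigma_1-\boldsymbol\sigma_2$. Subtracting the equations, testing the first by $u$ and the second by $\tfrac12\boldsymbol\sigma$, adding, and writing $\Lambda_\varepsilon(u_1)\boldsymbol\sigma_1-\Lambda_\varepsilon(u_2)\boldsymbol\sigma_2=\Lambda_\varepsilon(u_1)\boldsymbol\sigma+(\Lambda_\varepsilon(u_1)-\Lambda_\varepsilon(u_2))\boldsymbol\sigma_2$ (and analogously for the $\nabla u$ products), the symmetric contributions cancel exactly as above and one is left with
\begin{equation*}
\tfrac1k|u|_h^2+\|\nabla u\|_0^2+\tfrac1{2k}\|\boldsymbol\sigma\|_0^2+\tfrac12(B_h\boldsymbol\sigma,\boldsymbol\sigma)= -\big((\Lambda_\varepsilon(u_1)-\Lambda_\varepsilon(u_2))\boldsymbol\sigma_2,\nabla u\big)+\big((\Lambda_\varepsilon(u_1)-\Lambda_\varepsilon(u_2))\nabla u_2,\boldsymbol\sigma\big).
\end{equation*}
I would then bound the two residual terms using the Lipschitz estimate $\|\Lambda_\varepsilon(u_1)-\Lambda_\varepsilon(u_2)\|_{L^\infty}\leq L_\varepsilon\|u\|_{L^\infty}$ (with $L_\varepsilon\uparrow+\infty$ as $\varepsilon\downarrow0$), the a priori bounds on $\|\boldsymbol\sigma_2\|_0$ and $\|\nabla u_2\|_0$ supplied by the stability of the scheme, the equivalence $|\cdot|_h\sim\|\cdot\|_0$ (Remark \ref{eqh2}), and an inverse inequality $\|u\|_{L^\infty}\leq C\,h^{-d/2}\|u\|_0$. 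Young's inequality absorbs the $\|\nabla u\|_0^2$ and $\|\boldsymbol\sigma\|_0^2$ factors into the left-hand side, while the surviving $\|u\|_0^2\sim|u|_h^2$ terms are dominated by $\tfrac1k|u|_h^2$ precisely when $k\,f(h,\varepsilon)<1$, with $f(h,\varepsilon)\sim C\,(L_\varepsilon\,h^{-d/2})^2$, which blows up as $h\downarrow0$ or $\varepsilon\downarrow0$; this forces $u=0$ and $\boldsymbol\sigma=0$.

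The main obstacle is the uniqueness step: unlike a genuinely monotone nonlinearity, the dependence of $\Lambda_\varepsilon$ on $u$ produces the two residual terms above, which cannot be signed, and dominating them by the coercive left-hand side is only possible after paying a factor involving both the Lipschitz constant of $\Lambda_\varepsilon$ (hence $\varepsilon$) and an inverse inequality (hence $h$). This is exactly the mechanism that renders uniqueness conditional through $k\,f(h,\varepsilon)<1$, whereas existence, resting only on the symmetric cancellation and the spectral bounds (\ref{D}), is unconditional.
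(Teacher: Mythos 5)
Your proposal is correct and follows essentially the same route as the paper, whose own ``proof'' is only a citation: solvability as in Theorem 3.13 and uniqueness as in Lemma 3.14 of \cite{GRR1}, and those cited proofs are precisely what you reconstruct --- a finite-dimensional fixed-point (Brouwer-type) argument made possible by the symmetric cancellation of the $\Lambda_\varepsilon$ cross terms for existence, and, for uniqueness, subtraction of two solutions estimated via the $\varepsilon$-dependent Lipschitz continuity of $\Lambda_\varepsilon$, inverse inequalities in $h$, and the stability bounds, yielding the condition $k\,f(h,\varepsilon)<1$. The only detail to polish is the bound on $\Vert \nabla u_2\Vert_0$: the global energy law gives it only with a factor $k^{-1/2}$ (or one uses an inverse inequality, paying $h^{-1}$), which slightly changes the explicit form of $f(h,\varepsilon)$ but not the required blow-up property as $h\downarrow 0$ or $\varepsilon\downarrow 0$.
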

\begin{proof}
The proof of solvability follows as in Theorem 3.13 (see also Theorem 4.6) of \cite{GRR1}, and the uniqueness
follows as in Lemma 3.14 (see also Lemma 4.7) of \cite{GRR1}.
\end{proof}

\begin{lem}{\bf (Well-posedness of (\ref{edovfUSe}))}
	Given $u^n_\varepsilon\in U_h$ and $v^{n-1}_\varepsilon\in V_h$, there exists a unique $v^n_\varepsilon \in V_h$ solution of (\ref{edovfUSe}).
\end{lem}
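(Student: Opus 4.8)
The plan is to recognize (\ref{edovfUSe}) as a square linear system posed on the finite-dimensional space $V_h$ and to establish its solvability by coercivity of the associated bilinear form. First I would expand $\delta_t v^n_\varepsilon = (v^n_\varepsilon - v^{n-1}_\varepsilon)/k$ and move the terms depending on the known data $u^n_\varepsilon$ and $v^{n-1}_\varepsilon$ to the right-hand side, rewriting the problem as: find $v^n_\varepsilon \in V_h$ such that
\begin{equation*}
a(v^n_\varepsilon, \bar v) = L(\bar v), \qquad \forall \bar v \in V_h,
\end{equation*}
with bilinear form $a(v, \bar v) := \frac{1}{k}(v, \bar v)^h + (\nabla v, \nabla \bar v) + (v, \bar v)^h$ and linear functional $L(\bar v) := ((u^n_\varepsilon)^2, \bar v) + \frac{1}{k}(v^{n-1}_\varepsilon, \bar v)^h$.

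Since $V_h$ is finite-dimensional, existence and uniqueness of $v^n_\varepsilon$ are equivalent, and both follow once $a(\cdot,\cdot)$ is shown to be coercive (positive definite) on $V_h$. Testing with $\bar v = v$ gives
\begin{equation*}
a(v,v) = \Big(\frac{1}{k} + 1\Big)\vert v\vert_h^2 + \Vert \nabla v\Vert_0^2.
\end{equation*}
The key point is that $\vert \cdot\vert_h$ is a genuine norm on $V_h$: since $\vert v\vert_h^2 = \int_\Omega \Pi^h (v^2)$ reduces to a positively weighted sum of the squared nodal values of $v$, it vanishes only when $v$ vanishes at all Lagrange nodes, hence only when $v=0$ (here both $U_h$ and $V_h$ are $\mathbb{P}_1$ over the same nodal set). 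Invoking the uniform equivalence $\vert \cdot\vert_h \sim \Vert \cdot\Vert_0$ from Remark \ref{eqh2}, together with the equivalent $H^1$-norm, one obtains $a(v,v)\geq C\Vert v\Vert_1^2$, so $a$ is coercive. Continuity of $a$ and boundedness of $L$ are immediate in finite dimension (note $(u^n_\varepsilon)^2 \in L^2(\Omega)$ because $u^n_\varepsilon$ is a continuous piecewise polynomial). The Lax--Milgram theorem then yields a unique solution $v^n_\varepsilon \in V_h$.

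The argument is essentially routine; the only genuinely substantive point — the one I would be most careful about — is verifying that the discrete inner product $(\cdot,\cdot)^h$, although constructed through the interpolation operator $\Pi^h$ onto $U_h$, still induces a positive-definite form when restricted to $V_h$. This is precisely where the common choice of $\mathbb{P}_1$-continuous FE for both $U_h$ and $V_h$ (and the coincidence of their nodal sets) enters; otherwise the discrete mass term would be only a seminorm and coercivity could fail. I would therefore record this norm property on $V_h$ explicitly before applying Lax--Milgram.
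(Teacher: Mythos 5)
Your proof is correct and follows exactly the route the paper takes: the paper's entire proof is the one-line statement that the result ``follows from Lax--Milgram theorem,'' and your argument is precisely that application, with the details (identification of the bilinear form, coercivity via the positive-definiteness of $(\cdot,\cdot)^h$ on the $\mathbb{P}_1$ space $V_h$ and the norm equivalence of Remark \ref{eqh2}, boundedness of the right-hand side) spelled out. Your closing caution about the discrete mass term being a genuine norm on $V_h$ is well placed and is indeed the only point requiring the $\mathbb{P}_1$ nodal structure shared by $U_h$ and $V_h$.
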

\begin{proof}
	The proof follows from Lax-Milgram theorem.
\end{proof}

\begin{defi}\label{enesf00}
	A numerical scheme with solution $(u^n_\varepsilon,{\boldsymbol{\sigma}}^n_\varepsilon)$ is called energy-stable  if the energy
	\begin{equation}\label{nueva-2}
	\widetilde{\mathcal{E}}(u,{\boldsymbol \sigma})=\frac{1}{2}\Vert u\Vert_{0}^2 + \frac{1}{4}\Vert {\boldsymbol
		\sigma}\Vert_{0}^{2}
	\end{equation}
	is time decreasing, that is, 
	\begin{equation}\label{stabf0200}
	\widetilde{\mathcal{E}}(u^n_\varepsilon,{\boldsymbol \sigma}^n_\varepsilon)\leq \widetilde{\mathcal{E}}(u^{n-1}_\varepsilon,{\boldsymbol \sigma}^{n-1}_\varepsilon), \ \ \forall n\geq 1.
	\end{equation}
\end{defi}

\begin{tma} {\bf (Unconditional stability)} \label{estinc1us}
	The scheme \textbf{US}$_\varepsilon$ is unconditionally energy stable with respect to the modified energy $\widetilde{\mathcal{E}}(u,{\boldsymbol\sigma})$ given in (\ref{nueva-2}). In fact, if $(u^n_\varepsilon,{\boldsymbol\sigma}^n_\varepsilon)$ is a solution of \textbf{US}$_\varepsilon$, then the following discrete energy law holds
	\begin{equation}\label{delus}
	\delta_t \widetilde{\mathcal{E}}(\hat{u}^n_\varepsilon,{\boldsymbol\sigma}^n_\varepsilon) +\frac{k}{2}\Vert \delta_t \hat{u}^n_\varepsilon\Vert_0^2 + \frac{k}{4} \Vert \delta_t  {\boldsymbol\sigma}^n_\varepsilon\Vert_0^2 + \Vert  \hat{u}^n_\varepsilon \Vert_1^2  +\frac{1}{2}\Vert  {\boldsymbol\sigma}^n_\varepsilon\Vert_1^2 \leq 0.
	\end{equation}
\end{tma}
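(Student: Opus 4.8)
The plan is to reproduce the energy argument used for scheme \textbf{US} in \cite{FMD2}: test the two equations of (\ref{modelf02a}) with suitable discrete functions and add them so that the coupling terms cancel. Concretely, I would take $\bar u=\hat u^n_\varepsilon:=u^n_\varepsilon-m_0$ in (\ref{modelf02a})$_1$ and $\bar{\boldsymbol\sigma}=\frac{1}{2}{\boldsymbol\sigma}^n_\varepsilon$ in (\ref{modelf02a})$_2$, and sum the resulting identities. Since $m_0$ is constant, one has $\delta_t u^n_\varepsilon=\delta_t\hat u^n_\varepsilon$ and $\nabla u^n_\varepsilon=\nabla\hat u^n_\varepsilon$; moreover, the $u$-conservation (\ref{conu1N}) gives $\int_\Omega\hat u^n_\varepsilon=0$, so the diffusion term becomes $\Vert\nabla u^n_\varepsilon\Vert_0^2=\Vert\hat u^n_\varepsilon\Vert_1^2$ via the equivalent $H^1$-norm. (Because the mass is preserved, $\widetilde{\mathcal E}(u^n_\varepsilon,{\boldsymbol\sigma}^n_\varepsilon)$ and $\widetilde{\mathcal E}(\hat u^n_\varepsilon,{\boldsymbol\sigma}^n_\varepsilon)$ differ by a constant, so their time increments coincide and it is harmless to work with $\hat u^n_\varepsilon$.)

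The crucial step is the cancellation of the two chemotaxis/production terms. From (\ref{modelf02a})$_1$ the coupling contributes $(\Lambda_\varepsilon(u^n_\varepsilon){\boldsymbol\sigma}^n_\varepsilon,\nabla\hat u^n_\varepsilon)=(\Lambda_\varepsilon(u^n_\varepsilon){\boldsymbol\sigma}^n_\varepsilon,\nabla u^n_\varepsilon)$, while (\ref{modelf02a})$_2$, tested with $\frac{1}{2}{\boldsymbol\sigma}^n_\varepsilon$, contributes $-(\Lambda_\varepsilon(u^n_\varepsilon)\nabla u^n_\varepsilon,{\boldsymbol\sigma}^n_\varepsilon)$ after moving its right-hand side to the left. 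Since the construction of $\Lambda_\varepsilon$ in \cite{BB} yields a piecewise-constant \emph{symmetric} matrix, these two terms are equal and cancel. I emphasize that, in contrast with the approximated-positivity proof, only the symmetry of $\Lambda_\varepsilon$ is needed here; neither the spectral bounds (\ref{D}) of Lemma \ref{lemconv} nor the key relation (\ref{PL1}) play any role in the energy estimate.

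For the remaining terms I would invoke the algebraic identity $(\delta_t a^n,a^n)=\frac{1}{2}\delta_t\Vert a^n\Vert^2+\frac{k}{2}\Vert\delta_t a^n\Vert^2$, valid in any inner product, applied with $(\cdot,\cdot)$ to ${\boldsymbol\sigma}^n_\varepsilon$ and with the discrete product $(\cdot,\cdot)^h$ to $\hat u^n_\varepsilon$; combined with the definition of $B_h$ and the equivalent norm (\ref{H1div}), the term $\frac{1}{2}(B_h{\boldsymbol\sigma}^n_\varepsilon,{\boldsymbol\sigma}^n_\varepsilon)$ becomes exactly $\frac{1}{2}\Vert{\boldsymbol\sigma}^n_\varepsilon\Vert_1^2$. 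The one point requiring care — and the main (mild) obstacle — is that the time-derivative and mass contributions of $u$ arise naturally in the discrete seminorm $\vert\cdot\vert_h$ rather than in $\Vert\cdot\Vert_0$, precisely because (\ref{modelf02a})$_1$ uses $(\cdot,\cdot)^h$. I would therefore first establish the exact discrete energy equality
\begin{equation*}
\frac{1}{2}\delta_t\vert\hat u^n_\varepsilon\vert_h^2+\frac{k}{2}\vert\delta_t\hat u^n_\varepsilon\vert_h^2+\Vert\hat u^n_\varepsilon\Vert_1^2+\frac{1}{4}\delta_t\Vert{\boldsymbol\sigma}^n_\varepsilon\Vert_0^2+\frac{k}{4}\Vert\delta_t{\boldsymbol\sigma}^n_\varepsilon\Vert_0^2+\frac{1}{2}\Vert{\boldsymbol\sigma}^n_\varepsilon\Vert_1^2=0,
\end{equation*}
and then use the uniform equivalence of $\vert\cdot\vert_h$ and $\Vert\cdot\Vert_0$ on $U_h$ (Remark \ref{eqh2}) to pass to the stated law (\ref{delus}) and to the monotonicity (\ref{stabf0200}) of $\widetilde{\mathcal E}$; the replacement of the exact equality by the inequality ``$\leq 0$'' is exactly where the norm-equivalence constants are absorbed. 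The global-in-time estimates then follow by multiplying by $k$ and summing, as in \cite{FMD2}.
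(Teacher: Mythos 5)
Your proposal is correct and takes essentially the same route as the paper's (one-line) proof: the same test functions $\bar u=\hat u^n_\varepsilon$ and $\bar{\boldsymbol\sigma}=\frac{1}{2}{\boldsymbol\sigma}^n_\varepsilon$, the same cancellation of the coupling terms via the symmetry of $\Lambda_\varepsilon(u^n_\varepsilon)$ (with the factor $2$ in (\ref{modelf02a})$_2$ absorbed by the $\frac{1}{2}$), and the same appeal to Remark \ref{eqh2} to pass from the exact mass-lumped $\vert\cdot\vert_h$-identity to the stated inequality (\ref{delus}). Your write-up is in fact more explicit than the paper's, correctly isolating the role of $u$-conservation (\ref{conu1N}), the norm (\ref{H1div}) for the $B_h$-term, and the mass-lumping issue as the only delicate point.
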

\begin{proof}
	Testing (\ref{modelf02a})$_1$ by $\bar{u}= \hat{u}^n_\varepsilon$, (\ref{modelf02a})$_2$ by $\bar{\boldsymbol\sigma}=\frac{1}{2}{\boldsymbol\sigma}^n_\varepsilon$ and adding, the terms
	$(\Lambda_\varepsilon (u^{n}_\varepsilon) \nabla  \hat{u}_\varepsilon^n,{\boldsymbol \sigma}^n_\varepsilon)$ cancel, and taking into account Remark \ref{eqh2},  (\ref{delus}) is obtained.
\end{proof}
From (\ref{delus}), multiplying by $k$ and summing, one can deduce the following global energy law:
\begin{cor} \label{welemUS} {\bf(Global energy law) }
	Assume that $(u_0,v_0)\in L^2(\Omega)\times H^1(\Omega)$. Let $(u^n_\varepsilon,{\boldsymbol\sigma}^n_\varepsilon)$ be any solution of scheme \textbf{US}$_\varepsilon$. Then, the following estimate holds
	\begin{equation*}\label{gest}
	\Vert (\hat{u}^n_\varepsilon, {\boldsymbol
		\sigma}^n_\varepsilon)\Vert_{0}^{2}
	+ k \underset{m=1}{\overset{n}{\sum}}\Vert (\hat{u}^m_\varepsilon, {\boldsymbol \sigma}^m_\varepsilon)\Vert_{1}^2\leq C_0, \ \ \ \forall n\geq 1.
	\end{equation*}
\end{cor}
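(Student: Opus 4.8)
The plan is to integrate the discrete energy law (\ref{delus}) in time by the standard telescoping argument. First I would multiply (\ref{delus}) by $k$; since $\delta_t a^n=(a^n-a^{n-1})/k$, the leading term becomes $\widetilde{\mathcal{E}}(\hat{u}^n_\varepsilon,{\boldsymbol\sigma}^n_\varepsilon)-\widetilde{\mathcal{E}}(\hat{u}^{n-1}_\varepsilon,{\boldsymbol\sigma}^{n-1}_\varepsilon)$, so that
\begin{align*}
&\widetilde{\mathcal{E}}(\hat{u}^n_\varepsilon,{\boldsymbol\sigma}^n_\varepsilon)-\widetilde{\mathcal{E}}(\hat{u}^{n-1}_\varepsilon,{\boldsymbol\sigma}^{n-1}_\varepsilon)+\frac{k^2}{2}\Vert \delta_t \hat{u}^n_\varepsilon\Vert_0^2+\frac{k^2}{4}\Vert \delta_t {\boldsymbol\sigma}^n_\varepsilon\Vert_0^2\\
&\qquad +k\Vert \hat{u}^n_\varepsilon\Vert_1^2+\frac{k}{2}\Vert {\boldsymbol\sigma}^n_\varepsilon\Vert_1^2\le 0.
\end{align*}

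Then I would sum this inequality over the indices from $1$ to $n$. The energy differences telescope, leaving $\widetilde{\mathcal{E}}(\hat{u}^n_\varepsilon,{\boldsymbol\sigma}^n_\varepsilon)$ against the initial energy $\widetilde{\mathcal{E}}(\hat{u}^{0}_\varepsilon,{\boldsymbol\sigma}^{0}_\varepsilon)$; the squared-difference terms $\tfrac{k^2}{2}\Vert\delta_t\hat{u}^m_\varepsilon\Vert_0^2+\tfrac{k^2}{4}\Vert\delta_t{\boldsymbol\sigma}^m_\varepsilon\Vert_0^2$ are nonnegative and may simply be discarded, yielding
\begin{equation*}
\widetilde{\mathcal{E}}(\hat{u}^n_\varepsilon,{\boldsymbol\sigma}^n_\varepsilon)+k\sum_{m=1}^n\left(\Vert \hat{u}^m_\varepsilon\Vert_1^2+\frac{1}{2}\Vert {\boldsymbol\sigma}^m_\varepsilon\Vert_1^2\right)\le \widetilde{\mathcal{E}}(\hat{u}^{0}_\varepsilon,{\boldsymbol\sigma}^{0}_\varepsilon).
\end{equation*}
Using the elementary bounds $\widetilde{\mathcal{E}}(u,{\boldsymbol\sigma})=\frac12\Vert u\Vert_0^2+\frac14\Vert{\boldsymbol\sigma}\Vert_0^2\ge\frac14\Vert(u,{\boldsymbol\sigma})\Vert_0^2$ and $\Vert\hat{u}^m_\varepsilon\Vert_1^2+\frac12\Vert{\boldsymbol\sigma}^m_\varepsilon\Vert_1^2\ge\frac12\Vert(\hat{u}^m_\varepsilon,{\boldsymbol\sigma}^m_\varepsilon)\Vert_1^2$, the left-hand side controls, up to a fixed multiplicative constant, exactly the quantity asserted in the statement.

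The only point requiring care is to bound the initial energy $\widetilde{\mathcal{E}}(\hat{u}^{0}_\varepsilon,{\boldsymbol\sigma}^{0}_\varepsilon)$ by a constant $C_0$ depending only on the data $(\Omega,u_0,v_0)$ and independent of $(h,k,\varepsilon)$. Here I would invoke the definition of the initialization $(u^0_h,{\boldsymbol\sigma}^0_h)=(Q^h u_0,\widetilde{Q}^h(\nabla v_0))$: the projections $Q^h$ and $\widetilde{Q}^h$ are $L^2$-stable uniformly in $h$ (for $Q^h$ one uses the uniform equivalence of $\vert\cdot\vert_h$ and $\Vert\cdot\Vert_0$ recalled in Remark \ref{eqh2}), so that $\Vert\hat{u}^0_\varepsilon\Vert_0\le C\Vert u_0-m_0\Vert_0$ and $\Vert{\boldsymbol\sigma}^0_\varepsilon\Vert_0\le\Vert\nabla v_0\Vert_0$. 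This gives $\widetilde{\mathcal{E}}(\hat{u}^0_\varepsilon,{\boldsymbol\sigma}^0_\varepsilon)\le C(\Vert u_0\Vert_0^2+\Vert\nabla v_0\Vert_0^2)=:C_0<+\infty$, which is finite precisely because $(u_0,v_0)\in L^2(\Omega)\times H^1(\Omega)$. Absorbing all the fixed multiplicative factors into $C_0$ then produces the stated estimate. The argument is entirely routine; I do not anticipate a genuine obstacle, the only mild subtlety being the verification of the uniform-in-$h$ stability of the initial projections.
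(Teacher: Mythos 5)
Your proposal is correct and takes essentially the same approach as the paper, which deduces the corollary in one line by multiplying the discrete energy law (\ref{delus}) by $k$ and summing telescopically. The additional details you supply (discarding the nonnegative $k^2$-terms and bounding the initial energy via the uniform $L^2$-stability of $Q^h$ and $\widetilde{Q}^h$) are precisely the routine content the paper leaves implicit in the constant $C_0$.
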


\subsection{Large-time behavior of scheme \textbf{US}$_\varepsilon$}

\begin{tma}\label{LTBusEPS}
	Let $(u^n_\varepsilon,{\boldsymbol \sigma}^n_\varepsilon)$ be any solution of the scheme \textbf{US}$_\varepsilon$. Then, the following estimate holds 
	\begin{equation}\label{LTusEPS}
	\displaystyle\Vert (\hat{u}^n_\varepsilon,{ \boldsymbol \sigma}^n_\varepsilon)\Vert_{0}^{2}\leq C_0e^{- \frac{2K_p}{1+2K_p k}kn}, \ \ \forall n\geq 0,
	\end{equation}
	where the constant $K_p>0$ was defined in Subsection  \ref{subcc}.
\end{tma}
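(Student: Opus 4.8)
The plan is to mimic the large-time argument already carried out for scheme \textbf{UV} in Theorem \ref{CoUV}, since the structure of scheme \textbf{US}$_\varepsilon$ is strictly analogous once the auxiliary variable ${\boldsymbol\sigma}$ replaces $\nabla v$. The starting point is the discrete energy law (\ref{delus}) from Theorem \ref{estinc1us}, which already provides the dissipation terms $\Vert \hat{u}^n_\varepsilon\Vert_1^2$ and $\frac{1}{2}\Vert {\boldsymbol\sigma}^n_\varepsilon\Vert_1^2$. The key observation is that the energy $\widetilde{\mathcal{E}}(\hat{u}^n_\varepsilon,{\boldsymbol\sigma}^n_\varepsilon)=\frac{1}{2}\Vert \hat{u}^n_\varepsilon\Vert_0^2+\frac{1}{4}\Vert{\boldsymbol\sigma}^n_\varepsilon\Vert_0^2$ can be controlled from below by the dissipation, exactly as in the continuous analysis of Subsection \ref{subcc}.

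First I would discard the nonnegative artificial-diffusion terms $\frac{k}{2}\Vert \delta_t \hat{u}^n_\varepsilon\Vert_0^2$ and $\frac{k}{4}\Vert \delta_t {\boldsymbol\sigma}^n_\varepsilon\Vert_0^2$ in (\ref{delus}) to obtain
\begin{equation*}
\delta_t \widetilde{\mathcal{E}}(\hat{u}^n_\varepsilon,{\boldsymbol\sigma}^n_\varepsilon) + \Vert \hat{u}^n_\varepsilon\Vert_1^2 + \frac{1}{2}\Vert {\boldsymbol\sigma}^n_\varepsilon\Vert_1^2 \leq 0.
\end{equation*}
Then I would invoke the Poincar\'e inequality $\Vert \nabla \hat{u}^n_\varepsilon\Vert_0^2 \geq C_p \Vert \hat{u}^n_\varepsilon\Vert_0^2$ (valid because $\hat{u}^n_\varepsilon$ has zero mean by mass-conservation (\ref{conu1N})), together with the trivial bound $\Vert {\boldsymbol\sigma}^n_\varepsilon\Vert_1^2 \geq \Vert {\boldsymbol\sigma}^n_\varepsilon\Vert_0^2$, to estimate the dissipation from below by $2K_p\,\widetilde{\mathcal{E}}(\hat{u}^n_\varepsilon,{\boldsymbol\sigma}^n_\varepsilon)$, with $K_p=\min\{C_p,1\}$ as in Subsection \ref{subcc}. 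This yields the recurrence
\begin{equation*}
(1+2K_p k)\,\widetilde{\mathcal{E}}(\hat{u}^n_\varepsilon,{\boldsymbol\sigma}^n_\varepsilon) \leq \widetilde{\mathcal{E}}(\hat{u}^{n-1}_\varepsilon,{\boldsymbol\sigma}^{n-1}_\varepsilon).
\end{equation*}

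Finally, multiplying by $(1+2K_p k)^{n-1}$ and telescoping (exactly the step leading to (\ref{e33}) in Theorem \ref{CoUV}) gives the geometric decay $\widetilde{\mathcal{E}}(\hat{u}^n_\varepsilon,{\boldsymbol\sigma}^n_\varepsilon) \leq (1+2K_p k)^{-n}\,\widetilde{\mathcal{E}}(\hat{u}^0_\varepsilon,{\boldsymbol\sigma}^0_\varepsilon)$, and the elementary inequality $1-x\leq e^{-x}$ converts the factor $(1+2K_p k)^{-n}$ into the exponential form $e^{-\frac{2K_p}{1+2K_p k}kn}$, as in (\ref{cnewu}). Since $\widetilde{\mathcal{E}}$ is equivalent to $\Vert(\hat{u}^n_\varepsilon,{\boldsymbol\sigma}^n_\varepsilon)\Vert_0^2$ up to constants, (\ref{LTusEPS}) follows. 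I expect this proof to be essentially routine: the only point requiring care is checking that the cross term $(\Lambda_\varepsilon(u^n_\varepsilon)\nabla \hat{u}^n_\varepsilon,{\boldsymbol\sigma}^n_\varepsilon)$ cancels when testing, but this cancellation is already embedded in the energy law (\ref{delus}), so no genuine obstacle remains. Unlike the \textbf{UV} case, there is no coupled equation for $\hat{v}^n_\varepsilon$ to track here, so the three-case split of Theorem \ref{CoUV} is not needed, and the statement is correspondingly simpler.
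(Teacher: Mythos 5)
Your proposal is correct and takes essentially the same approach as the paper: the paper's proof likewise obtains the energy identity by testing (\ref{modelf02a})$_1$ with $\hat{u}^n_\varepsilon$ and (\ref{modelf02a})$_2$ with $\frac{1}{2}{\boldsymbol\sigma}^n_\varepsilon$ (using (\ref{conu1N}) and Remark \ref{eqh2}, i.e., exactly the computation behind (\ref{delus})), and then applies the Poincar\'e--telescoping--exponential steps of (\ref{e33}) and (\ref{cnewu}). The only cosmetic difference is that you cite the already-established law (\ref{delus}) instead of re-deriving it, and you write out the intermediate recurrence explicitly.
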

\begin{proof}
	Taking $\bar{u}_h=\hat{u}^n_\varepsilon$ in (\ref{modelf02a})$_1$, $\bar{\boldsymbol \sigma}_h= \displaystyle\frac{1}{2}{\boldsymbol \sigma}^n_\varepsilon$
	in (\ref{modelf02a})$_2$ and using (\ref{conu1N}) as well as Remark \ref{eqh2}, one obtains
	\begin{equation*}
	\delta_t \Big( \displaystyle
	\frac{1}{2}\Vert \hat{u}^n_\varepsilon\Vert_{0}^2 + \frac{1}{4}\Vert{\boldsymbol \sigma}^n_\varepsilon\Vert_{0}^{2} \Big)+ 
	\frac{k}{2} 
	\Vert \delta_t \hat{u}^n_\varepsilon\Vert_{0}^2 +
	\frac{k}{4} \Vert \delta_t {\boldsymbol \sigma}^n_\varepsilon\Vert_{0}^2 
	+ \Vert \hat{u}^n_\varepsilon\Vert_{1}^{2} +
	\displaystyle\frac{1}{2}\Vert{ \boldsymbol \sigma}^n_\varepsilon\Vert_{1}^{2}=0.
	\end{equation*}
	Then,  proceeding as in (\ref{e33}) and (\ref{cnewu}), one arrives at (\ref{LTusEPS}).
\end{proof}
\begin{cor}
	Let $v^n_\varepsilon=v^n_\varepsilon((u^n_\varepsilon)^2)$ be a solution of (\ref{edovfUSe}). Then, it holds 
	\begin{equation*}
	\displaystyle\Vert \hat{v}^n_\varepsilon \Vert_{0}^{2}\leq
	\left\{\begin{array}{l}
	C e^{- \frac{1}{1+k}kn} \ \ \mbox{ if } 2K_p>1,\\
	C(kn) e^{- \frac{1}{1+k}kn}  \ \mbox{ if } 2K_p=1,\\
	C e^{- \frac{2K_p}{1+2K_p k}kn}   \ \mbox{ if } 2K_p<1,
	\end{array}\right.
	\end{equation*}
	where the constant $K_p>0$ was defined in Subsection  \ref{subcc}.
\end{cor}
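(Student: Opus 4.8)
The plan is to transcribe the $\hat v$-part of the proof of Theorem~\ref{CoUV} to the chemical equation (\ref{edovfUSe}), feeding in the exponential decay of $\hat u^n_\varepsilon$ already furnished by Theorem~\ref{LTBusEPS} and the summability from Corollary~\ref{welemUS}. First I would shift by the constant steady state, writing $\hat v^n_\varepsilon = v^n_\varepsilon - m_0^2$ in (\ref{edovfUSe}). Because $m_0^2$ is constant and, for $\mathbb{P}_1$-continuous $V_h$, the interpolation $\Pi^h$ reproduces exactly the product of a constant with an element of $V_h$, one has $(m_0^2,\bar v)^h=(m_0^2,\bar v)$; hence $m_0^2$ is an exact discrete steady state and every constant contribution cancels, leaving
\begin{equation*}
(\delta_t \hat v^n_\varepsilon,\bar v)^h + (\nabla \hat v^n_\varepsilon,\nabla \bar v) + (\hat v^n_\varepsilon,\bar v)^h = ((\hat u^n_\varepsilon + 2m_0)\hat u^n_\varepsilon,\bar v), \quad \forall \bar v\in V_h,
\end{equation*}
the exact analogue of (\ref{vm0}).

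Next I would take $\bar v=\hat v^n_\varepsilon$. The discrete time term produces $\tfrac12\delta_t|\hat v^n_\varepsilon|_h^2$ (after discarding the nonnegative $\tfrac{k}{2}$-contribution), while the diffusion and reaction terms give $\|\nabla \hat v^n_\varepsilon\|_0^2 + |\hat v^n_\varepsilon|_h^2$. For the right-hand side I would apply H\"older with exponents $(3,2,6)$, the embedding $H^1\hookrightarrow L^6$ and Young's inequality, and use Remark~\ref{eqh2} to pass between $\|\hat v^n_\varepsilon\|_0^2$ and $|\hat v^n_\varepsilon|_h^2$; choosing the Young parameter so as to retain exactly one half of the reaction term, and invoking $H^1\hookrightarrow L^3$ to write $\|\hat u^n_\varepsilon+2m_0\|_{L^3}^2\le C(1+\|\hat u^n_\varepsilon\|_1^2)$, I would obtain the recursion
\begin{equation*}
(1+k)\,|\hat v^n_\varepsilon|_h^2 - |\hat v^{n-1}_\varepsilon|_h^2 \le kC\,(1+\|\hat u^n_\varepsilon\|_1^2)\,\|\hat u^n_\varepsilon\|_0^2,
\end{equation*}
which is (\ref{Com4}) with $(\hat u^n_\varepsilon,\hat v^n_\varepsilon)$ in place of $(\hat u^n_h,\hat v^n_h)$.

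From this point the computation is identical to (\ref{Newv1})--(\ref{Newv4}). Inserting the power form $\|\hat u^n_\varepsilon\|_0^2\le C_0(1+2K_pk)^{-n}$ (the intermediate estimate behind Theorem~\ref{LTBusEPS}), multiplying by $(1+k)^{n-1}$ and summing gives
\begin{equation*}
(1+k)^n|\hat v^n_\varepsilon|_h^2 \le |\hat v^0_\varepsilon|_h^2 + \frac{C}{1+2K_pk}\,k\sum_{m=1}^n\left(\frac{1+k}{1+2K_pk}\right)^{m-1}\!\!(1+\|\hat u^m_\varepsilon\|_1^2).
\end{equation*}
Splitting into the cases $2K_p>1$, $2K_p=1$ and $2K_p<1$ and using $k\sum_{m=1}^n\|\hat u^m_\varepsilon\|_1^2\le C_0$ from Corollary~\ref{welemUS}, exactly as in (\ref{Newv2})--(\ref{Newv4}), yields the power-type bounds $C(1+k)^{-n}$, $C(kn)(1+k)^{-n}$ and $C(1+2K_pk)^{-n}$ respectively; the elementary inequality $(1+ck)^{-n}\le e^{-\frac{c}{1+ck}kn}$ used in (\ref{cnewu}) then converts these into the stated exponential estimates, after one final use of Remark~\ref{eqh2} to return from $|\hat v^n_\varepsilon|_h^2$ to $\|\hat v^n_\varepsilon\|_0^2$.

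The only genuinely new ingredient compared with Theorem~\ref{CoUV} is the presence of the discrete inner product $(\cdot,\cdot)^h$; accordingly, the points to check are that $m_0^2$ remains an exact steady state under $\Pi^h$ (so no spurious lower-order source survives the shift) and that the norm equivalence of Remark~\ref{eqh2}, together with the freedom in the Young parameter, can be arranged so that the decay coefficient is precisely $(1+k)$. This last adjustment is what fixes the threshold of the three cases at $2K_p=1$ and reproduces the rate $\tfrac{1}{1+k}$; everything else is a verbatim repetition of the scheme~\textbf{UV} argument.
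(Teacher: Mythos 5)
Your proposal is correct and follows essentially the same route as the paper, whose entire proof is the remark that the argument of Theorem~\ref{CoUV} carries over once Remark~\ref{eqh2} is used to handle the mass-lumped products; your writeup is a faithful expansion of exactly that plan (shift by $m_0^2$, the recursion (\ref{Com4}) in the $|\cdot|_h$ norm, the three-case summation (\ref{Newv1})--(\ref{Newv4}), and the conversion (\ref{cnewu}) to exponential form). The two points you flag -- that constants are exact discrete steady states under $\Pi^h$, and that the Young parameter together with the $h$-uniform norm equivalence lets one keep the coefficient exactly $(1+k)$ -- are precisely the details the paper leaves implicit, and your treatment of them is sound.
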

\begin{proof}
	The proof follows as in Theorem \ref{CoUV}; using, in this case, Remark \ref{eqh2}.
\end{proof}

\subsection{Positivity of $v^n_\varepsilon$ and approximated positivity of $u^n_\varepsilon$}
First, the positivity of the discrete chemical signal will be proved. For this, it will be essential that the interior angles of the triangles or tetrahedra be less than or equal to $\pi/2$. Since we impose  the right angled constraint (\textbf{H}), then this property holds. 
\begin{lem}{\bf (Positivity of $v^n_\varepsilon$)}
	Given $u^n_\varepsilon\in U_h$ and $v^{n-1}_\varepsilon\in V_h$, the unique $v^n_\varepsilon \in V_h$ solution of (\ref{edovfUSe}) satisfies $v^n_\varepsilon\geq 0$.
\end{lem}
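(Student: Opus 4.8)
The plan is to argue by induction on $n$, using a discrete maximum principle. For the base case, observe that the initialization $v^0_\varepsilon=Q^h v_0$ is nonnegative: testing the defining relation $(v^0_\varepsilon,\bar u)^h=(v_0,\bar u)$ with a nodal basis function and using that mass lumping diagonalizes $(\cdot,\cdot)^h$ gives each nodal value of $v^0_\varepsilon$ as a positive multiple of $(v_0,\varphi_j)\geq 0$. Then, assuming $v^{n-1}_\varepsilon\geq 0$, I would prove $v^n_\varepsilon\geq 0$. Let $\{\varphi_j\}$ denote the $\mathbb{P}_1$ nodal (hat) basis of $V_h$, write $v^n_\varepsilon=\sum_j v^n_j\varphi_j$, and let $x_{j_0}$ be a node where the nodal value is minimal, $v^n_{j_0}=\min_j v^n_j$. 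The goal is to show $v^n_{j_0}\geq 0$, since a $\mathbb{P}_1$ function with nonnegative nodal values is nonnegative.

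I would test (\ref{edovfUSe}) with the single hat function $\bar v=\varphi_{j_0}\in V_h$. Because $\varphi_{j_0}\geq 0$ and $(u^n_\varepsilon)^2\geq 0$ pointwise, the production term on the right satisfies $((u^n_\varepsilon)^2,\varphi_{j_0})\geq 0$. The mass-lumped terms simplify to nodal contributions: since $\Pi^h(v^n_\varepsilon\varphi_{j_0})=v^n_{j_0}\varphi_{j_0}$, one gets $(v^n_\varepsilon,\varphi_{j_0})^h=\beta_{j_0}\,v^n_{j_0}$ and $(\delta_t v^n_\varepsilon,\varphi_{j_0})^h=\tfrac{\beta_{j_0}}{k}(v^n_{j_0}-v^{n-1}_{j_0})$, where $\beta_{j_0}=\int_\Omega\varphi_{j_0}>0$. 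Rearranging the tested equation yields
\begin{equation*}
\beta_{j_0}\Big(\tfrac1k+1\Big)v^n_{j_0}
=\tfrac{\beta_{j_0}}{k}\,v^{n-1}_{j_0}
-(\nabla v^n_\varepsilon,\nabla\varphi_{j_0})
+((u^n_\varepsilon)^2,\varphi_{j_0}).
\end{equation*}
The first term on the right is nonnegative by the induction hypothesis, and the last is nonnegative as noted; so everything reduces to controlling the sign of the stiffness contribution $(\nabla v^n_\varepsilon,\nabla\varphi_{j_0})$.

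This stiffness term is where the hardest part, and the geometric hypothesis (\textbf{H}), enter. Writing $a_{ij}=(\nabla\varphi_i,\nabla\varphi_j)$ and using $\sum_i\varphi_i\equiv 1$ (so that $\sum_i a_{ij}=0$ and $a_{j_0 j_0}=-\sum_{i\neq j_0}a_{i j_0}$), I would rewrite $(\nabla v^n_\varepsilon,\nabla\varphi_{j_0})=\sum_{i\neq j_0}a_{i j_0}(v^n_i-v^n_{j_0})$. The non-obtuse (right-angled) constraint (\textbf{H}) on the $\mathbb{P}_1$ triangulation guarantees the M-matrix sign property $a_{i j_0}\leq 0$ for $i\neq j_0$, and since $x_{j_0}$ is a minimizing node we have $v^n_i-v^n_{j_0}\geq 0$; hence each summand is $\leq 0$ and $(\nabla v^n_\varepsilon,\nabla\varphi_{j_0})\leq 0$. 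Consequently the right-hand side of the displayed identity is nonnegative, forcing $v^n_{j_0}\geq 0$, which completes the inductive step and the proof. I expect the verification of the off-diagonal sign condition $a_{ij}\leq 0$ under (\textbf{H}) to be the main technical obstacle, as it is precisely the step that uses the mesh geometry and the choice of $\mathbb{P}_1$ elements; the mass-lumping simplifications, by contrast, are routine once the nodal structure of $(\cdot,\cdot)^h$ is exploited.
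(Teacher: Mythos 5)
Your argument is correct, but it is not the proof the paper gives, so a comparison is worthwhile. The paper's proof is the variational (energy-type) form of the discrete maximum principle: it tests (\ref{edovfUSe}) with the interpolated negative part $\bar v=\Pi^h(v^n_{\varepsilon-})$, where $v^n_{\varepsilon-}=\min\{v^n_\varepsilon,0\}$, and uses two facts, namely $(\nabla\Pi^h(v^n_{\varepsilon+}),\nabla\Pi^h(v^n_{\varepsilon-}))\geq 0$ (a consequence of the non-obtuse angles guaranteed by (\textbf{H})) and $(\Pi^h(v))^2\leq\Pi^h(v^2)$, to arrive at $\left(\tfrac1k+1\right)\Vert\Pi^h(v^n_{\varepsilon-})\Vert_0^2+\Vert\nabla\Pi^h(v^n_{\varepsilon-})\Vert_0^2\leq 0$, which forces $v^n_{\varepsilon-}$ to vanish at every node. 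Your proof is the nodal form of the same principle: you test with a single hat function $\varphi_{j_0}$ at a minimizing node and use the off-diagonal sign property $a_{ij}\leq 0$ together with the partition of unity $\sum_i\varphi_i\equiv 1$. The geometric ingredient is identical in the two routes: the property $a_{ij}\leq 0$ for $i\neq j$ is exactly what makes the paper's cross-term inequality true (the nodal supports of $v^n_{\varepsilon+}$ and $v^n_{\varepsilon-}$ are disjoint, so that inner product is a sum of terms $v_{+,i}\,v_{-,j}\,a_{ij}\geq 0$), so neither argument avoids hypothesis (\textbf{H}) or the $\mathbb{P}_1$ choice. What your version buys: it is more elementary (no need for the interpolation inequality $(\Pi^h v)^2\leq\Pi^h(v^2)$); it makes explicit the induction on $n$ and the hypothesis $v^{n-1}_\varepsilon\geq 0$, which the paper's proof uses only implicitly (its time-difference term $-(v^{n-1}_\varepsilon,\Pi^h(v^n_{\varepsilon-}))^h$ has the right sign only if $v^{n-1}_\varepsilon$ is nonnegative at the nodes, a hypothesis the lemma statement omits); and it verifies the base case $Q^h v_0\geq 0$, which the paper merely asserts in the initialization of the scheme. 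What the paper's version buys: it is shorter, and it is of the same variational flavor as the approximated-positivity proof of Theorem \ref{AAPP}, which likewise controls negative values through an integral functional (there built from $F_\varepsilon$) rather than node by node, so the two proofs of the section stay structurally parallel.
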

\begin{proof}
We define $v^n_{\varepsilon-}:=\min\{v^n_{\varepsilon},0\}$ and $v^n_{\varepsilon+}:=\max\{v^n_{\varepsilon},0\}$. Then, testing (\ref{edovfUSe}) by $\bar{v}=\Pi^h (v^n_{\varepsilon-}) \in V_h$, and taking into account that $(\nabla \Pi^h (v^n_{\varepsilon+}), \nabla \Pi^h (v^n_{\varepsilon-}))\geq 0$ (owing to the interior angles of the triangles or tetrahedra are less than or equal to $\pi/2$), and using that $(\Pi^h (v))^2\leq \Pi^h(v^2)$ for all $v\in C(\overline{\Omega})$, one has
	\begin{eqnarray*}
		&\displaystyle\Big(\frac{1}{k} + 1 \Big) \Vert \Pi^h (v^n_{\varepsilon-})\Vert_0^2&\!\!\! + \Vert \nabla \Pi^h (v^n_{\varepsilon-})\Vert_0^2 \leq 0,
	\end{eqnarray*}
	and the proof is concluded.
\end{proof}

Notice that the above properties were proved independently of the choice of $\Lambda_\varepsilon(u^n_\varepsilon)$ approximating $u^n_\varepsilon$.  Now, in order to obtain aproximated positivity for the discrete cell density $u^n_\varepsilon$, we need to consider $\Lambda_\varepsilon(u^n_\varepsilon)$ satisfying (\ref{PL1}) and (\ref{D}). The main idea in the proof is to get the following bound
\begin{equation}\label{pppu}
(F_\varepsilon(u^n_\varepsilon),1)^h\leq C
\end{equation}
which, following the ideas of Corallary 3.9 and Remark 3.12 of \cite{GRR1}, implies the estimate (\ref{UPosi}) below, from which one can deduce that $u^n_{\varepsilon -}\rightarrow 0$ as $\varepsilon\rightarrow 0$ in the $L^2(\Omega)$-norm.
\begin{tma}{\bf(Approximated positivity of $u^n_\varepsilon$) }\label{AAPP}
Let $(u^n_\varepsilon,{\boldsymbol\sigma}^n_\varepsilon)$ any solution of the scheme \textbf{US}$_\varepsilon$. If $\varepsilon\in (0,e^{-2})$,  the following estimate holds
\begin{equation}\label{UPosi}
\max_{n\geq 0} \Vert \Pi^h (u^n_{\varepsilon-})\Vert_0^2 \leq C_0\varepsilon,
\end{equation}
where the constant $C_0$ depends on the data $(\Omega,u_0, v_0)$, but is independent of $k,h,n$ and $\varepsilon$.
\end{tma}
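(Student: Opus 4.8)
The plan is to carry out the two-stage strategy announced around (\ref{pppu}): first prove the uniform entropy bound $(F_\varepsilon(u^n_\varepsilon),1)^h\le C$ with $C$ independent of $(k,h,n,\varepsilon)$, and then read off (\ref{UPosi}) from the coercivity property (\ref{PNa}). Since $F_\varepsilon\in C^2(\mathbb{R})$, the composition $F'_\varepsilon(u^n_\varepsilon)$ is continuous, so $\bar u=\Pi^h(F'_\varepsilon(u^n_\varepsilon))\in U_h$ is an admissible test function in (\ref{modelf02a})$_1$. I would insert this choice and estimate the three resulting terms separately.

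For the discrete time-derivative term I would exploit the nodal (mass-lumped) structure of $(\cdot,\cdot)^h$: evaluating at the Lagrange nodes and applying the scalar convexity inequality $F_\varepsilon(a)-F_\varepsilon(b)\le F'_\varepsilon(a)(a-b)$ nodewise gives the discrete chain-rule bound $(\delta_t u^n_\varepsilon,\Pi^h F'_\varepsilon(u^n_\varepsilon))^h\ge \delta_t (F_\varepsilon(u^n_\varepsilon),1)^h$. For the diffusion term, the algebraic identity (\ref{PL1}) yields $\nabla u^n_\varepsilon=(\Lambda_\varepsilon u^n_\varepsilon)\nabla\Pi^h F'_\varepsilon(u^n_\varepsilon)$, so that term equals $((\Lambda_\varepsilon u^n_\varepsilon)\nabla\Pi^h F'_\varepsilon(u^n_\varepsilon),\nabla\Pi^h F'_\varepsilon(u^n_\varepsilon))\ge 0$ by the lower bound in (\ref{D}) and may simply be discarded. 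The decisive step is the chemotaxis term: using the symmetry of $\Lambda_\varepsilon u^n_\varepsilon$ together with (\ref{PL1}) once more, I would rewrite $(\Lambda_\varepsilon(u^n_\varepsilon){\boldsymbol\sigma}^n_\varepsilon,\nabla\Pi^h F'_\varepsilon(u^n_\varepsilon))=({\boldsymbol\sigma}^n_\varepsilon,(\Lambda_\varepsilon u^n_\varepsilon)\nabla\Pi^h F'_\varepsilon(u^n_\varepsilon))=({\boldsymbol\sigma}^n_\varepsilon,\nabla u^n_\varepsilon)$. This rewriting is \emph{essential}: it removes every factor of $\varepsilon^{-1}$, whereas a crude bound through the upper eigenvalue estimate in (\ref{D}) would introduce an $\varepsilon^{-1}$ that destroys the final order in $\varepsilon$.

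Combining the three contributions gives $\delta_t (F_\varepsilon(u^n_\varepsilon),1)^h\le -({\boldsymbol\sigma}^n_\varepsilon,\nabla u^n_\varepsilon)\le \tfrac12\|{\boldsymbol\sigma}^n_\varepsilon\|_0^2+\tfrac12\|\nabla u^n_\varepsilon\|_0^2$ after Young's inequality. Multiplying by $k$ and summing in $n$, both sums on the right are controlled uniformly in $(k,h,n)$ by the global energy law of Corollary \ref{welemUS} (recall $\nabla u^m_\varepsilon=\nabla\hat u^m_\varepsilon$). It then remains to bound the initial contribution $(F_\varepsilon(u^0_h),1)^h$ independently of $\varepsilon$: since $u^0_h=Q^h u_0\ge 0$ has nodal values that are weighted averages of $u_0$ (hence controlled independently of $h$) and $F_\varepsilon$ stays bounded on compact subsets of $[0,+\infty)$ uniformly for small $\varepsilon$, this term is $\le C$, and (\ref{pppu}) follows. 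This is where the dependence of $C_0$ on the data $(\Omega,u_0,v_0)$ enters.

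Finally, to deduce (\ref{UPosi}), I would apply the second inequality in (\ref{PNa}), namely $F_\varepsilon(s)\ge s^2/(2\varepsilon)$ for $s\le 0$, together with $F_\varepsilon\ge 0$, nodewise; this gives $\frac{1}{2\varepsilon}\,|\Pi^h(u^n_{\varepsilon-})|_h^2\le (F_\varepsilon(u^n_\varepsilon),1)^h\le C$. The equivalence of $|\cdot|_h$ and $\|\cdot\|_0$ on $U_h$, uniform in $h$ (Remark \ref{eqh2}), then yields $\|\Pi^h(u^n_{\varepsilon-})\|_0^2\le C_0\varepsilon$ for every $n$, which is exactly (\ref{UPosi}). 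I expect the main obstacle to be arranging the algebra of the diffusion and chemotaxis terms through (\ref{PL1}) and the symmetry of $\Lambda_\varepsilon$ so that the estimate is genuinely $\varepsilon$-independent apart from the single explicit factor of $\varepsilon$; the nodal convexity identity for $(\cdot,\cdot)^h$ and the uniform bound on the initial entropy are the remaining technical points.
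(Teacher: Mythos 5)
Your proposal is correct and follows essentially the same route as the paper's proof: test (\ref{modelf02a})$_1$ with $\Pi^h(F'_\varepsilon(u^n_\varepsilon))$, use (\ref{PL1}) and the symmetry of $\Lambda_\varepsilon$ to turn the chemotaxis term into the $\varepsilon$-free quantity $({\boldsymbol\sigma}^n_\varepsilon,\nabla u^n_\varepsilon)$, apply the discrete convexity/Taylor chain rule to the time-derivative term, sum in time using Young's inequality and Corollary \ref{welemUS}, and conclude via (\ref{PNa})$_2$ together with the $h$-uniform equivalence of $\vert\cdot\vert_h$ and $\Vert\cdot\Vert_0$. The only differences are cosmetic: the paper keeps the nonnegative terms $\varepsilon\frac{k}{2}\Vert\delta_t u^n_\varepsilon\Vert_0^2$ and $\varepsilon\Vert\nabla u^n_\varepsilon\Vert_0^2$ on the left-hand side rather than discarding them, and for the initial entropy the cleaner $\varepsilon$-uniform justification is the quadratic growth bound $F_\varepsilon(s)\le C(1+s^2)$ for $s\ge 0$ combined with the $L^2$-stability of $Q^h$ (your compact-subsets phrasing tacitly assumes bounded nodal values, i.e.\ $u_0\in L^\infty$), a detail the paper itself delegates to \cite{GRR1}.
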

\begin{proof}
Testing (\ref{modelf02a})$_1$ by $\bar{u}= \Pi^h (F'_\varepsilon (u^n_\varepsilon))$ and taking into account that $\Lambda_\varepsilon(u^n_\varepsilon)$ is symmetric as well as (\ref{PL1}) (which implies that $\nabla \Pi^h (F'_\varepsilon (u^n_\varepsilon))=\Lambda_\varepsilon^{-1} (u^{n}_\varepsilon) \nabla u^n_\varepsilon$), one obtains
\begin{eqnarray}\label{I01a}
(\delta_t u^n_\varepsilon,\Pi^h (F'_\varepsilon (u^n_\varepsilon)))^h+ \int_\Omega (\nabla u^n_\varepsilon)^T\!\cdot\!\Lambda_\varepsilon^{-1} (u^{n}_\varepsilon)\!\cdot\! \nabla u^n_\varepsilon d\x  = -\int_\Omega {\boldsymbol{\sigma}^n_\varepsilon}\cdot \nabla u^n_\varepsilon d\x.
\end{eqnarray}
By using the Taylor formula and taking into account that $\Pi^h$ is linear and $F''_\varepsilon(s)\geq \varepsilon$ for all $s\in \mathbb{R}$, one has (following \cite[Theorem 3.8]{GRR1})
\begin{equation*}
(\delta_t u^n_\varepsilon,\Pi^h (F'_\varepsilon (u^n_\varepsilon)))^h \geq \delta_t (F_\varepsilon(u^n_\varepsilon),1)^h  + \varepsilon\frac{k}{2}\vert \delta_t u^n_\varepsilon \vert_h^2,
\end{equation*}
which, together with (\ref{D}), (\ref{I01a}) and Remark \ref{eqh2}, imply that 
\begin{equation}\label{deluvNN}
\delta_t (F_\varepsilon(u^n_\varepsilon),1)^h+ \varepsilon\frac{k}{2}\Vert \delta_t u^n_\varepsilon \Vert_0^2  + \varepsilon\Vert \nabla u^n_\varepsilon\Vert_0^2  \leq \frac{1}{2} \Vert \nabla u^n_\varepsilon\Vert_0^2 + \frac{1}{2} \Vert{\boldsymbol{\sigma}}^n_\varepsilon\Vert_0^2.
\end{equation}
Then, multiplying (\ref{deluvNN}) by $k$, adding for $n=1,\cdot\cdot\cdot,m$ and using Corollary \ref{welemUS}, one arrives at
\begin{equation*}
(F_\varepsilon(u^m_\varepsilon),1)^h \leq  (F_\varepsilon(u^0_h),1)^h+k \underset{n=1}{\overset{m}{\sum}}\left( \frac{1}{2} \Vert \nabla u^n_\varepsilon\Vert_0^2 + \frac{1}{2} \Vert{\boldsymbol{\sigma}}^n_\varepsilon\Vert_0^2\right) \leq C_0, 
\end{equation*}
where $C_0>0$ is a constant depending on the data $(\Omega, u_0, v_0)$, but independent of $k,h,n$ and $\varepsilon$. Thus, (\ref{pppu}) is obtained. Therefore,  if $\varepsilon\in (0,e^{-2})$, from (\ref{PNa})$_2$ and following the proof of Corollary 3.9 and Remark 3.12 of \cite{GRR1}, (\ref{UPosi}) is deduced.
\end{proof}

\section{Numerical Simulations}\label{NSi}
In this section we will compare the results of several numerical si\-mu\-lations that we have carried out using the schemes studied in the paper. 
We are considering 
 $(\mathbb{P}_1,\mathbb{P}_2)$-continuous approximation for $(u^n_h,  v_h^n)$. Moreover, we have chosen the domain $\Omega=[0,2]^2$ using a structured mesh, and all the simulations are carried out using $\textbf{FreeFem++}$ software.  We will also compare with the scheme \textbf{US} studied in \cite{FMD2}. We use Newton's method to approach the nonlinear schemes \textbf{US} and \textbf{UV}; while for the scheme \textbf{US}$_\varepsilon$, we use the following Picard method:\\

\begin{itemize}
	\item \underline{Picard method to approach a solution $(u^{n}_\varepsilon,{\boldsymbol{\sigma}}^{n}_\varepsilon)$ of the scheme \textbf{US}$_\varepsilon$}:\\
	{\bf Initialization ($l=0$):} Set $(u^{0}_\varepsilon,{\boldsymbol{\sigma}}^{0}_\varepsilon)=(u^{n-1}_\varepsilon,{\boldsymbol{\sigma}}^{n-1}_\varepsilon)\in  U_h\times {\boldsymbol{\Sigma}}_h$.\\
	{\bf Algorithm:} Given $(u^{l}_\varepsilon,{\boldsymbol{\sigma}}^{l}_\varepsilon)\in  U_h\times {\boldsymbol{\Sigma}}_h$, compute $(u^{l+1}_\varepsilon,{\boldsymbol{\sigma}}^{l+1}_\varepsilon)\in  U_h\times {\boldsymbol{\Sigma}}_h$ such that
	$$
	\left\{
	\begin{array}
	[c]{lll}%
	\frac{1}{k}(u^{l+1}_\varepsilon,\bar{u})^h + (\nabla u^{l+1}_\varepsilon,\nabla \bar{u}) = \frac{1}{k}(u^{n-1}_\varepsilon,\bar{u})^h -(\Lambda_\varepsilon (u^{l}_\varepsilon){\boldsymbol{\sigma}}^{l}_\varepsilon,\nabla \bar{u}), \ \ \forall \bar{u}\in U_h,\\
	\frac{1}{k}({\boldsymbol \sigma}^{l+1}_\varepsilon,\bar{\boldsymbol \sigma}) + (B{\boldsymbol \sigma}^{l+1}_\varepsilon,\bar{\boldsymbol \sigma}) =\frac{1}{k}({\boldsymbol \sigma}^{n-1}_\varepsilon,\bar{\boldsymbol \sigma}) +
	(\Lambda_\varepsilon (u^{l+1}_\varepsilon) \nabla  u^{l+1}_\varepsilon,\bar{\boldsymbol \sigma}),\ \ \forall
	\bar{\boldsymbol \sigma}\in \Sigma_h,
	\end{array}
	\right.  
	$$
	until the stopping criterion $\max\left\{\displaystyle\frac{\Vert
		u^{l+1} - u^{l}\Vert_{0}}{\Vert u^{l}\Vert_{0}},\displaystyle\frac{\Vert
		{\boldsymbol{\sigma}}^{l+1} - {\boldsymbol{\sigma}}^{l}\Vert_{0}}{\Vert
		{\boldsymbol{\sigma}}^{l}\Vert_{0}}\right\}\leq tol$. 
\end{itemize}
In all the cases, we consider $tol=10^{-4}$.

\subsection{Positivity}\label{SimPos}
The aim of this subsection is to compare the fully discrete schemes $\textbf{UV}$, $\textbf{US}$ and \textbf{US}$_\varepsilon$ in terms of positivity. 
Theoretically,  for all schemes, is not clear the positivity of the variable $u^n_h$. However, for the scheme \textbf{US}$_\varepsilon$, it was proved that $\Pi^h(u^n_{\varepsilon-})\rightarrow 0$ in $L^2(\Omega)$ as $\varepsilon\rightarrow 0$ (see Theorem \ref{AAPP}). For this reason, in Figure \ref{fig:PosiU1} we compare the positivity of the variable $u^n$ in the schemes, taking the spatial parameter $h=1/20$,  a small time step $k = 10^{-5}$ (in order to see the differences in the spatial approximations), and the initial conditions (see Figure \ref{fig:initcond1}): 
$$u_0\!\!=\!\!-10xy(2-x)(2-y)exp(-10(y-1)^2-10(x-1)^2)+10.0001$$
and 
$$v_0\!\!=\!\!100xy(2-x)(2-y)exp(-30(y-1)^2-30(x-1)^2)+0.0001.$$
\begin{figure}[h]
	\begin{center}
		{\includegraphics[height=0.4\linewidth]{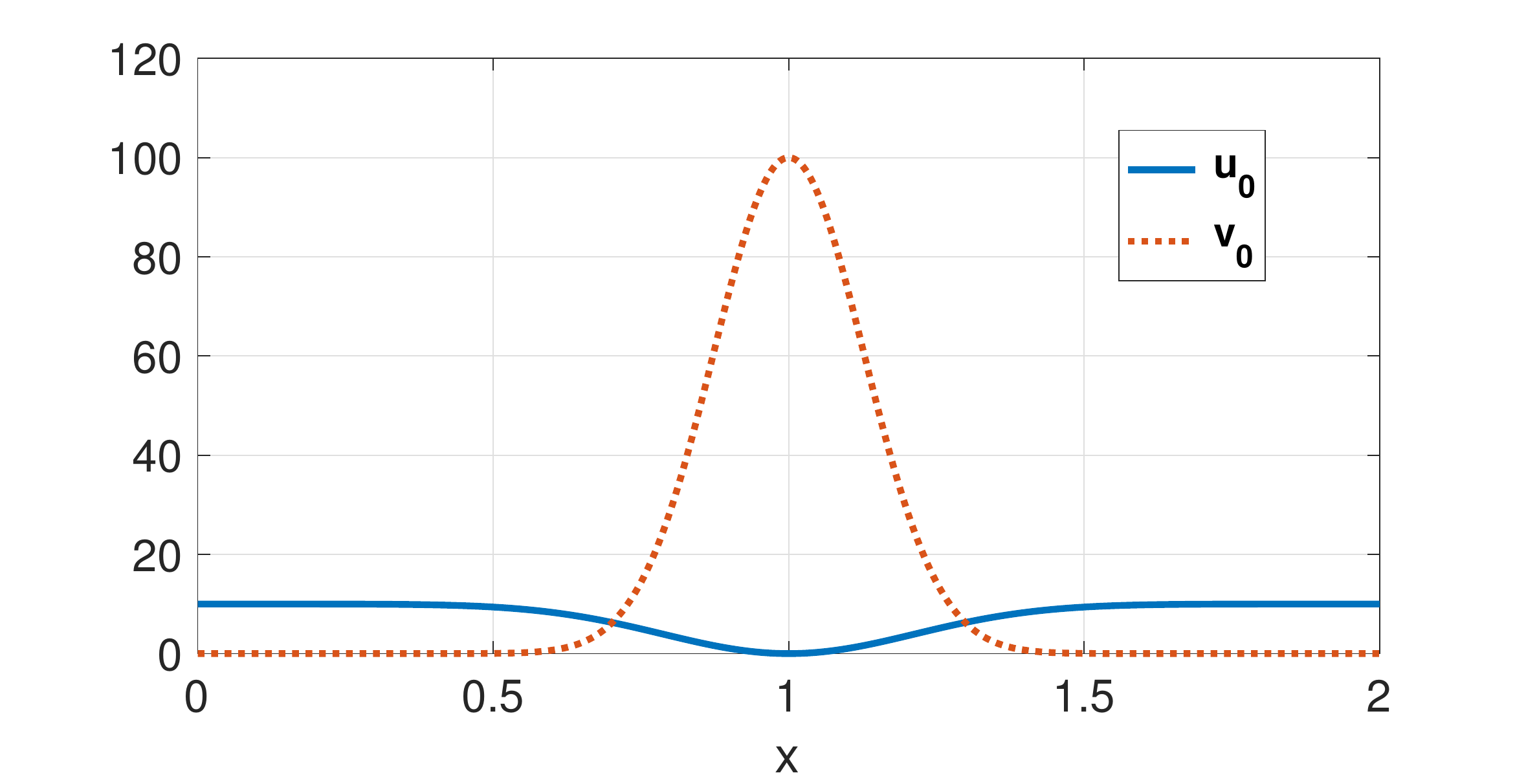}}
		\caption{Cross section at $y=1$ of the initial cell density $u_0$ and chemical concentration $v_0$.
			\label{fig:initcond1}}
	\end{center}
\end{figure}
Note that $u_0,v_0>0$ in $\Omega$, $\min (u_0)=u_0(1,1)=0.0001$ and $\max (v_0)=v_0(1,1)=100.0001$. We obtain that (see Figure \ref{fig:PosiU1}):
\begin{enumerate}
	\item In all schemes, the discrete cell density $u_h^n$ takes negative values for some $\x \in \Omega$ in some times $t_n>0$.
	\item In the scheme \textbf{US}$_\varepsilon$, the negative values of $u^n_{\varepsilon}$ are closer to $0$ as $\varepsilon \rightarrow 0$.
	\item The scheme \textbf{US}$_\varepsilon$ evidence ``better positivity'' than the schemes \textbf{UV} and \textbf{US}, because the ``greater'' negative values for \textbf{US}$_\varepsilon$ are of order $10^{-2}$, while the another schemes reach values greater than $-1$.
\end{enumerate}

\begin{figure}[htbp]
	\centering 
	\subfigure[Scheme \textbf{US}$_\varepsilon$]{\includegraphics[width=78mm]{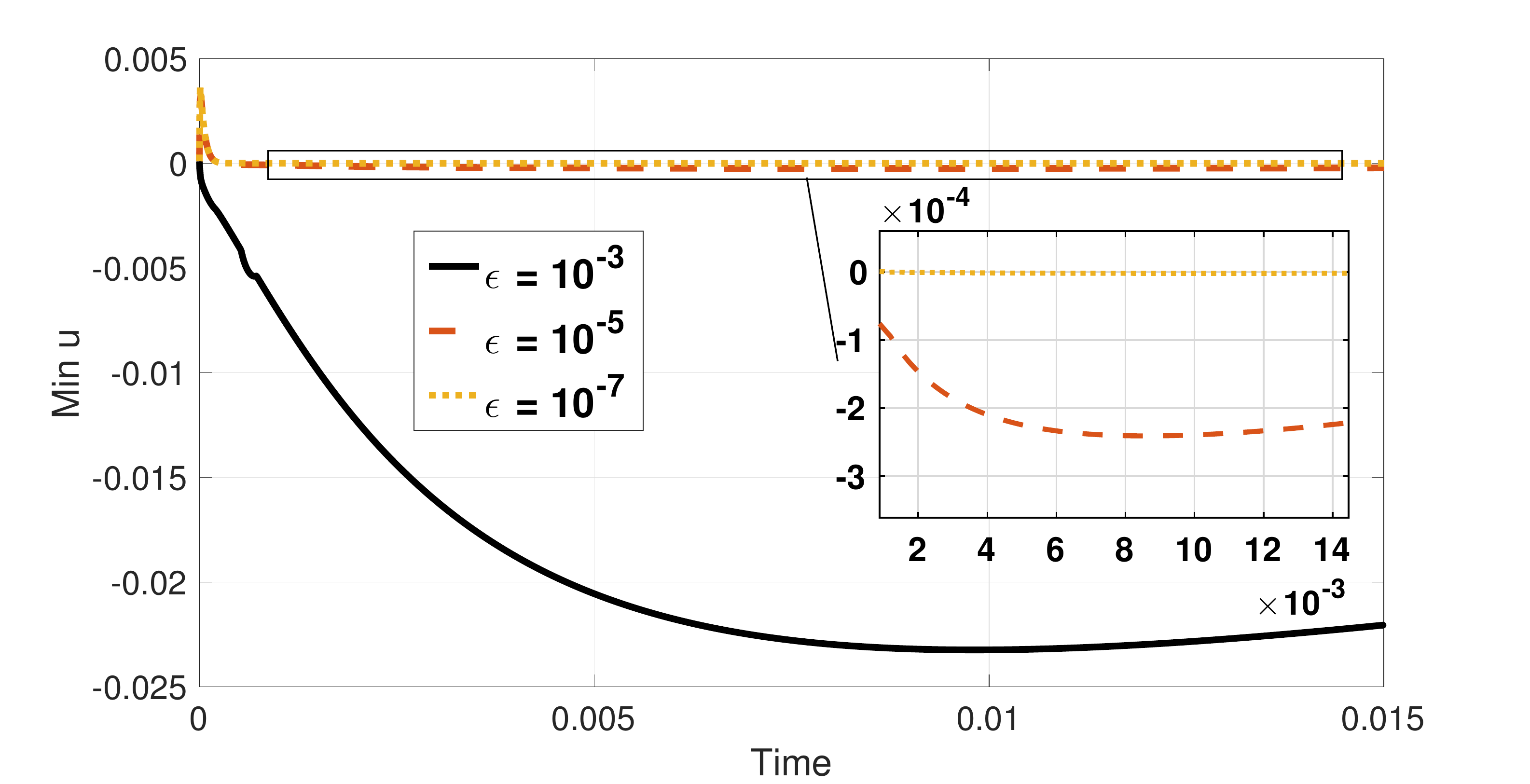}} \hspace{0.1 cm} 
	\subfigure[Schemes \textbf{UV} and \textbf{US}]{\includegraphics[width=78mm]{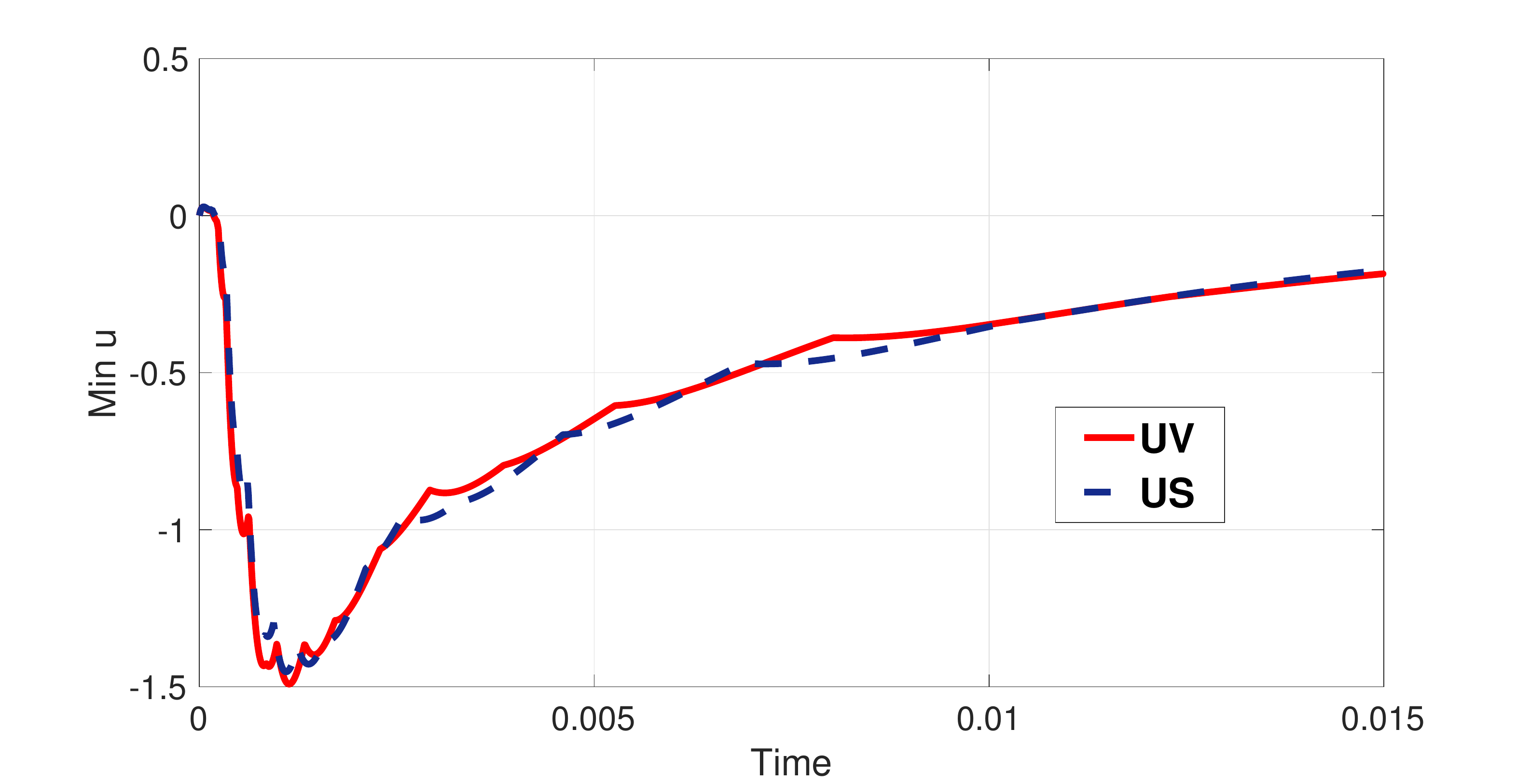}}
	\caption{Minimum values of $u^n_h$} \label{fig:PosiU1}
\end{figure}

\subsection{$\varepsilon$-aproximated positivity vs spurious oscillations}
In this subsection, we present some numerical experiments relating the results concerning to the negativity of the discrete cell density observed in Subsection \ref{SimPos} with the spurious oscillations that could appear. With this aim,  we consider $k=10^{-5}$, $h=\frac{1}{25}$, $\varepsilon=10^{-6}$ (for the scheme \textbf{US}$_\varepsilon$) and the following initial conditions:
$$u_0=5cos(2\pi x)cos(2\pi y)+5.0001 \ \ \mbox{and} \ \ 
v_0=-170cos(2 \pi x)cos(2 \pi y))+170.0001$$
in which, the places with the highest chemical concentration have lower cell density, in order to force to the cell density  to be very close to zero. Note that $u_0,v_0>0$ in $\Omega$, $\min (u_0)=u_0(1,1)=0.0001$ and $\max (v_0)=v_0(1,1)=170.0001$. 

\

We observe that, in the case of the schemes \textbf{UV} and \textbf{US}, some spuriuos oscillations appear when the discrete cell density takes negative values (which makes simulations unreliable in this ``extreme'' case); while, in the case of the scheme \textbf{US}$_\varepsilon$, the $\varepsilon$-aproximated positivity favors the non-appearance of spurious oscillations (see Figure \ref{fig:NC1}).

\begin{minipage}{\textwidth}
	\begin{tabular}{p{1.15cm} p{4.3cm}p{4.3cm} p{4.3cm}} 
		Time & \hspace{0.9 cm} Scheme \textbf{UV}&\hspace{0.9 cm}	Scheme \textbf{US} & \hspace{0.9 cm} 	Scheme \textbf{US}$_\varepsilon$\\[1mm]
		\scriptsize{t=0}&	\includegraphics[width=44mm]{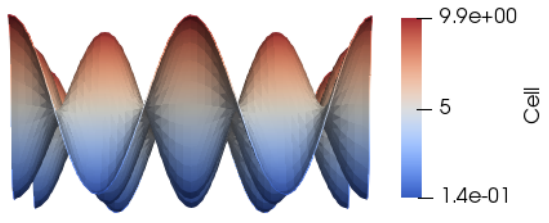} & \hspace{-2mm} \includegraphics[width=44mm]{UV1} & \hspace{-2mm} \includegraphics[width=44mm]{UV1} \\[2mm]
		\scriptsize{t=7e-5}&	\includegraphics[width=44mm]{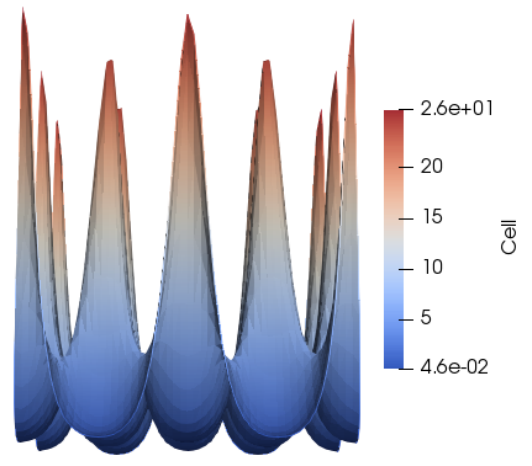} & \includegraphics[width=43mm]{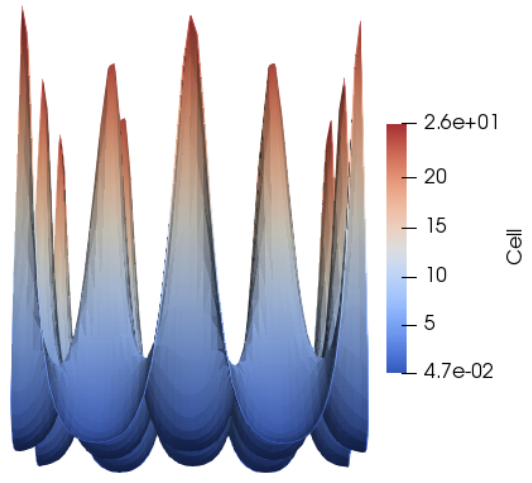} & \includegraphics[width=44mm]{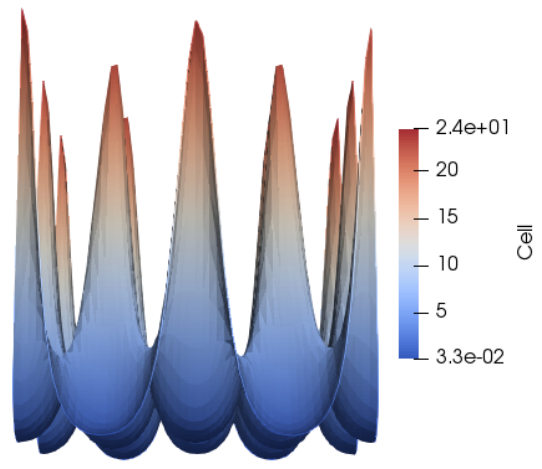}  
		\\[2mm]
		\scriptsize{t=2.4e-4}&	\includegraphics[width=44mm]{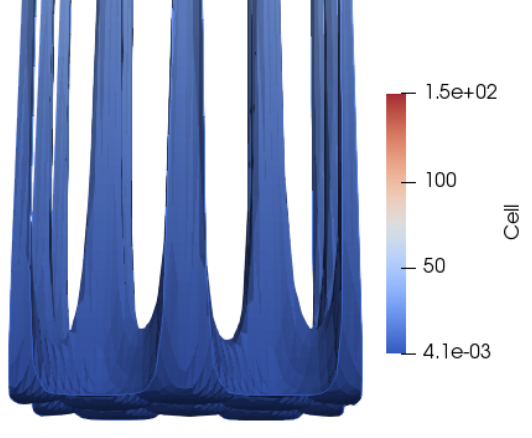} & \includegraphics[width=44mm]{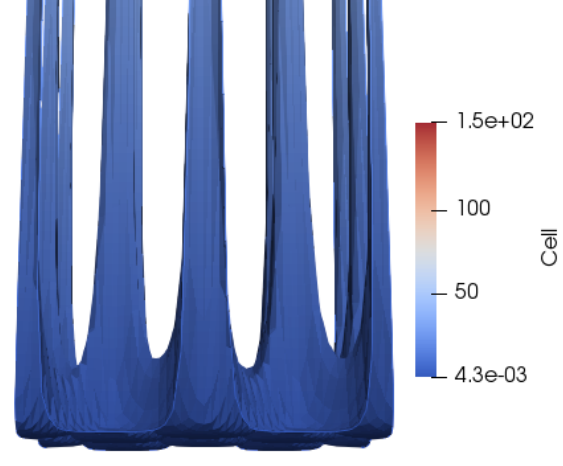} & \includegraphics[width=44mm]{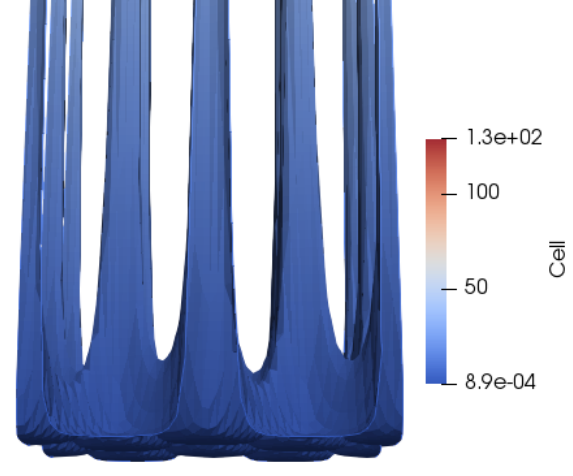}  
		\\[2mm]
		\scriptsize{t=4.4e-4}&	\includegraphics[width=44mm]{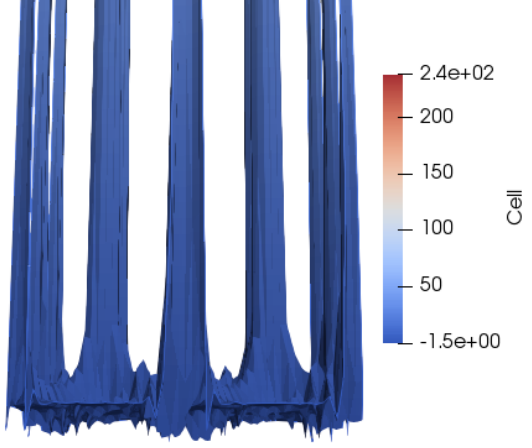} & \includegraphics[width=44mm]{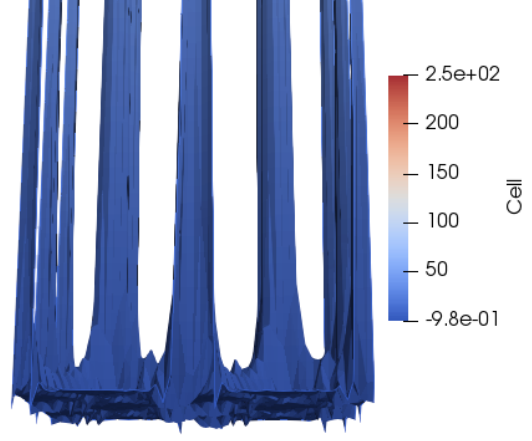} & \includegraphics[width=44mm]{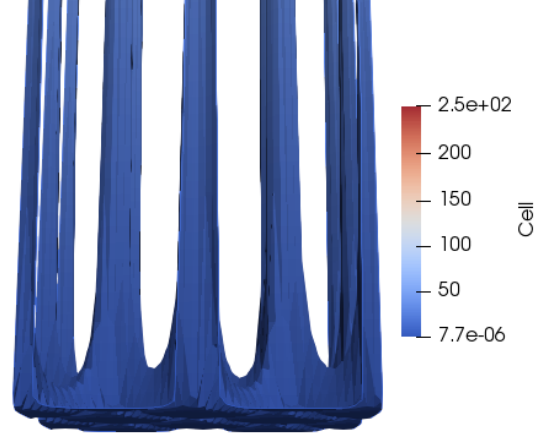} 
		\\[2mm]
		\scriptsize{t=6.9e-4} & \includegraphics[width=44mm]{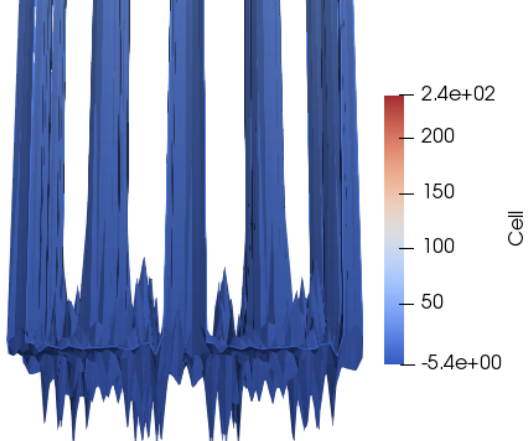} & \includegraphics[width=44mm]{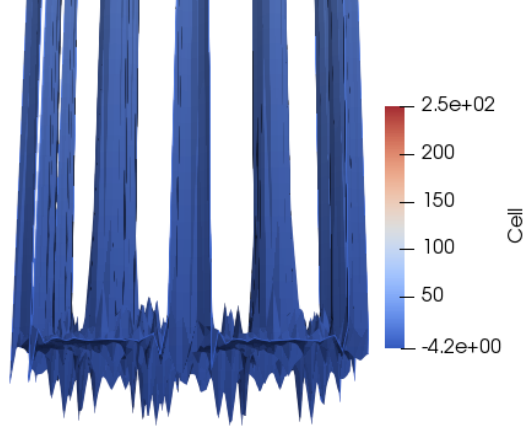} & \includegraphics[width=44mm]{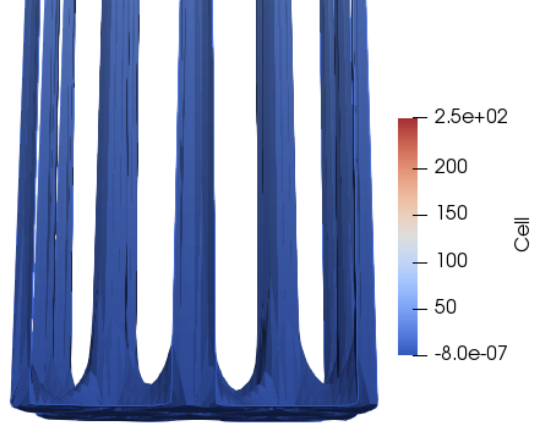}  
		\\[-2mm]
	\end{tabular}
	\figcaption{Positivity vs spurious oscillations of the discrete cell density at different times.} \label{fig:NC1}
\end{minipage}

\subsection{Energy-Stability}
Previously, it was proved that the scheme \textbf{UV} is unconditionally energy-stable with respect to
 the energy $\mathcal{E}(u,v)$ given in (\ref{eneruva}) (in the primitive variables $(u,v)$), while the schemes
\textbf{US} and \textbf{US}$_\varepsilon$ are unconditionally energy-stables with respect to the modified energy $\widetilde{\mathcal{E}}(u,{\boldsymbol \sigma})$ given in (\ref{nueva-2}). In this section, we compare numerically the energy stability of the schemes with respect to the ``exact'' energy $\mathcal{E}(u,v)$ which comes from the continuous problem, and to study the behaviour of the corresponding discrete residual of the energy law (\ref{wsd}):
\begin{equation*}
RE^n:=\delta_t \mathcal{E}(u^n_h,{v}^n_h)+ \Vert \nabla u^n_h\Vert_{0}^{2}
+
\displaystyle\frac{1}{2}\Vert (A_h-I) v^n_h\Vert_{0}^{2} +
\displaystyle\frac{1}{2}\Vert \nabla v^n_h\Vert_{0}^{2}.
\end{equation*}
With this aim, we consider the parameters $k=10^{-4}$, $h=\frac{1}{30}$ and the initial conditions 
$$u_0\!\!=\!\!-10xy(2-x)(2-y)exp(-10(y-1)^2-10(x-1)^2)+10.0001$$
and 
$$v_0\!\!=\!\!20xy(2-x)(2-y)exp(-30(y-1)^2-30(x-1)^2)+0.0001,$$
obtaining that:
\begin{enumerate}
\item[(a)] {
All schemes satisfy the energy decreasing in time property for the energy $\mathcal{E}(u,v)$, that is, $\mathcal{E}(u^n_h,v^n_h)\le \mathcal{E}(u^{n-1}_h,v^{n-1}_h)$ for all $n$, see Figure \ref{fig:EnE}(a).}
\item[(b)]{The schemes \textbf{UV} and \textbf{US} satisfy the discrete energy law $RE^n \leq 0$ for all $n\geq 1$; while the scheme \textbf{US}$_\varepsilon$ evidence positive values for $RE^n$ for some $n\geq 1$, but these values are very close to $0$  (see Figure \ref{fig:EnE})(b).}

\end{enumerate}
\begin{figure}[htbp]
	\centering 
	\subfigure[Energy $\mathcal{E}(u^n_h,{v}^n_h)$]{\includegraphics[width=78mm]{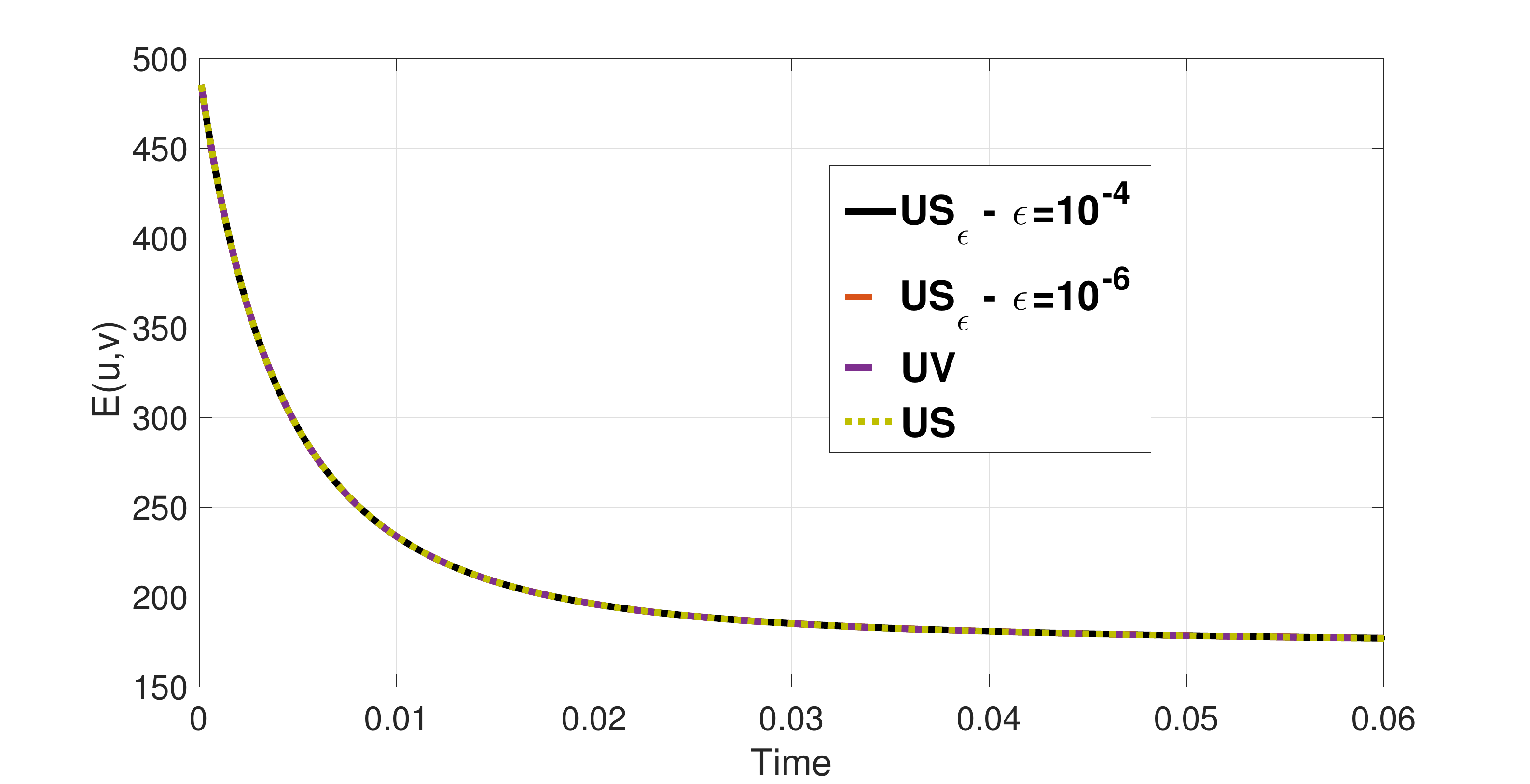}} \hspace{0,1 cm} 
	\subfigure[Discrete residual $RE^n$]{\includegraphics[width=78mm]{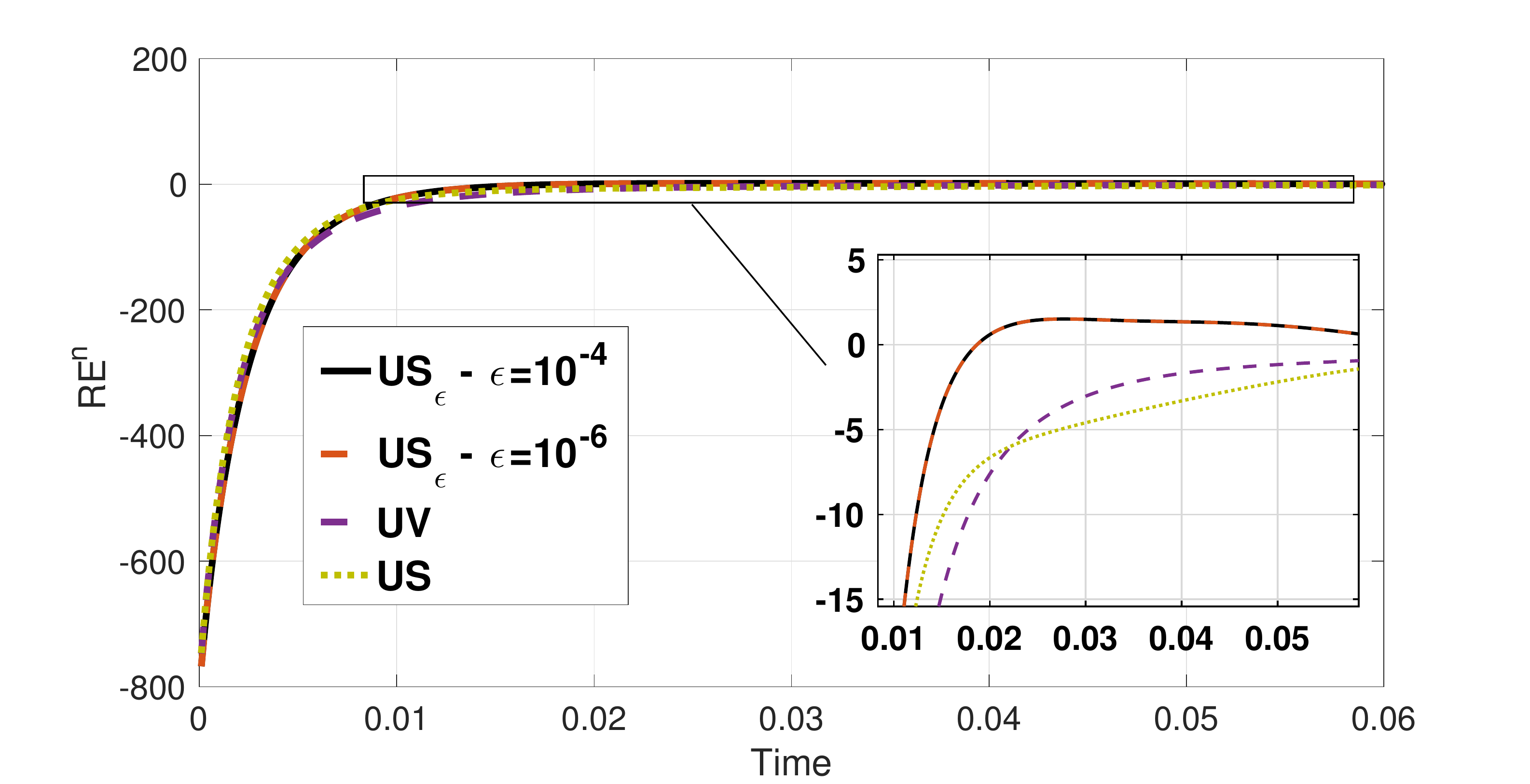}}
	\caption{Energy-stability of the schemes \textbf{UV}, \textbf{US} and \textbf{US}$_\varepsilon$.} \label{fig:EnE}
\end{figure}

\subsection{Asymptotic behaviour}
In this subsection, we present some numerical experiments in order to illustrate the large-time behavior of approximated solutions computed by using the schemes \textbf{UV}, \textbf{US} and \textbf{US}$_\varepsilon$ in two different situations. In the first test, we consider the initial conditions such that the places with the highest chemical concentration have lower cell density; while in the second test, the places with the highest initial chemical concentration have the highest initial cell density. In both situations, we consider $k=10^{-3}$ and $h=\frac{1}{25}$. Moreover, for the scheme \textbf{US}$_\varepsilon$, we consider $\varepsilon=10^{-5}$.

\begin{itemize}
	\item Test 1: We choose the initial conditions (see Figure \ref{fig:initcond3}): 
	$$u^1_0=5cos(2\pi x)cos(2\pi y)+5.0001 \ \ \mbox{and} \ \ 
	v^1_0=-15cos(2 \pi x)cos(2 \pi y))+24.$$
	
		\item Test 2: We choose the initial conditions: 
	$$u^2_0=u_0^1 \ \ \mbox{and} \ \ 
	v^2_0=15cos(2 \pi x)cos(2 \pi y))+24.$$
\end{itemize}
 
\begin{figure}[h]
	\begin{center}
		{\includegraphics[height=0.35\linewidth]{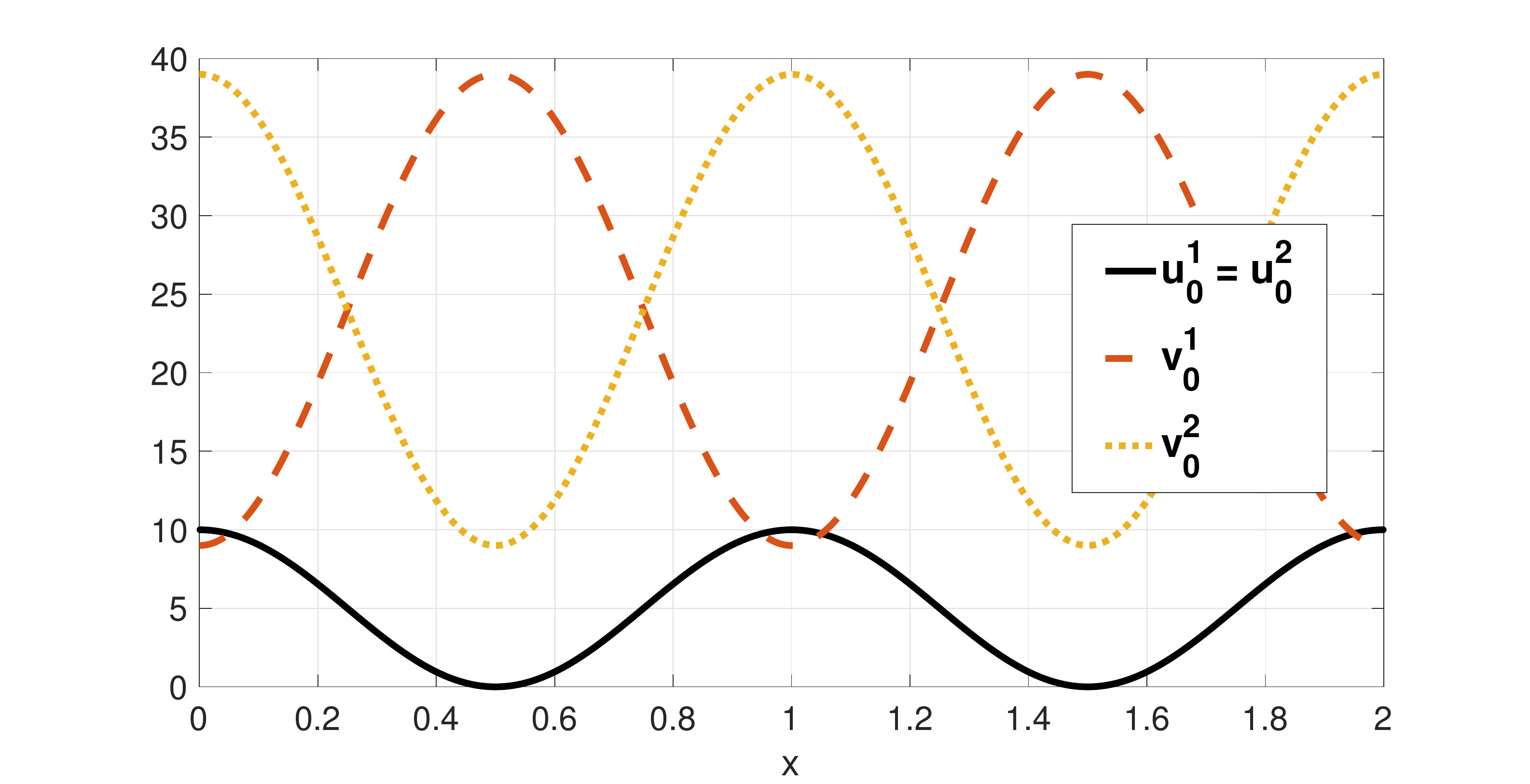}}
		\caption{Cross section at $y=1$ of the initial cell densities $u_0^1=u_0^2$ and chemical concentrations $v_0^1$, $v_0^2$.
			\label{fig:initcond3}}
	\end{center}
\end{figure}

In both cases, we observe that $\Vert (u^n_h - m_0,\nabla v^n_h) \Vert_0^2$ 
decreases to $0$ faster than $\Vert v^n_h - (m_0)^2 \Vert_0^2$. 
 In Figures \ref{fig:T1u}-\ref{fig:T2u} we observe an  exponential decay (at least)  of $(u^n_h,v^n_h)$  to $(m_0,(m_0)^2)$. These facts  are in agreement with the theoretical results proved in this paper.

\begin{figure}[htbp]
	\centering 
{\includegraphics[width=78mm]{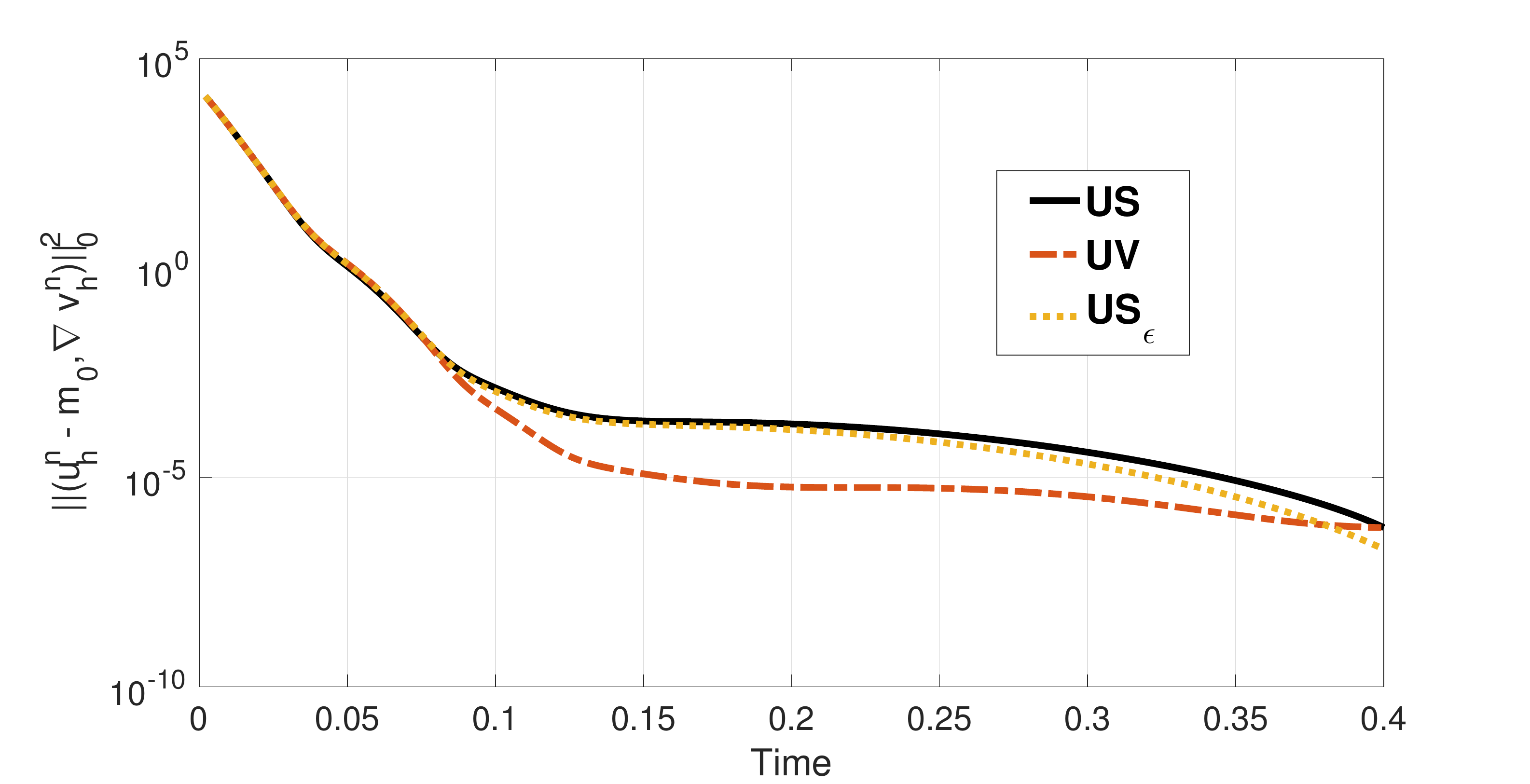}} \hspace{0,1 cm} 
	{\includegraphics[width=78mm]{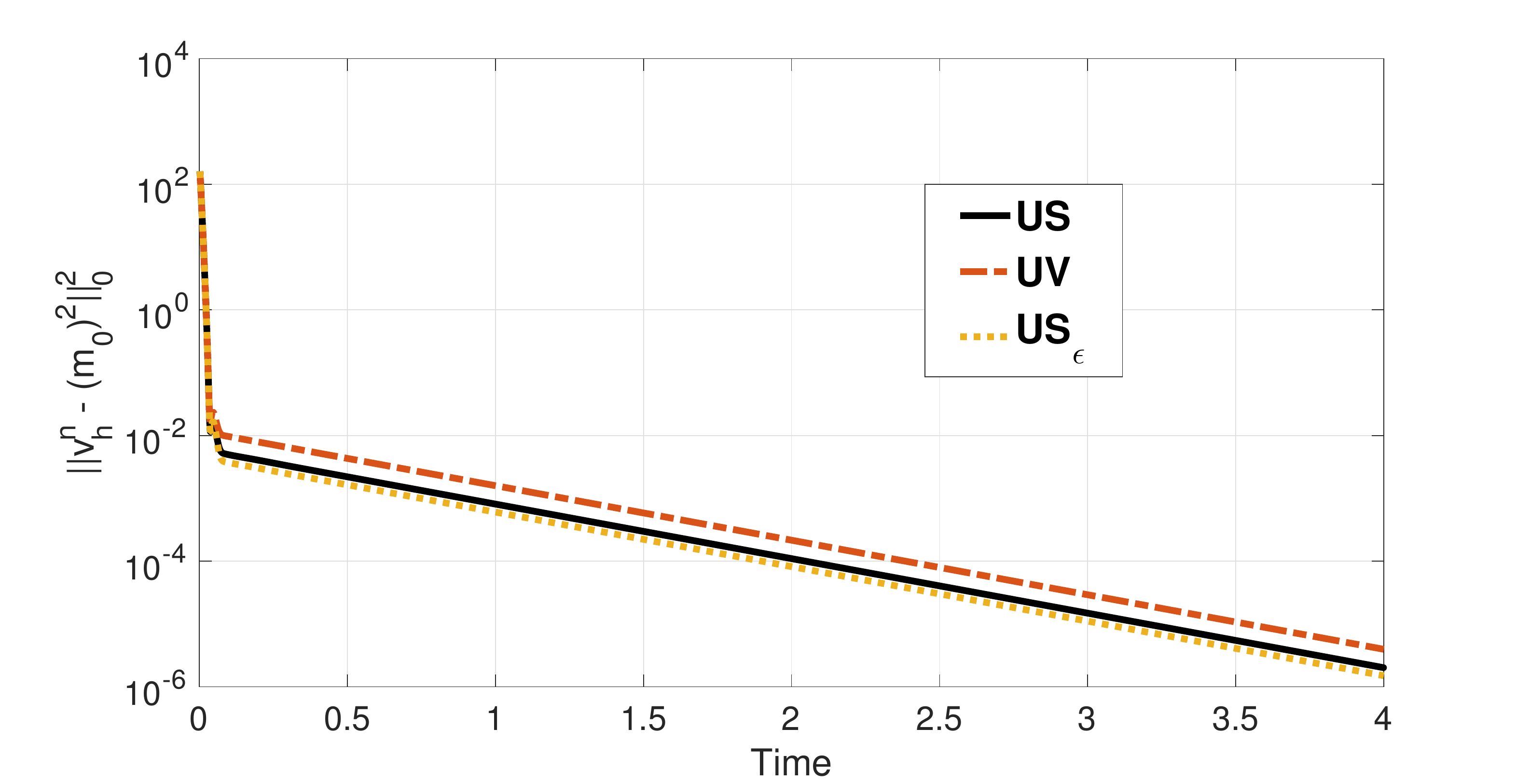}} \hspace{0,1 cm} 
	\caption{Evolution of $\Vert (u^n_h - m_0,\nabla v^n_h) \Vert_0^2$
		and $\Vert v^n_h - (m_0)^2 \Vert_0^2$ 
		 in test 1.} \label{fig:T1u}
\end{figure}

\begin{figure}[htbp]
	\centering 
	{\includegraphics[width=78mm]{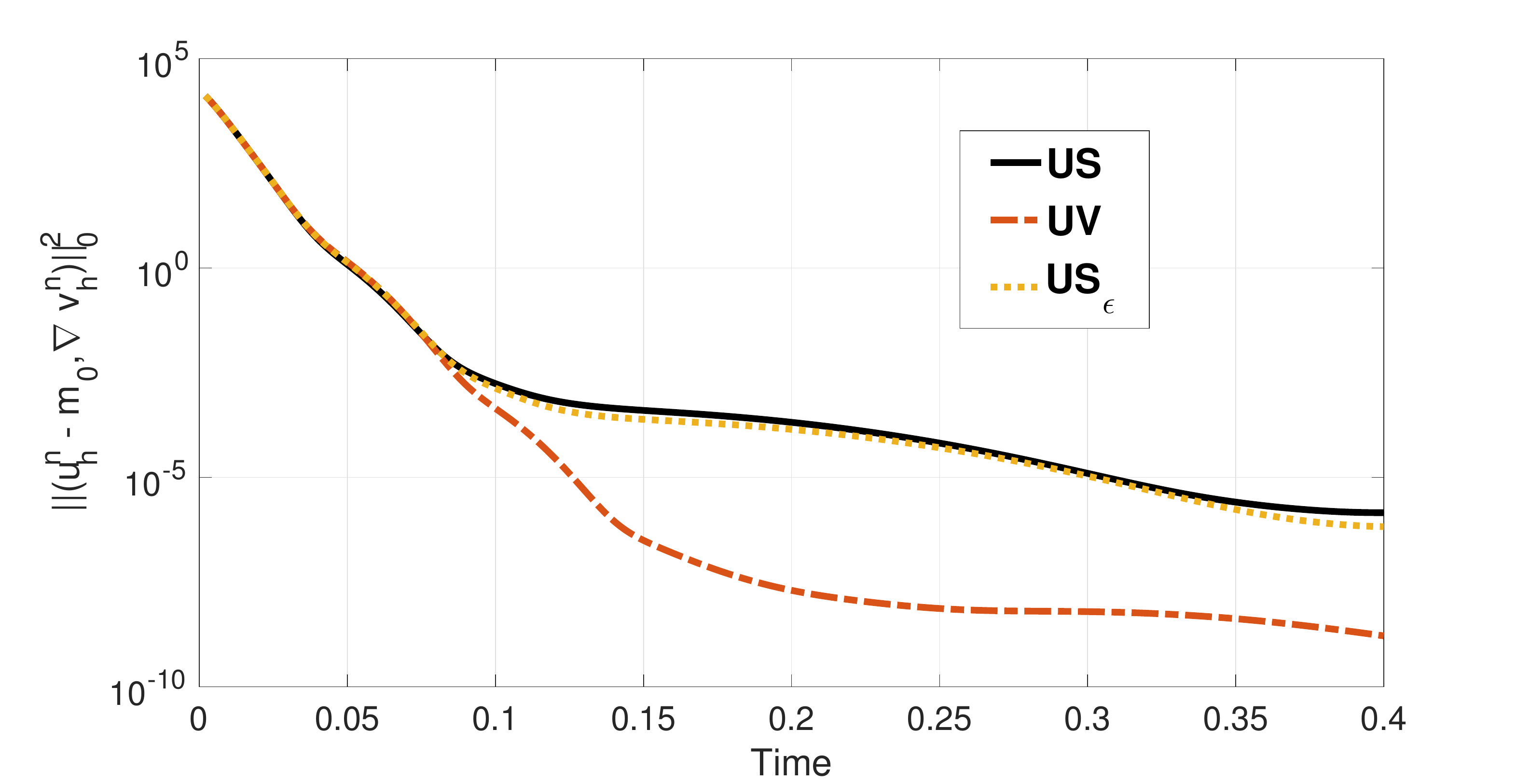}} \hspace{0,1 cm} 
	{\includegraphics[width=78mm]{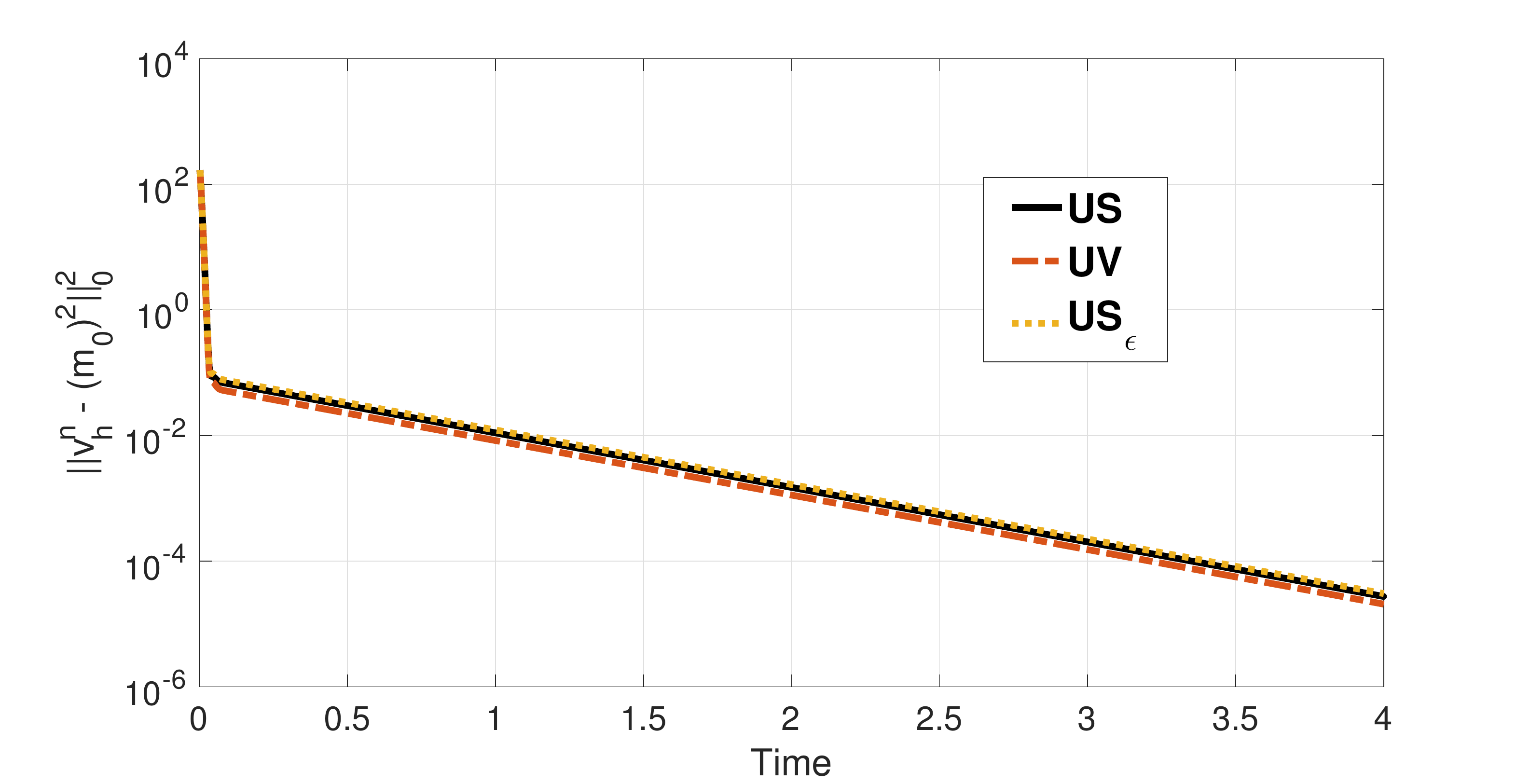}} \hspace{0,1 cm} 
	\caption{Evolution of $\Vert (u^n_h - m_0,\nabla v^n_h) \Vert_0^2$
		and $\Vert v^n_h - (m_0)^2 \Vert_0^2$ 
		in test 2.} \label{fig:T2u}
\end{figure}

\section{Conclusions}\label{Con}
In this paper, we study two fully discrete FE schemes for a repulsive chemotaxis model with quadratic signal production, called \textbf{UV} (the FE backward Euler in variables $(u,v)$) and \textbf{US}$_\varepsilon$ (obtained by mixing the scheme \textbf{US} proposed in\cite{FMD2} with a regularization technique). For these numerical schemes we obtain better properties than proved for the scheme \textbf{US} in \cite{FMD2}. Specifically, the comparison between the numerical schemes \textbf{UV} and \textbf{US}$_\varepsilon$, and the scheme \textbf{US}, allows us to conclude that, from the theoretical point of view:
\begin{enumerate}
	\item\label{one} By imposing the ``compatibility'' condition  $(\mathbb{P}_m,\mathbb{P}_{2m})$-continuous FE (with $m\geq 1$) for $(u,v)$,  the scheme \textbf{UV} is energy-stable (in the primitive variables $(u,v)$). In the case of the schemes \textbf{US} and \textbf{US}$_\varepsilon$, it can be obtained energy-stability but with respect to a modified energy written in terms of $(u,{\boldsymbol{\sigma}})$. 
	\item As a consequence of item \ref{one}, the exponential convergence of the scheme \textbf{UV} to the constant states $m_0$ and $(m_0)^2$ (when the time goes to infinity) can be proved in weak norms for $u$ and strong norms for $v$ (equal than the continuous case); while in the schemes \textbf{US} and \textbf{US}$_\varepsilon$, can be proved also exponential convergence towards $m_0$ and $(m_0)^2$, but only in weak norms for $u$ and $v$.
	\item Aproximate positivity for the discrete solutions is proved for the scheme \textbf{US}$_\varepsilon$, but it is not clear how to prove neither positivity nor approximated positivity for the schemes \textbf{US} and \textbf{UV}.
\end{enumerate}   
From the numerical point of view, we have obtained that:
\begin{enumerate}
	\item The scheme \textbf{US}$_\varepsilon$ evidence ``better positivity'' than the schemes \textbf{UV} and \textbf{US}. Moreover, for the scheme \textbf{US$_\varepsilon$} it was observed numerically that $\underset{\overline{\Omega}\times[0,T]}{\min}\ u^n_{\varepsilon} \rightarrow 0$ as $\varepsilon\rightarrow 0$.
	\item In some cases, for example when negative values are obtained for $u_h$, some spurious oscillations are observed in the schemes \textbf{UV} and \textbf{US}; while in the scheme \textbf{US}$_\varepsilon$, the approximated positivity of $u_h$ favors  the non-appearance of spurious oscillations.
	\item The three schemes have decreasing in time energy $\mathcal{E}(u,v)$.
	\item  It is observed, for the three schemes, an  exponential decay (at least)  of $(u^n_h,v^n_h)$ in weak-strong norm to $(m_0,(m_0)^2)$.
\end{enumerate}

\section*{Acknowledgements}
The authors have been partially supported by MINECO grant MTM2015-69875-P
(Ministerio de Econom\'{\i}a y Competitividad, Spain) with the participation of FEDER. The first and second authors have also been supported by PGC2018-098308-B-I00 (MCI/AEI/FEDER, UE); and  
the third author has also been supported by Vicerrector\'ia de Investigaci\'on y Extensi\'on of Universidad Industrial de Santander.

\end{document}